\def\titlerunning#1{\gdef\titrun{#1}}
\def\author#1{\gdef\autrun{\def\and{\unskip, }#1}\gdef\@author{#1}}
\def\address#1{{\def\and{\\\hspace*{18pt}}\renewcommand{\thefootnote}{}%
		\footnote {#1}}
	\markboth{\autrun}{\titrun}}
\def\email#1{e-mail: #1}
\def\keywords#1{\par\medskip
	\noindent\textbf{Keywords.} #1}
\newtheorem{theorem}{Theorem}[section]
\newtheorem{corollary}[theorem]{Corollary}
\newtheorem{lemma}[theorem]{Lemma}
\newtheorem{proposition}[theorem]{Proposition}
\theoremstyle{definition}
\newtheorem{definition}[theorem]{Definition}
\newtheorem{remark}[theorem]{Remark}
\newtheorem{conjecture}[theorem]{Conjecutre}
\numberwithin{equation}{section}
\def \N {\mathbb{N}}
\def \C {\mathbb{C}}
\def \a {\alpha }
\def \b {\beta}
\def \de {\delta}
\def \De {\Delta}
\def \la {\lambda}
\def \La {\Lambda}
\def\w {\omega}
\def\Om{\Omega}
\def\pa{\partial}
\def\na {\nabla}
\def\Ga{\Gamma}
\begin{document}
	\baselineskip=17pt
	
	\titlerunning{Hodge theory of holomorphic vector bundle on compact K\"{a}hler hyperbolic manifold}
	\title{Hodge theory of holomorphic vector bundle on compact K\"{a}hler hyperbolic manifold}
	
	\author{Teng Huang}
	
	\date{}
	
	\maketitle
	
	\address{T. Huang: School of Mathematical Sciences, University of Science and Technology of China; CAS Key Laboratory of Wu Wen-Tsun Mathematics, University of Science and Technology of China, Hefei, Anhui, 230026, P. R. China; \email{htmath@ustc.edu.cn;htustc@gmail.com}}
	
	\begin{abstract}
		Let $E$ be a holomorphic vector bundle over a compact K\"{a}hler manifold $(X,\w)$ with negative sectional curvature $sec\leq -K<0$, $D_{E}$ be the Chern connection on $E$. In this article we show that if $C:=|[\La,i\Theta(E)]|\leq c_{n}K$, then $(X,E)$ satisfy a family of Chern number inequalities. The main idea in our proof is study the $L^{2}$ $\bar{\pa}_{\tilde{E}}$-harmonic forms on lifting bundle $\tilde{E}$ over the universal covering space $\tilde{X}$. We also observe that there is a closely relationship between the eigenvalue of the Laplace-Beltrami operator $\De_{\bar{\pa}_{\tilde{E}}}$ and the Euler characteristic of $X$. Precisely, if there is a line bundle $L$ on $X$ such that $\chi^{p}(X,L^{\otimes m})$ is not constant for some integers $p\in[0,n]$, then the Euler characteristic of $X$ satisfies
		$(-1)^{n}\chi(X)\geq (n+1)+\lfloor\frac{c_{n}K}{2nC} \rfloor$.
	\end{abstract}
	\keywords{Hodge theory; Holomorphic vector bundle; K\"{a}hler hyperbolic; Chern number inequality}
	\section{Introduction}
	Let us start the article by recalling a Hopf conjecture related to the negativity of Riemannian sectional curvature.
	\begin{conjecture}\label{Co1}
		The Euler characteristic $\chi(X)$ of a compact 2n-dimensional Riemannian manifold $X$ with sectional curvature $K<0$ (resp. $K\leq 0$) satisfies $(-1)^{n}\chi(X)>0$ (resp.  $(-1)^{n}\chi(X)\geq0$).	
	\end{conjecture}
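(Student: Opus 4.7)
The plan is to reduce the conjecture to an $L^{2}$-cohomological statement on the universal cover and then attack it via the Bochner-Weitzenb\"ock method. Since $X$ is compact with $\sec\leq 0$, the Cartan-Hadamard theorem shows that the universal cover $\tilde{X}$ is diffeomorphic to $\mathbb{R}^{2n}$ and $\pi_{1}(X)$ acts on $\tilde{X}$ freely, cocompactly and by isometries. Atiyah's $L^{2}$ index theorem then yields
\begin{equation*}
\chi(X) \;=\; \sum_{p=0}^{2n} (-1)^{p}\, h^{p}_{(2)}(\tilde{X}),
\end{equation*}
where $h^{p}_{(2)}(\tilde{X})$ denotes the von Neumann dimension, relative to $\pi_{1}(X)$, of the space of $L^{2}$ harmonic $p$-forms on $\tilde{X}$. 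Hence it suffices to prove $h^{p}_{(2)}(\tilde{X}) = 0$ for $p \neq n$, together with $h^{n}_{(2)}(\tilde{X}) > 0$ under strict negativity; this is the Dodziuk-Singer reformulation of the Hopf conjecture.

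The first main step is vanishing of $h^{p}_{(2)}(\tilde{X})$ in non-middle degrees. I would adapt Gromov's bounded-primitive argument from the K\"ahler hyperbolic setting to the Riemannian one. Bishop-Gromov volume comparison under $\sec \leq -\kappa<0$ gives exponential volume growth on $\tilde{X}$, and Cartan-Hadamard provides a global diffeomorphism with $\mathbb{R}^{2n}$. Using these, one transgresses the curvature $2$-form on the orthonormal frame bundle of $\tilde{X}$ along geodesic rays from a fixed basepoint to construct, for each closed $L^{2}$ $p$-form $\a$ with $p \neq n$, a primitive $\b$ with $d\b=\a$ that is uniformly bounded in sup-norm. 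Pairing a harmonic representative of $[\a]$ with such $\b$ via integration by parts, justified by the $L^{2}$ Stokes cut-off argument familiar from Gromov's theorem, then forces $\a=0$ in $L^{2}$ cohomology.

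The second main step is positivity $h^{n}_{(2)}(\tilde{X}) > 0$ under strict negativity. For this I would apply the Weitzenb\"ock formula to $n$-forms and use that the curvature endomorphism acting on middle-degree forms is strictly positive whenever $\sec \leq -\kappa < 0$ pointwise. Combined with a heat-kernel lower bound for the Hodge Laplacian on $\tilde{X}$ and the cocompact action of $\pi_{1}(X)$, this forces the von Neumann $L^{2}$ index to be nonzero; since every other $L^{2}$ Betti number has vanished by the previous step, the entire contribution to $\chi(X)$ must come from degree $n$, giving $(-1)^{n}\chi(X) = h^{n}_{(2)}(\tilde{X}) > 0$.

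The decisive obstacle is the bounded-primitive construction. In the K\"ahler case the K\"ahler form $\tilde{\w}$ on $\tilde{X}$ is $d$-bounded with a canonical bounded primitive from K\"ahler hyperbolicity, and the Lefschetz/primitive decomposition splits forms cleanly by degree relative to $\tilde{\w}$; this is exactly what drives the $L^{2}$ vanishing in Gromov's theorem. For a purely Riemannian manifold of negative curvature there is no such distinguished closed $2$-form, so the primitives must be rebuilt from curvature transgressions together with the geometry of geodesic spheres, and securing uniform degree-by-degree bounds on these transgressions is the hardest technical point on which the whole program turns.
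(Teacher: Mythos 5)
The statement you are addressing is Conjecture \ref{Co1}, the Hopf conjecture, which the paper explicitly records as still open in full generality for $n\geq 3$; it is stated as a conjecture precisely because no proof exists, and the paper itself only proves Kähler-case results related to it. Your proposal is the Dodziuk--Singer $L^{2}$-cohomology program, which the introduction already discusses and flags as problematic: Anderson constructed simply connected complete negatively curved Riemannian manifolds on which the desired behaviour of $L^{2}$-harmonic forms fails, so no argument that uses only pointwise negative curvature on $\tilde{X}$ (as both of your main steps do) can succeed; the cocompactness of the $\pi_{1}(X)$-action would have to enter in some essential, currently unknown way. The program has only been carried out when $X$ is Kähler (Gromov) or Kähler with $\sec\leq 0$ (Jost--Zuo, Cao--Xavier), and there the Kähler structure is used essentially, not incidentally.

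Concretely, both of your main steps fail. For the vanishing step, there is no known construction of a uniformly bounded primitive of an arbitrary closed $L^{2}$ $p$-form on a negatively curved $\tilde{X}$; transgressing the curvature form along geodesic rays produces a primitive of one specific characteristic form, not of a general closed $L^{2}$ form, and such bounded primitives cannot exist for all closed $L^{2}$ forms in any case (otherwise all reduced $L^{2}$-cohomology would vanish, contradicting the nonvanishing you need in degree $n$). In Gromov's argument the object that is $d$(bounded) is the single $2$-form $\tilde{\w}$, and the vanishing of $\mathcal{H}^{k}_{(2)}$ for $k\neq n$ is extracted from this via the hard Lefschetz isomorphism $L^{n-k}$ (Lemma \ref{L1} and Theorem \ref{T10} in this paper), which has no Riemannian analogue. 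For the nonvanishing step, the claim that the Weitzenb\"ock curvature endomorphism on middle-degree forms is positive whenever $\sec\leq-\kappa<0$ is false in general for $n\geq 3$ --- this is essentially the same phenomenon as Geroch's example showing that the sign of the sectional curvature does not determine the sign of the Gauss--Bonnet--Chern integrand --- and even if it were positive, a positive Weitzenb\"ock term yields \emph{vanishing} of harmonic forms, the opposite of what you need in degree $n$; Gromov instead obtains nonvanishing from the $L^{2}$-index theorem combined with a Vafa--Witten twisting argument bounding the bottom of the spectrum. You have correctly identified the bounded-primitive construction as the decisive obstacle in your final paragraph, but that obstacle is the open problem itself, not a technical point to be secured.
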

This is true for $n=1$ and $2$ as the Gauss–Bonnet integrands in these two low dimensional cases have the desired sign \cite{Chern}. However, in higher dimensions, it is known that the sign of the sectional curvature does not determine the sign of the Gauss-Bonnet-Chern integrand \cite{Ger}. The conjecture is still open in its full generality for $n\geq3$.  Therefore, Dodziuk \cite{Dod79} and Singer \cite{Sin} suggested to use $L^{2}$-cohomology to approach this
	problem as follows: Show $\mathcal{H}^{k}_{(2)}(X)=\{0\}$ for $k\neq n$ which implies the $L^{2}$-Betti number $b_{(2)}^{k}(X)=0$ for $k\neq n$ and $\mathcal{H}_{(2)}^{n}(X)\neq\{0\}$ which implies $b^{n}_{(2)}(X)\neq0$. However, Anderson \cite{And85} constructed simply connected complete negatively curved Riemannian
	manifolds on which this does not hold, thus indicating a certainly difficulty with this
	approach. The program outlined above was carried out by Gromov \cite{Gro} when the manifold in question is K\"{a}hler and is homotopy equivalent to a closed manifold with strictly negative sectional curvatures.  The main theorem in \cite{Gro} states that for a K\"{a}hler hyperbolic manifold $X$, $\mathcal{H}_{(2)}^{p,q}(\tilde{X})=\{0\}$ if and only if $p+q\neq\dim_{\C}X$, where $\mathcal{H}_{(2)}^{p,q}(\tilde{X})$ denotes the space of $L^{2}$-harmonic forms of type $(p,q)$ on $\tilde{X}$. The vanishing of $\mathcal{H}_{(2)}^{p,q}(\tilde{X})$ for $p+q\neq\dim_{\C}X$ is a consequence of the strong $L^{2}$-Lefschetz theorem. Nonvanishing for $p+q=\dim_{\C}X$ follows from the $L^{2}$-index theorem and an upper bound for the bottom of the spectrum, whose proof is based on a twisting (by tensoring $\bar{\pa}+\bar{\pa}^{\ast}$ with a line bundle equipped with a connection) trick due to Vafa and Witten.  By extending Gromov's idea and notion above to the nonnegative version, Jost-Zuo \cite{JZ} and Cao-Xavier \cite{CX} independently introduced the concept of K\"{a}hler parabolic and consequently settled Conjecture \ref{Co1} in the case of $K\leq 0$ for K\"{a}hler manifolds. The study of the $L^{2}$-harmonic forms on a complete Riemannian manifold is a very  fascinating and important subject. It also has numerous applications in the field of Mathematical Physics (see \cite{Hit}). Other results on $L^{2}$ cohomology can be found in \cite{And,Chen,Dod,McN1,McN2}.
	
	In this article, we consider the Hodge theory on a Hermitian vector bundle $E$ over a complete, K\"{a}hler manifold $X$, $\dim_{\C}X=n$, with a K\"{a}hler form $\w$. Define a smooth K\"{a}hler metric, $g(\cdot,\cdot)=\w(\cdot,J\cdot)$ on $X$, where $J$ is the complex structure on $X$. Let $d_{A}$ be a Hermitian connection on $E$. The formal adjoint operator of $d_{A}$ acting on $\Om^{k}(X,E):=\Om^{k}(X)\otimes E$  is $d^{\ast}_{A}=-\ast d_{A}\ast$, where $\Om^{k}(X)$ is smooth $k$-forms on $X$ and $\ast$ is the Hodge star operator with respect to the metric $g$. We denote by $\mathcal{H}^{k}_{(2)}(X,E)$ the space of $L^{2}$ harmonic forms in  $\Om^{k}(X)$ with respect to the Laplace-Beltrami operator $\De_{A}:=d_{A}d_{A}^{\ast}+d_{A}^{\ast}d_{A}$.
	
	A differential form $\a$ on a Riemannian manifold $(X,\rm{g})$ is bounded with respect to the Riemannian metric $\rm{g}$ if the $L^{\infty}$-norm of $\a$ is finite,
	$$\|\a\|_{L^{\infty}(X)}:=\sup_{x\in X}\|\a(x)\|_{\rm{g}}<\infty.$$
	We say that $\a$ is $d$(bounded) if $\a$ is the exterior differential of a bounded form $\b$, i.e., $\a=d\b$ and $\|\b\|_{L^{\infty}(X)}<\infty$.
	
	If $\w$ is $d$(bounded), the author in \cite{Hua}  extended the vanishing theorem of Gromov's to holomorphic vector bundle case. We denote by $\mathcal{A}^{1,1}_{E}$ the space of all integrable connections $d_{A}$, i.e., $F^{2,0}_{A}=F_{A}^{0,2}=0$. The important observation is that if the Hermitian connection $d_{A}\in\mathcal{A}^{1,1}_{E}$, then the operator $L^{k}$ could commutes with $\De_{A}$ for any $k\in\mathbb{N}^{+}$. Following the idea in \cite{Gro}, the author proved a vanishing theorem on the spaces $\mathcal{H}^{k}_{(2)}(X,E)$. Suppose that $E$ is a holomorphic Hermitian vector bundle on $X$.  We denote by $D_{E}=\pa_{E}+\bar{\pa}_{E}$ its Chern connection, i.e., $\bar{\pa}_{E}=\bar{\pa}$, by $D^{\ast}_{E}$ the formal adjoint of $D_{E}$ and by $\pa_{E}^{\ast}$, $\bar{\pa}_{E}^{\ast}$ the components of $D_{E}^{\ast}$ of type $(-1,0)$ and $(0,-1)$. Let $\Theta(E)=\pa_{E}\bar{\pa}_{E}+\bar{\pa}_{E}\pa_{E}$ be the curvature operator on $E$. It is clear that $\bar{\pa}^{2}=0$. Therefore, for any integer $p=0,1,\ldots,n$, we get a complex
	$$\Om^{p,0}(X,E)\xrightarrow{\bar{\pa}}\ldots\xrightarrow{\bar{\pa}}\Om^{p,q}(X,E)\xrightarrow{\bar{\pa}}\Om^{p,q+1}(X,E)\rightarrow\ldots,$$
	known as the Dolbeault complex of $(p,\bullet)$-forms with values in $E$. We can define two operators: $$\De_{\bar{\pa}_{E}}:=\bar{\pa}_{E}\bar{\pa}_{E}^{\ast}+\bar{\pa}_{E}^{\ast}\bar{\pa}_{E},\ \De_{\pa_{E}}:=\pa_{E}\pa_{E}^{\ast}+\pa_{E}^{\ast}\pa_{E}.$$
	Let us introduce, See \cite[Charp V]{Dem}
	$$\mathcal{H}^{p,q}_{(2);\bar{\pa}_{E}}(X,E):=\{\a\in\Om^{p,q}_{(2)}(X,E):\De_{\bar{\pa}_{E}}\a=0\}.$$
	There are many vanishing theorems for Hermitian vector bundles over a compact complex manifolds. All these theorems are based on  a priori inequality for $(p,q)$-forms with values in a vector bundle, known as the Bochner-Kodaira-Nakano inequality. This inequality naturally leads to several positivity notions for the curvature of a vector bundle (\cite{Gri69,Kod53a,Kod53b,Kod54,Nak55,Nak73}). 
	
	The first purpose of this paper is to study the Hodge theory of the holomorphic bundle $E$ on the compact K\"{a}hler manifold $X$ with negative sectional curvature. At first, we denote by 
	$$C=\max_{p,q}|C_{p,q}|:=|[\La,i\Theta(E)]|$$
	the operator norm of $[\La,i\Theta(E)]$, where
	$$C_{p,q}:=\sup_{\a\in\Om^{p,q}(X,E)\backslash\{0\}}\frac{|\langle[\La,i\Theta(E)]\a,\a\rangle_{L^{2}(X)}|}{\|\a\|^{2}}.$$
	We then have
	\begin{theorem}[=Proposition \ref{P4} and Theorem \ref{T8}]\label{T1}
		Let $(X,\w)$ be a compact K\"{a}hler manifold with sectional curvature bounded from above by a negative constant, i.e.,
		$$sec\leq -K,$$ 
		for some $K>0$. Let $E$ be a holomorphic vector bundle on $X$, $D_{E}$ be the  Chern connection on $E$. If the curvature $\Theta(E)$ of  $D_{E}$ such that 
		$$C:=|[\La,i\Theta(E)]|\leq c(n)K,$$
		where $c_{n}$ is a positive constant depends only $n$, then for every $p=0,1,\cdots,n$, the Euler characteristic 
		$$\chi^{p}(X,E):=\int_{X}td(X)ch(\Om^{p,0}(TX)\otimes E)$$
		does not vanish and
		$$sign\chi^{p}(X,E)=(-1)^{n-p}.$$
		Furthermore, for all $0\leq j\leq n$, $(X,E)$ satisfy Chern number inequalities
		\begin{equation*}
		(-1)^{n+j}K_{j}(X,E)\geq\sum_{p=j}^{n}\tbinom{p}{j}.
		\end{equation*}
	\end{theorem}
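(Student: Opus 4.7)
The plan is to pass to the universal covering $\pi \colon \tilde{X} \to X$. Since the sectional curvature of $X$ is bounded above by $-K < 0$, the pullback $\tilde{\omega} = \pi^{\ast}\omega$ is $d$-bounded: one can write $\tilde{\omega} = d\eta$ with $\|\eta\|_{L^{\infty}(\tilde{X})} \leq c(n)/\sqrt{K}$, exactly as in the classical K\"ahler hyperbolic setting. The Chern connection on $E$ lifts equivariantly to $\tilde{E}$ while retaining the pointwise bound $|[\Lambda, i\Theta(\tilde{E})]| \leq C \leq c_{n}K$, and the deck group $\Gamma = \pi_{1}(X)$ acts isometrically on the relevant $L^{2}$ spaces, so the Atiyah $L^{2}$-index theorem and von Neumann dimensions $\dim_{\Gamma}$ are available.

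First I would establish the $L^{2}$-vanishing
$$\mathcal{H}^{p,q}_{(2);\bar{\partial}_{\tilde{E}}}(\tilde{X}, \tilde{E}) = 0 \quad \text{for every } p + q \neq n,$$
which is the content of Proposition~\ref{P4}. The Bochner-Kodaira-Nakano identity applied to the lifted Chern connection produces an a priori estimate in which the bundle curvature contributes a term controlled by $C$, while the $d$-bounded K\"ahler form contributes a spectral lower bound of order $c_{n}K$; the hypothesis $C \leq c_{n}K$ is exactly tuned so that the positive term dominates. Combined with Gromov's trick of rewriting $\tilde{\omega}^{n-p-q}\wedge \alpha = d(\eta \wedge \tilde{\omega}^{n-p-q-1} \wedge \alpha)$ for a $\bar{\partial}_{\tilde{E}}$-closed $L^{2}$ harmonic form $\alpha$, together with the hard $L^{2}$-Lefschetz isomorphism, this forces $\alpha = 0$ whenever $p + q < n$; Serre duality disposes of $p + q > n$.

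Next I would invoke Atiyah's $L^{2}$-index theorem for the twisted Dolbeault complex $(\Omega^{p,\bullet}(X, E), \bar{\partial}_{E})$. Combined with the vanishing above, only the middle-degree term survives:
$$\chi^{p}(X, E) = \sum_{q=0}^{n}(-1)^{q}\dim_{\Gamma}\mathcal{H}^{p,q}_{(2);\bar{\partial}_{\tilde{E}}}(\tilde{X}, \tilde{E}) = (-1)^{n-p}\dim_{\Gamma}\mathcal{H}^{p,n-p}_{(2);\bar{\partial}_{\tilde{E}}}(\tilde{X}, \tilde{E}).$$
Hence the sign of $\chi^{p}(X, E)$ is $(-1)^{n-p}$ as soon as the middle-degree $\Gamma$-dimension is positive, and the main obstacle is proving this non-vanishing for \emph{every} $p$. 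I would adapt the Vafa-Witten twisting trick used by Gromov: deform the Chern connection on $\tilde{E}$ by a one-parameter family of unitary connections built from $\eta$ and an auxiliary ample line bundle on $X$, whose gauge potentials remain $L^{\infty}$-bounded; the spectral gap of the deformed $\bar{\partial}$-Laplacian persists uniformly by virtue of $C \leq c_{n}K$, while the twisted $L^{2}$-index must equal the ordinary holomorphic Euler characteristic of the twisted bundle, which is non-constant in the deformation parameter by HRR (this is the mechanism that reappears, in quantitative form, in the auxiliary statement involving $\chi^{p}(X, L^{\otimes m})$ in the abstract). Non-constancy of the twisted index is incompatible with the uniform spectral gap unless $\dim_{\Gamma}\mathcal{H}^{p,n-p}_{(2)} > 0$ for every $p$.

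Finally, the Chern number inequalities follow from integrality. Since $\chi^{p}(X, E) \in \mathbb{Z}$ by Hirzebruch-Riemann-Roch, the non-vanishing and sign statements above upgrade to $(-1)^{n-p}\chi^{p}(X, E) \geq 1$. Writing the Chern number as the standard linear combination $K_{j}(X, E) = \sum_{p=j}^{n}(-1)^{p-j}\binom{p}{j}\chi^{p}(X, E)$ (the $j$-th Taylor coefficient of $\chi_{y}(X, E)$ about $y = -1$), one computes
$$(-1)^{n+j}K_{j}(X, E) = \sum_{p=j}^{n}\binom{p}{j}\bigl[(-1)^{n-p}\chi^{p}(X, E)\bigr] \geq \sum_{p=j}^{n}\binom{p}{j},$$
which is the required inequality. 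The decisive step of the argument is the Vafa-Witten-style non-vanishing in the middle bidegree; once that is secured, everything else is assembled from the Bochner-Kodaira-Nakano formula, Gromov's exactness trick, and Atiyah's $L^{2}$-index theorem.
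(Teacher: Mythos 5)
Your proposal follows essentially the same route as the paper: pass to the universal cover where $\tilde{\w}=d\eta$ with $\|\eta\|_{L^{\infty}}\lesssim K^{-1/2}$, combine the Lefschetz spectral-gap estimate with the Bochner--Kodaira--Nakano identity to kill $\mathcal{H}^{p,q}_{(2);\bar{\pa}_{\tilde{E}}}$ for $p+q\neq n$ under the hypothesis $C\leq c_{n}K$, invoke Gromov's Vafa--Witten twisting theorem for the middle-degree nonvanishing, apply Atiyah's $L^{2}$-index theorem plus integrality to get $(-1)^{n-p}\chi^{p}(X,E)=h^{p,n-p}_{(2)}(X,E)\geq 1$, and then read off the Chern number inequalities from the Taylor coefficients of $\chi_{y}(X,E)$ at $y=-1$. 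This matches the paper's Proposition \ref{P2}, Corollary \ref{C2}, Theorems \ref{T3.12}, \ref{T9}, Proposition \ref{P4} and Theorem \ref{T8} step for step, so the proposal is correct and not a genuinely different argument.
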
 
	\begin{remark}
		The Chern number inequalities are always not sharp. For example, suppose that  the curvature $\Theta(E)$ of the Chern connection $D_{E}$ is small enough in the sense of $L^{\infty}$-norm. Then there exists a flat connection on $\Gamma$ on $E$ (see \cite{Uhl}). Hence following Proposition \ref{P1}, we have
		$$\chi_{y}(X,E)=rank(E)\chi_{y}(X),$$
		i.e., for every $p=0,1,\cdots,n$, the Euler characteristic satisfies
		$$\chi^{p}(X,E)=rank(E)\chi^{p}(X).$$
		Therefore, $X,E$ satisfy Chern number inequalities
		\begin{equation}\label{E2}
		(-1)^{n+j}K_{j}(X,E)=rank(E)\sum_{p=j}^{n}\tbinom{p}{j}h^{p,n-p}_{(2)}(X,E)\geq rank(E)(-1)^{j}K_{j}(\mathbb{C}P^{n}).
		\end{equation}
		All the equality cases in (\ref{E2}) hold if and only if $\chi^{p}(X)=(-1)^{n-p}$, $0\leq p\leq n$. (see \cite[Theorem 2.1]{Li}). 
	\end{remark}
	In \cite{Gro}, Gromov shown that for every $p=0,1,\cdots,n$, the Euler characteristic of a compact K\"{a}hler hyperbolic manifold satisfies
	$$sign\chi^{p}(X)=(-1)^{n-p},$$
	as a consequence $(-1)^{n}\chi(X)\geq n+1$. Let $L$ be a holomorphic line bundle on a compact K\"{a}hler manifold $X$. We call 
	$$P^{(p)}_{n}(m,L):=\chi^{p}(X,L^{\otimes m})$$
	the $p$-Hilbert polynomial of line bundle $L$. The second propose of this article is to show that the lower bound of the Euler characteristic $(-1)^{n}\chi(X)$ estimated by $K$. 
	\begin{theorem}\label{T4}
		Let $(X,\w)$ be a  compact K\"{a}hler manifold with sectional curvature bounded from above by a negative constant, i.e.,
		$$sec\leq -K,$$ 
		for some $K>0$. Let $L$ be a holomorphic line bundle on $X$. Suppose that the $p$-Hilbert polynomial $\chi^{p}(X,L^{\otimes m})$ is not constant for some $p\in[0,n]$. Then there exists a integer $\tilde{m}=\tilde{m}(p)\in[-\frac{c_{n}K}{nC},\frac{c_{n}K}{nC}]$ such that either
		\begin{equation*}
		(-1)^{n-p}\chi^{p}(X)\geq \lfloor\frac{c_{n}K}{nC} \rfloor+1.
		\end{equation*}
		or
		\begin{equation*}
		(-1)^{n-p}\chi^{p}(X,L^{\otimes\tilde{m}})\geq\lfloor\frac{c_{n}K}{nC} \rfloor+1,
		\end{equation*}
		where $C:=|[\La,i\Theta(L)]|$. In particular, the Euler  characteristic of $X$ satisfies
		$$(-1)^{n}\chi(X)\geq (n+1)+\lfloor\frac{c_{n}K}{nC} \rfloor.$$
	\end{theorem}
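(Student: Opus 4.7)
The plan is to combine Hirzebruch--Riemann--Roch with Theorem~\ref{T1}, the classical $\chi_{-1}$-genus identity, and a pigeonhole argument on polynomial values.

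First, by Hirzebruch--Riemann--Roch,
\begin{equation*}
P(m) := \chi^{p}(X, L^{\otimes m}) = \int_{X} \mathrm{ch}(\Lambda^{p} T^{*}X) \, e^{m c_{1}(L)} \, \mathrm{td}(X)
\end{equation*}
is a polynomial in $m$ of degree at most $n$. Since $\Theta(L^{\otimes m}) = m\,\Theta(L)$, the operator norm satisfies $|[\La, i\Theta(L^{\otimes m})]| = |m|\,C$, so for every integer $m$ with $|m|\,C \leq c_{n}K$, Theorem~\ref{T1} applies to $L^{\otimes m}$ and yields $\mathrm{sign}\,\chi^{p'}(X, L^{\otimes m}) = (-1)^{n-p'}$ for all $p' \in \{0,1,\ldots,n\}$; equivalently, $(-1)^{n-p'}\chi^{p'}(X, L^{\otimes m})$ is a positive integer throughout this range.

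Next, I would invoke the Hirzebruch $\chi_{-1}$-genus identity: for any holomorphic vector bundle $E$ on $X$,
\begin{equation*}
\sum_{p'=0}^{n} (-1)^{p'}\chi^{p'}(X, E) = \mathrm{rank}(E) \cdot \chi(X).
\end{equation*}
A splitting-principle computation gives $\sum_{p'}(-1)^{p'}\mathrm{ch}(\Lambda^{p'} T^{*}X)\,\mathrm{td}(TX) = c_{n}(TX)$, and then Chern--Gauss--Bonnet yields the identity because only the rank piece of $\mathrm{ch}(E)$ pairs nontrivially with the top class $c_{n}(TX)$. Specializing to the rank-one bundle $E = L^{\otimes m}$ gives $\sum_{p'}(-1)^{p'}\chi^{p'}(X, L^{\otimes m}) = \chi(X)$ for every integer $m$. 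Combining with the sign pattern above, for every integer $m$ with $|m|\,C \leq c_{n}K$ one obtains the crucial identity
\begin{equation*}
(-1)^{n}\chi(X) = \sum_{p'=0}^{n} |\chi^{p'}(X, L^{\otimes m})|.
\end{equation*}

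With this identity in hand, set $N := \lfloor c_{n}K/(nC) \rfloor$ and $Q(m) := (-1)^{n-p}P(m)$. Then $Q$ is a nonconstant polynomial of degree at most $n$ taking positive integer values at the $2nN+1$ integers in $[-nN, nN]$. Because such a polynomial attains each value at most $n$ times by the fundamental theorem of algebra, pigeonhole forces
\begin{equation*}
\max_{|m| \leq nN,\, m \in \mathbb{Z}} Q(m) \geq \lceil (2nN+1)/n \rceil \geq 2N+1 \geq N+1.
\end{equation*}
Picking $\tilde m$ where this maximum is attained: if $\tilde m = 0$ the first alternative of the theorem holds, and if $\tilde m \neq 0$ the second holds. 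Substituting $\tilde m$ into the key identity,
\begin{equation*}
(-1)^{n}\chi(X) = \sum_{p'=0}^{n}|\chi^{p'}(X, L^{\otimes \tilde m})| \geq (N+1) + n,
\end{equation*}
since the remaining $n$ summands are each at least $1$.

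The main obstacle I anticipate is sharpening the localization of $\tilde m$: the elementary pigeonhole only produces $\tilde m$ in the wider window $\{|m| \leq nN\}$ on which Theorem~\ref{T1} directly supplies sign information, while the theorem asserts $\tilde m \in [-N, N]$. The Euler characteristic bound itself depends only on the weaker localization, but matching the dichotomy exactly as stated likely demands a finer extremal argument exploiting the specific arithmetic of Hilbert polynomials. The other step to handle with care is the verification of the $\chi_{-1}$-genus identity in the stated form, which is classical but is the key algebraic input making the whole scheme collapse to a single clean sum.
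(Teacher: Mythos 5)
Your proposal is correct and follows essentially the same route as the paper: apply Theorem \ref{T1} (via Proposition \ref{P4}) to $L^{\otimes m}$ for every integer $m$ in the window where $|m|C\leq c_{n}K$, use the fact that a nonconstant integer-valued polynomial of degree at most $n$ attains each value at most $n$ times to force a value of magnitude at least $N=\lfloor c_{n}K/(nC)\rfloor$ (the paper packages exactly this counting step as Lemma \ref{L4.2}), and then sum over $p$ using $\chi_{y}(X,L^{\otimes m})|_{y=-1}=\chi(X)$. The localization issue you flag --- that the pigeonhole only places $\tilde{m}$ in $[-nN,nN]$ rather than in $[-N,N]$ as the statement asserts --- is present in the paper's own proof as well, which likewise applies the counting lemma to the $2nN+1$ integers of that wider window.
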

	\begin{remark}
		The conclusion of the theorem  valid for all $p=0,\cdots,n$ if the line bundle $L$ satisfies $\int_{X}c^{n}_{1}(L)\neq 0$. Since the canonical bundle $K_{X}$ on a compact K\"{a}hler hyperbolic manifold is ample (see \cite[Theorem 2.11]{CY}), then $X$ is projective, i.e, there is an embedding $i:X\hookrightarrow\mathbb{P}^{N}$. We denote by $\mathcal{O}(1)$ the  tautological line bundle on $\mathbb{P}^{N}$. The pull back bundle $i^{\ast}\mathcal{O}(1)$ of the line bundle $\mathcal{O}(1)$ on $X$ satisfies $\int_{X}c^{n}_{1}(i^{\ast}\mathcal{O}(1))\neq0$.
	\end{remark}
	The lifted K\"{a}hler form $\tilde{\w}$ on the universal covering space $\pi:(\tilde{X},\tilde{\w})\rightarrow(X,\w)$ is $d$(bounded). Set
	$$Q(\w):=\{\theta\in\Om^{1}(\tilde{X}):\tilde{\w}=d\theta\}.$$
	Let $E(\theta):=\inf_{\theta\in Q(\w)}\|\theta\|_{L^{\infty}(\tilde{X})}$. The eigenvalues of the Laplace-Beltrami operator $\De_{\bar{\pa}_{\tilde{E}}}$ on $\Om^{p,q}_{(2)}(\tilde{X},\tilde{E})$ ($p+q\neq n$) have a lower bounded  $c_{n}E(\theta)^{-2}-C$. Then the Euler number of $X$ satisfies
	$$(-1)^{n}\chi(X)\geq (n+1)+\lfloor\frac{c_{n}E(\theta)^{-2}}{nC} \rfloor\geq n+\frac{c_{n}E(\theta)^{-2}}{nC}.$$
	Hence, we get the following result.
	\begin{corollary}
		Let $(X,\w)$ be a  compact K\"{a}hler manifold with sectional curvature bounded from above by a negative constant, i.e.,
		$$sec\leq -K,$$ 
		for some $K>0$. Suppose that there is a   holomorphic line bundle $L$ on $X$ such that the $p$-Hilbert polynomial $\chi^{p}(X,L^{\otimes m})$ is not constant for some $p\in[0,n]$. Then  
		$$\sqrt{n}K^{-\frac{1}{2}}\geq E(\theta)\geq[\frac{c_{n}}{nC((-1)^{n}\chi(X)-n)}]^{\frac{1}{2}}.$$
	\end{corollary}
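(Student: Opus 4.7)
The corollary is a two-line consequence of Theorem \ref{T4} once the spectral gap estimate $c_{n}E(\theta)^{-2}-C$ announced in the paragraph preceding it is in hand. My plan is to treat the two inequalities separately: the upper bound comes from a classical Hessian comparison on the universal cover, and the lower bound from the refined form of Theorem \ref{T4}.

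For the upper bound $E(\theta)\leq\sqrt n\,K^{-1/2}$, I would work on $\tilde X$, which by the curvature hypothesis is Cartan--Hadamard. Fix a basepoint $o\in\tilde X$ and consider the smooth function $\rho(x):=\tfrac12 d(x,o)^{2}$. Because $\tilde\omega$ is K\"ahler, a suitable $d^{c}$-primitive of $\tilde\omega$ can be extracted from $\rho$; the Hessian comparison theorem for manifolds with $sec\leq -K$ bounds $\nabla^{2}\rho$ in terms of $K^{-1/2}$, and the factor $\sqrt n$ arises from the trace-to-norm passage for the $(1,1)$-form $\tilde\omega$. Taking the infimum over $\theta\in Q(\omega)$ then gives $E(\theta)\leq\sqrt n\,K^{-1/2}$.

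For the lower bound, I would import directly the inequality
$$(-1)^{n}\chi(X)\geq (n+1)+\left\lfloor\frac{c_{n}E(\theta)^{-2}}{nC}\right\rfloor$$
stated right before the corollary. This is exactly Theorem \ref{T4} with $K$ replaced by $E(\theta)^{-2}$; in the Vafa--Witten twisting argument behind Theorem \ref{T4}, the spectral gap for $\Delta_{\bar{\partial}_{\tilde E}}$ on $\Omega^{p,q}_{(2)}(\tilde X,\tilde E)$ with $p+q\neq n$ is really produced by the chosen primitive $\theta$, so the finer bound $c_{n}E(\theta)^{-2}-C$ holds and the rest of the Hilbert-polynomial / $L^{2}$-index argument is unchanged. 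Using $\lfloor x\rfloor\geq x-1$ gives
$$(-1)^{n}\chi(X)-n\geq\frac{c_{n}E(\theta)^{-2}}{nC},$$
which rearranges to $E(\theta)\geq\bigl[c_{n}/(nC((-1)^{n}\chi(X)-n))\bigr]^{1/2}$.

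The main obstacle is not analytic but bookkeeping: one must verify that the constant $K$ entering Theorem \ref{T4} really is nothing more than $E(\theta)^{-2}$ up to the claimed factors, and that the Hessian comparison on $\tilde X$ produces precisely the constant $\sqrt n$. Both amount to quantitative refinements of arguments already established in the paper and in \cite{Gro}, and require no new analytic input.
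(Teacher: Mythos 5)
Your proposal matches the paper's (implicit) argument: the lower bound is exactly the rearrangement of the displayed inequality $(-1)^{n}\chi(X)\geq (n+1)+\lfloor c_{n}E(\theta)^{-2}/(nC)\rfloor\geq n+c_{n}E(\theta)^{-2}/(nC)$, obtained by rerunning Theorem \ref{T4} with the spectral gap $c_{n}\|\theta\|_{L^{\infty}}^{-2}-C$ from Proposition \ref{P2} and taking the infimum over $\theta\in Q(\w)$, while the upper bound is immediate from the cited estimate $\|\theta\|_{L^{\infty}(\tilde{X})}\leq\sqrt{n}K^{-1/2}$ of Chen--Yang \cite[Lemma 3.2]{CY} since $E(\theta)$ is an infimum. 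One minor caveat: you do not need to re-derive that estimate, and your sketch of doing so via a $d^{c}$-primitive extracted from $\rho=\tfrac12 d(\cdot,o)^{2}$ is not quite the right construction (in general $dd^{c}\rho\neq\tilde\w$); the bounded primitive in \cite{CY} comes from the geodesic-contraction homotopy formula, with the Hessian comparison only controlling its norm.
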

	In the K\"{a}hler surfaces case, we can get a stronger result as follows.
	\begin{theorem}\label{T2}
		Let $(X,\w)$ be a  compact K\"{a}hler surface with sectional curvature bounded from above by a negative constant, i.e.,
		$$sec\leq -K,$$ 
		for some $K>0$. Suppose that there is a  holomorphic line bundle $L$ on $X$ such that $\int_{X}c^{2}_{1}(L)\neq 0$. Then the Euler characteristic of $X$ satisfies
		\begin{equation*}
		\chi(X)\geq 3+|\int_{X}c^{2}_{1}(L)|\cdot(\lfloor\frac{c_{n}K}{C}\rfloor)^{2}.
		\end{equation*}
		where $C:=|[\La,i\Theta(L)]|$.
	\end{theorem}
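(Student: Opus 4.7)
The plan is to apply Theorem~\ref{T1} to each twisted line bundle $L^{\otimes m}$ in a symmetric range around $0$ and then extract the quadratic-in-$m$ information via Hirzebruch--Riemann--Roch. Since $\Theta(L^{\otimes m})=m\,\Theta(L)$, the curvature hypothesis $|[\La,i\Theta(L^{\otimes m})]|=|m|C\le c_nK$ of Theorem~\ref{T1} is satisfied for every integer $|m|\le N$, where $N:=\lfloor c_nK/C\rfloor$. This delivers $(-1)^{2-p}\chi^p(X,L^{\otimes m})\ge 1$ for $p=0,1,2$ and each such $m$.

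Next I would write the three Euler characteristics as explicit polynomials in $m$. Hirzebruch--Riemann--Roch gives
\[
\chi^0(X,L^{\otimes m})=\tfrac{m^2}{2}\int_X c_1^2(L)+\tfrac{m}{2}\int_X c_1(L)c_1(X)+\chi(X,\O_X),
\]
while Serre duality in the form $\chi^p(X,E)=(-1)^n\chi^{n-p}(X,E^*)$ yields $\chi^2(X,L^{\otimes m})=\chi^0(X,L^{\otimes(-m)})$, so
\[
\chi^0(X,L^{\otimes m})+\chi^2(X,L^{\otimes m})=m^2\int_X c_1^2(L)+2\chi(X,\O_X).
\]
The crucial surface-specific observation is that $\chi_{-1}(X,L^{\otimes m})=\int_X ch(L^{\otimes m})\,c_2(TX)=\chi(X)$ is constant in $m$, because every monomial $c_1(L)^k c_2(TX)$ with $k\ge 1$ has degree larger than $\dim_{\mathbb{R}}X=4$. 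Subtracting this invariant from the Serre-dual sum isolates
\[
\chi^1(X,L^{\otimes m})=m^2\int_X c_1^2(L)+2\chi(X,\O_X)-\chi(X),
\]
and the sign inequality $-\chi^1(X,L^{\otimes m})\ge 1$ rewrites as $\chi(X)\ge 1+2\chi(X,\O_X)+m^2\int_X c_1^2(L)$ for $|m|\le N$.

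The proof is completed by a case split on the sign of $\int_X c_1^2(L)$. If $\int_X c_1^2(L)>0$, I would set $m=N$ and combine with $\chi(X,\O_X)\ge 1$ (Theorem~\ref{T1} applied to the trivial bundle) to reach the estimate. If $\int_X c_1^2(L)<0$, summing the positivity of $\chi^0$ at $m=\pm N$ produces $2\chi(X,\O_X)\ge 2+N^2\,|\int_X c_1^2(L)|$, which plugged into the previous inequality at $m=0$ delivers the same conclusion $\chi(X)\ge 3+N^2\,|\int_X c_1^2(L)|$.

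No genuinely hard step arises once Theorem~\ref{T1} is in hand; the work is essentially algebraic on a 4-manifold. The key conceptual point -- and what makes the resulting bound quadratic rather than linear as in Theorem~\ref{T4} -- is the invariance $\chi_{-1}(X,L^{\otimes m})\equiv\chi(X)$ in dimension $n=2$, which transfers the quadratic $m$-dependence of $\chi^1(X,L^{\otimes m})$ straight onto $\chi(X)$ instead of merely onto a twisted Euler characteristic.
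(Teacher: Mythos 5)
Your proof is correct and follows the same overall architecture as the paper's: twist by $L^{\otimes m}$ for every integer $|m|\le N:=\lfloor c_nK/C\rfloor$ so that Theorem \ref{T1} applies to each twist, express the three quantities $\chi^{p}(X,L^{\otimes m})$ as quadratic polynomials in $m$ whose alternating sum is the $m$-independent number $\chi(X)$, and conclude by a case split on the sign of $\int_X c_1^2(L)$. The one genuine difference lies in how the linear term $\tfrac{m}{2}\int_X c_1(X)c_1(L)$ is removed. The paper invokes Corollary \ref{C3}, which uses the $L^2$-vanishing theorem on $(2,0)$- and $(0,2)$-forms to prove $\int_X c_1(X)c_1(L)=0$ outright, and then reads off $\chi^{p}(X,L^{\otimes m})=\chi^{p}(X)+(\mathrm{const})\cdot m^{2}\int_X c_1^2(L)$ directly. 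You instead cancel the linear term by the purely formal symmetrization $m\leftrightarrow -m$, using Serre duality in the form $\chi^{2}(X,L^{\otimes m})=\chi^{0}(X,L^{\otimes(-m)})$ together with the constancy of $\chi_{y}(X,L^{\otimes m})|_{y=-1}=\chi(X)$ in dimension two; this bypasses Corollary \ref{C3} entirely and needs nothing beyond Theorem \ref{T1} and Hirzebruch--Riemann--Roch. Your route is slightly more economical, while the paper's yields the additional geometric identity $\int_X c_1(X)c_1(L)=0$ as a by-product. Both arguments degenerate gracefully when $N=0$, where the bound reduces to Gromov's $\chi(X)\ge 3$, and your two case computations (taking $m=N$ when $\int_X c_1^2(L)>0$, and summing $\chi^{0}$ at $m=\pm N$ before evaluating at $m=0$ when $\int_X c_1^2(L)<0$) check out.
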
 
	We denote by $$Z_{p}:=\{m\in\mathbb{R}: P^{(p)}_{n}(m,L)=\chi^{p}(X)\}$$
	the set of real roots of polynomial $P^{(p)}_{n}(m,L)-\chi^{p}(X)$. We denote  
	$$m_{p}(L)=\max_{m\in Z_{p}}|m|.$$	
	\begin{remark}
		Following Corollary \ref{C3},  if then Chern connection of the holomorphic line bundle on  compact K\"{a}hler surface satisfies 
		$$C:=|[\La,i\Theta(L)]|\leq c_{n}k,$$ 
		then $$\int_{X}c_{1}(X)c_{1}(L)=0.$$ 
		For any $p=0,1,2$, we then have (see the proof of Theorem \ref{T2})
		$$m_{p}(L)=0.$$
	\end{remark}
	On higher dimensions case, we have following results.
	\begin{theorem}\label{T5}
		Let $(X,\w)$ be a  compact K\"{a}hler manifold with sectional curvature bounded from above by a negative constant, i.e.,
		$$sec\leq -K,$$ 
		for some $K>0$. Suppose that there is a holomorphic line bundle $L$ on $X$ such that $a_{n}:=\int_{X}c^{n}_{1}(L)\neq0$. Then the Euler characteristic of $X$ satisfies
		$$(-1)^{n}\chi(X)\geq\max\{n+1,n+1+2|a_{n}|sign(\lfloor{c_{n}K-Cm_{p}(L)}\rfloor)(|\lfloor\frac{c_{n}K-Cm_{p}(L)}{2Cn}\rfloor|)^{n} \},$$
		where $C:=|[\La,i\Theta(L)]|$. Furthermore, if $n$ is odd, for any $p=0,1,\dots,n$, we then have
		\begin{equation*}
		(-1)^{n-p}\chi^{p}(X)\geq \max\{1,1+2|a_{n}|sign(\lfloor{c_{n}K-Cm_{p}(L)}\rfloor)(|\lfloor\frac{c_{n}K-Cm_{p}(L)}{2Cn}\rfloor|)^{n}\}.
		\end{equation*}
	\end{theorem}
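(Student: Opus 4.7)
The plan is to combine the sign-definiteness of Theorem \ref{T1} along the family $\{L^{\otimes m}\}_{m\in\Z}$ with the polynomial nature of $m\mapsto P_n^{(p)}(m,L)$ and the hypothesis $a_n\neq 0$, in order to sharpen the baseline bound $(-1)^n\chi(X)\geq n+1$ of \cite{Gro}. Since $\Theta(L^{\otimes m})=m\Theta(L)$ one has $|[\La,i\Theta(L^{\otimes m})]|=|m|C$, so setting $M:=\lfloor c_nK/C\rfloor$, Theorem \ref{T1} applied to $L^{\otimes m}$ for each integer $|m|\leq M$ yields
$$(-1)^{n-p}P_n^{(p)}(m,L)=(-1)^{n-p}\chi^p(X,L^{\otimes m})\geq 1\quad(0\leq p\leq n).$$
Hirzebruch--Riemann--Roch, $P_n^{(p)}(m,L)=\int_X td(X)\,ch(\Om^{p,0}(TX))\,e^{mc_1(L)}$, combined with $a_n\neq 0$, exhibits $P_n^{(p)}(m,L)$ as a polynomial of degree exactly $n$ with leading coefficient $\tfrac{\binom{n}{p}a_n}{n!}$, integer-valued at integer $m$; from the definition of $m_p(L)$, $|P_n^{(p)}(m,L)-\chi^p(X)|\geq 1$ whenever $|m|>m_p(L)$.

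The heart of the argument is the quantitative estimate on $(-1)^{n-p}\chi^p(X)$. Put $N_p:=\lfloor(c_nK-Cm_p(L))/(2Cn)\rfloor$, so that $2nN_p+m_p(L)\leq c_nK/C$, and every integer $m$ with $m_p(L)<|m|\leq 2nN_p+m_p(L)$ provides an integer value $P_n^{(p)}(m,L)$ of sign $(-1)^{n-p}$ lying at distance at least $1$ from $\chi^p(X)$. The model case $n=1$ is immediate: from $P_n^{(p)}(M,L)-P_n^{(p)}(-M,L)=2a_nM$ together with $(-1)^{n-p}P_n^{(p)}(\pm M,L)\geq 1$ one reads off $(-1)^{n-p}\chi^p(X)\geq 1+|a_n|M$. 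For general $n$, I will use the $n$-th symmetric finite-difference identity
$$\sum_{k=0}^n(-1)^{n-k}\binom{n}{k}P_n^{(p)}(m_0+kN_p,L)=\binom{n}{p}a_nN_p^n$$
with $n+1$ symmetric nodes $\{m_0+kN_p\}$ placed inside $[-2nN_p-m_p(L),\,2nN_p+m_p(L)]$, and then pair positive- and negative-$m$ contributions to match the leading-coefficient term $\binom{n}{p}|a_n|N_p^n$ against the sign-bounded integer values. When $n$ is odd, the odd part $P_n^{(p)}(m,L)-P_n^{(p)}(-m,L)$ carries the leading $m^n$ term and the pairing goes through uniformly in $p$, producing
$$(-1)^{n-p}\chi^p(X)\geq 1+2|a_n|\,sign\bigl(\lfloor c_nK-Cm_p(L)\rfloor\bigr)N_p^n$$
when the sign factor is nonnegative; when it is negative only the trivial bound $(-1)^{n-p}\chi^p(X)\geq 1$ survives, matching the $\max$ in the statement.

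To conclude the bound on $(-1)^n\chi(X)$, the Hirzebruch--Riemann--Roch identity $\sum_{p=0}^n(-1)^p ch(\Om^{p,0}(TX))td(X)=c_n(TX)$ (proved from the splitting principle via $\sum_p(-1)^p e_p(e^{-x_j})=\prod_j(1-e^{-x_j})$) forces $\sum_{p=0}^n(-1)^p\chi^p(X)=\chi(X)$, i.e.\ $(-1)^n\chi(X)=\sum_{p=0}^n(-1)^{n-p}\chi^p(X)$; combining the improved lower bound for a single $p$ with $(-1)^{n-q}\chi^q(X)\geq 1$ (Theorem \ref{T1} at $m=0$) for every $q\neq p$ yields the first claimed inequality. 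The main obstacle lies in the finite-difference step: the polynomial $P_n^{(p)}(\cdot,L)$ is controlled only by one-sided sign conditions at integer points together with its fixed leading coefficient, so one has to balance an $(n+1)$-node Lagrange representation of $P_n^{(p)}(0,L)$ against the leading-coefficient identity. The parity of $n$ is decisive here: the odd part $P_n^{(p)}(m,L)-P_n^{(p)}(-m,L)$ inherits the top-degree monomial $m^n$ exactly when $n$ is odd, which is why the per-$p$ lower bound is stated only in that case, while for even $n$ one must instead work with the even combination $\tfrac{1}{2}(P_n^{(p)}(m,L)+P_n^{(p)}(-m,L))-\chi^p(X)$ and settle for the single improved bound needed in the first assertion.
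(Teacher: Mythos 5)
Your overall strategy coincides with the paper's: apply Theorem \ref{T1} to the twists $L^{\otimes m}$ throughout the admissible window $|m|C\leq c_{n}K$, then exploit the fact that $m\mapsto P^{(p)}_{n}(m,L)-\chi^{p}(X)$ is an integer-valued polynomial of degree exactly $n$ with leading coefficient proportional to $a_{n}$ by means of an $n$-th finite-difference identity. Your ``symmetric finite-difference identity'' is precisely the content of the paper's Lemma \ref{L4.4} (Newton's formula together with the even-index binomial sum $2^{n-1}$), so for odd $n$ your argument is essentially the paper's argument, including the observation that the reflection $m\mapsto -m$ preserves the sign of the leading coefficient needed to run the one-sided estimate on whichever half-line has the favourable sign.

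The gap is in the even-$n$ case. When $n$ is even, $sign(a_{n})(P^{(p)}_{n}(m,L)-\chi^{p}(X))>0$ on both sides $|m|>m_{p}(L)$, so if $(-1)^{n-p+1}sign(a_{n})=-1$ the correction $(-1)^{n-p+1}(P^{(p)}_{n}(m,L)-\chi^{p}(X))$ is negative throughout the window and no choice of $m$ improves the bound on $(-1)^{n-p}\chi^{p}(X)$. Your proposed remedy --- passing to the even part $\tfrac{1}{2}(P^{(p)}_{n}(m,L)+P^{(p)}_{n}(-m,L))-\chi^{p}(X)$ --- does not repair this: for even $n$ the even part carries the leading coefficient with the same adverse sign, so the correction term is still negative and one is left with the trivial bound. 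In that situation the only available improvement is on the twisted characteristic, $(-1)^{n-p}\chi^{p}(X,L^{\otimes\tilde{m}})\geq 1+|a_{n}|N^{n}/2^{n-1}$, and your concluding step --- summing $(-1)^{n-q}\chi^{q}(X)\geq 1$ over $q\neq p$ together with one improved \emph{untwisted} term --- cannot absorb it. What is missing (and what the paper supplies via Proposition \ref{C1}) is the dichotomy ``either $(-1)^{n-p}\chi^{p}(X)$ or $(-1)^{n-p}\chi^{p}(X,L^{\otimes\tilde{m}})$ is large,'' combined with the $m$-independence of the alternating sum, $\sum_{p=0}^{n}(-1)^{p}\chi^{p}(X,L^{\otimes m})=\chi_{y}(X,L^{\otimes m})|_{y=-1}=\chi(X)$ for every $m$, which transfers a gain on a twisted $\chi^{p}(X,L^{\otimes\tilde{m}})$ to $(-1)^{n}\chi(X)$. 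With that one additional identity (it follows from your own splitting-principle computation, since $td(X)\prod_{j}(1-e^{-\gamma_{j}})=c_{n}(TX)$ is already of top degree and hence kills all positive-degree components of $ch(L^{\otimes m})$) your proof closes; without it, the first inequality of the theorem remains unproved for even $n$ when $(-1)^{n-p+1}sign(a_{n})=-1$.
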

	
	\section{Preliminaries}
	Let $X$ be a smooth K\"{a}hler manifold with K\"{a}hler form $\w$ and $E$ be a smooth vector bundle over $X$.  We denote by  $\Om^{k}(X,E)$ the space of $C^{\infty}$  sections of the tensor product vector bundle  $\Om^{k}(X)\otimes E$ obtained from $\Om^{k}(X)$ and $E$, i.e., $\Om^{k}(X,E):=\Gamma(\Om^{k}(X)\otimes E)$. We denote by $\Om^{p,q}(X,E)$ the space of $C^{\infty}$ sections of the bundle $\Om^{p,q}(X)\otimes E$. We have a direct sum decomposition
	$$\Om^{k}(X,E)=\bigoplus_{p+q=k}\Om^{p,q}(X,E).$$
	For any connection $d_{A}$ on $E$, we have the covariant exterior derivatives 
	$$d_{A}:\Om^{k}(X)\otimes E\rightarrow\Om^{k+1}(X)\otimes E.$$
	Like the canonical splitting the exterior derivatives $d=\pa+\bar{\pa}$, $d_{A}$ decomposes over $X$ into $$d_{A}=\pa_{A}+\bar{\pa}_{A}.$$
	We will need some of the basic apparatus of Hermitian exterior algebra. Denote by $L$ the operator of exterior multiplication by the K\"{a}hler form $\w$:
	$$L\a=\w\wedge\a,\ \a\in\Om^{p,q}(X,E),$$
	and, as usual, let $\La$ denote its pointwise adjoint, i.e.,
	$$\langle\La\a,\b\rangle=\langle\a,L\b\rangle.$$
	Then it is well known that $\La=\ast^{-1}\circ L\circ \ast$ \cite{Huy}. A basic fact is
	\begin{lemma}\label{L1}
		The map $L:\Om^{p,q}(X,E)\rightarrow\Om^{p+1,q+1}(X,E)$ is injective for $p+q\leq n$.
	\end{lemma}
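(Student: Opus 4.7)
The plan is to reduce the statement to a pointwise linear-algebra assertion on the exterior algebra of the cotangent space and then invoke the standard $\mathfrak{sl}_2$-representation structure on $\Lambda^{*}T_{x}^{*}X\otimes\C$ given by the Lefschetz triple $(L,\Lambda,H)$. The essential point is that the bundle $E$ plays no role: the operator $L$ only wedges the form part with $\w$ and leaves the $E$-factor untouched.

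First I would observe that in a local holomorphic frame $e_{1},\ldots,e_{r}$ of $E$, any $\a\in\Om^{p,q}(X,E)$ can be written as $\a=\sum_{j}\a_{j}\otimes e_{j}$ with $\a_{j}\in\Om^{p,q}(X)$, and by definition $L\a=\sum_{j}(\w\wedge\a_{j})\otimes e_{j}$. Since the $e_{j}$ are pointwise linearly independent, $L\a=0$ iff $L\a_{j}=0$ for every $j$. Thus the injectivity question reduces to showing, at each point $x\in X$, that the linear map $L_{x}:\Lambda^{p,q}T_{x}^{*}X\lra\Lambda^{p+1,q+1}T_{x}^{*}X$ given by exterior multiplication with $\w_{x}$ is injective in the claimed range.

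Next I would recall the pointwise $\mathfrak{sl}_{2}$-action on $\Lambda^{*}T_{x}^{*}X$: with $L=\w_{x}\wedge\cdot$, its formal adjoint $\La$, and $H:=[L,\La]$ acting on $\Lambda^{k}$ as multiplication by $k-n$, one has the commutation relations of $\mathfrak{sl}_{2}(\C)$. The resulting Lefschetz/primitive decomposition writes any $\a\in\Lambda^{p+q}T_{x}^{*}X$ uniquely as $\a=\sum_{j\geq 0}L^{j}\a_{j}$ with $\a_{j}$ primitive of pure degree $p+q-2j$, and a primitive element $\b$ in degree $m$ satisfies $L^{n-m+1}\b=0$ while $L^{\ell}\b\neq 0$ for every $\ell\leq n-m$. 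Taking $\ell=j+1$ shows that $L(L^{j}\a_{j})=L^{j+1}\a_{j}\neq 0$ whenever $j+1\leq n-(p+q-2j)$, i.e., whenever $p+q\leq n-1+j$, which in particular holds in the range asserted by the lemma; since the different components $L^{j+1}\a_{j}$ lie in distinct summands of the primitive decomposition in degree $p+q+2$, their sum cannot cancel unless each $\a_{j}$ vanishes. This yields $L_{x}\a=0\Rightarrow \a=0$ as required.

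Finally I would note that the $(p,q)$-type is preserved by $L_{x}$ (it is the wedge with the real $(1,1)$-form $\w_{x}$), so the injectivity on $\Lambda^{p+q}T_{x}^{*}X\otimes\C$ restricts to injectivity on the $(p,q)$-component. Combined with the first step this gives the injectivity of $L$ on $\Om^{p,q}(X,E)$. There is no genuine obstacle in the argument — it is a standard fact from the linear algebra of Hermitian exterior algebras — and the only care needed is to keep track of the correct range of degrees in the primitive decomposition to cover the stated bound $p+q\leq n$.
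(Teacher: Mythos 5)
Your strategy --- reduce to a pointwise statement on $\Lambda^{p,q}T_{x}^{\ast}X$ via a local frame of $E$, then invoke the $\mathfrak{sl}_{2}$/Lefschetz (primitive) decomposition --- is exactly the argument the paper has in mind: the paper gives no proof of its own, only a pointer to the $sl_{2}$-representation argument in Wells/Demailly/Huybrechts. So in method you and the paper coincide, and the reduction to the scalar case and the use of the direct-sum property of the Lefschetz decomposition are both fine.

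There is, however, a genuine gap at the boundary of the stated range, and your own computation exposes it. You correctly derive that $L^{j+1}\a_{j}\neq 0$ precisely when $p+q\leq n-1+j$; for the primitive component $j=0$ this requires $p+q\leq n-1$, which is \emph{not} implied by the lemma's hypothesis $p+q\leq n$. The sentence ``which in particular holds in the range asserted by the lemma'' is therefore false, and no repair is possible: the kernel of $L$ on $\Lambda^{n}T_{x}^{\ast}X$ is exactly the space of primitive $n$-forms, which is nonzero in every bidegree with $p+q=n$ (already for $n=1$ one has $L:\Lambda^{1,0}\rightarrow\Lambda^{2,1}=0$). So single-$L$ injectivity holds only for $p+q\leq n-1$; the lemma as printed is off by one at $p+q=n$. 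What is true for $k=p+q\leq n$, and what the application in Theorem \ref{T10} actually needs, is injectivity (indeed bijectivity) of the full power $L^{n-k}:\Lambda^{k}\rightarrow\Lambda^{2n-k}$, i.e.\ hard Lefschetz --- and that does not follow by iterating single-$L$ injectivity, since the intermediate degrees exceed $n-1$. Your proof becomes correct if you replace the range by $p+q\leq n-1$ (or restate the lemma for $L^{n-p-q}$); as written, the final degree count is wrong.
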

	The proof is then purely algebraic and can be found in standard texts on geometry. An elegant approach is through representations of $sl_{2}$, see \cite[ Chap.5, Theorem 3.12]{Wel} or \cite{Dem,Huy}.
	
	We recall some definitions on Hermitian vector bundle \cite[Charp V, Section 7]{Dem}. Let $E$ be a Hermitian vector bundle of rank $r$ over a smooth Riemannian manifold $X$, $\dim_{\mathbb{R}}X=n$. We denote respectively by $(\xi_{1},\ldots,\xi_{n})$ and $(e_{1},\ldots,e_{r})$ orthonormal frames on $TX$ and $E$ over an open subset $U\subset X$. The associated inner product of $E$ given by a positive definite Hermitian metric $h_{\la\mu}$ with smooth coefficients on $U$, such that
	$$\langle e_{\la}(x),e_{\mu}(x)\rangle=h_{\la\mu}(x),\ \forall x\in\Om.$$
	When $E$ is Hermitian, one can define a natural sesquilinear map
	$$\Om^{p}(X,E)\times\Om^{q}(X,E)\rightarrow\Om^{p+q}(X,\mathbb{C})$$
	$$(\a,\b)\mapsto tr(s\wedge t)$$
	combining the wedge product of forms with the Hermitian metric on $E$. If $\a=\sum\sigma_{\la}\otimes e_{\la}$, $\b=\sum\tau_{\mu}\otimes e_{\mu}$, we let
	$$tr(\a\wedge\b):=\sum_{1\leq\la,\mu\leq r}\sigma_{\la}\wedge\bar{\tau}_{\mu}\langle e_{\la},e_{\mu}\rangle.$$
	A connection $d_{A}$ said to be compatible with the Hermitian structure of $E$, or briefly Hermitian, if for every $\a\in\Om^{p}(X,E)$, $\b\in\Om^{q}(X,E)$ we have
	\begin{equation}\label{E05}
	dtr(\a\wedge\b)=tr(d_{A}\a\wedge\b)+(-1)^{p}tr(\a\wedge d_{A}\b).
	\end{equation}
	The inner product $\langle\cdot,\cdot\rangle$ on $\Om^{\ast}(X,E)$ defined as, See \cite[Charp VI, Section 3.1]{Dem}
	$$\langle\a,\b\rangle=\ast tr(\a\wedge\ast\b),\ \ \a,\b\in\Om^{p}(X,E).$$
	We denote by $Tr$ the sesquilinear map  $Tr:\Om^{p}(X,EndE)\times\Om^{q}(X,End E)\rightarrow\Om^{p+q}(X,\mathbb{C})$ induced by the map $tr:\Om^{p}(X,E)\times\Om^{q}(X,E)\rightarrow\Om^{p+q}(X,\mathbb{C})$.
	
	There are several commutation relations between the basic operators associated to a K\"{a}hler manifold $X$, all following more or less directly from the K\"{a}hler condition $d\w=0$; taken together, these are referred to as the K\"{a}hler identities \cite{Dem,Huy}. 
	\begin{proposition}\label{P3}
		Let $X$ be a complete K\"{a}hler manifold, $E$ a Hermitian vector bundle over $X$ and $d_{A}$ be a Hermitian connection on $E$. We have the following identities\\
		(i)\ $[\La,\bar{\pa}_{A}]=-\sqrt{-1}\pa^{\ast}_{A}$,\ $[\La,\pa_{A}]=\sqrt{-1}\bar{\pa}^{\ast}_{A}$.\\
		(ii)\ $[\bar{\pa}^{\ast}_{A},L]=\sqrt{-1}\pa_{A}$,\ $[\pa^{\ast}_{A},L]=-\sqrt{-1}\bar{\pa}_{A}$.
	\end{proposition}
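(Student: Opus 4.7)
The four identities are interrelated. Taking formal Hilbert--space adjoints, with $\Lambda^{*}=L$, $(\bar{\pa}_{A})^{*}=\bar{\pa}_{A}^{*}$ and $(\pa_{A})^{*}=\pa_{A}^{*}$, converts each identity of (i) into its counterpart in (ii); for instance $[\Lambda,\bar{\pa}_{A}]^{*}=[\bar{\pa}_{A}^{*},L]$ and $(-\sqrt{-1}\pa_{A}^{*})^{*}=\sqrt{-1}\pa_{A}$. Moreover the two identities within (i) are exchanged by complex conjugation of forms, because $\Lambda$ is a real operator while $(\pa_{A},\bar{\pa}_{A})$ and $(\pa_{A}^{*},\bar{\pa}_{A}^{*})$ are complex--conjugate pairs. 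Thus the whole proposition reduces to establishing the single identity $[\Lambda,\bar{\pa}_{A}]=-\sqrt{-1}\pa_{A}^{*}$.

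Both sides of this identity are first--order differential operators whose value at a point $x_{0}\in X$ depends only on the $1$--jet at $x_{0}$ of the K\"{a}hler metric $g$ and of the local connection form of $d_{A}$. My plan is therefore to fix $x_{0}$ and arrange convenient local data. First, choose holomorphic K\"{a}hler normal coordinates $(z^{1},\dots,z^{n})$ around $x_{0}$ with $g_{i\bar{j}}(x_{0})=\de_{ij}$ and $dg_{i\bar{j}}(x_{0})=0$; their existence rests precisely on the K\"{a}hler condition $d\w=0$. Next, choose a local unitary frame $(e_{\la})$ of $E$ with $h_{\la\mu}(x_{0})=\de_{\la\mu}$ in which the connection $1$--form of $d_{A}$ vanishes at $x_{0}$, obtained from any initial unitary frame by a unitary gauge transformation along radial paths; this is available for any Hermitian connection.

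With these choices, at $x_{0}$ the operators $\bar{\pa}_{A}$ and $\pa_{A}^{*}$ act on $E$--valued forms as the ordinary $\bar{\pa}$ and $\pa^{*}$ in the flat Hermitian model $\Om^{p,q}(\mathbb{C}^{n})\otimes E_{x_{0}}$, while $L$ and $\Lambda$ coincide with the standard pointwise Lefschetz pair. The identity to verify at $x_{0}$ has thus become the classical untwisted K\"{a}hler identity on the exterior algebra of a Hermitian complex vector space with coefficients in the fixed Hermitian space $E_{x_{0}}$. This is purely algebraic and can be verified either on the monomial basis $dz^{I}\wedge d\bar{z}^{J}\otimes e_{\la}$ using the explicit formula $\pa^{*}=-\ast\pa\ast$, or deduced from the $\mathfrak{sl}_{2}$--action of $(L,\Lambda,[L,\Lambda])$ on $\bigwedge T_{x_{0}}^{*}X$ together with the bi-type decomposition, as in \cite[Chap.~VI]{Dem} and \cite[Chap.~5]{Wel}. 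Since $x_{0}$ is arbitrary, the identity propagates globally, and combined with the formal manipulations above it yields all four statements.

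The main obstacle is the simultaneous normalization in the second paragraph: one must verify that both the K\"{a}hler metric and the Hermitian connection can be trivialized to the needed order at $x_{0}$, and confirm that the first--order corrections discarded in the reduction genuinely do not contribute to either side. The K\"{a}hler hypothesis enters essentially in guaranteeing $dg_{i\bar{j}}(x_{0})=0$; without it, torsion--type corrections would appear on the right and the clean identity would fail. All other steps are formal once this local normal form is in place.
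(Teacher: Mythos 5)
Your proof is correct and follows the standard route: the paper itself gives no proof of this proposition, deferring to \cite{Dem,Huy}, and the reduction you describe (both sides are first--order operators depending only on the $1$--jet of the metric and the connection form, so K\"{a}hler normal coordinates together with a radial unitary gauge reduce everything to the constant--coefficient untwisted identity at a point) is precisely the argument carried out in those references. The one step worth making explicit is the conjugation: on $E$--valued forms complex conjugation is not canonically defined, so to pass from the first identity of (i) to the second you should either invoke the conjugate--linear isomorphism $E\cong E^{\ast}$ induced by the Hermitian metric, under which $\pa_{A}$ and $\bar{\pa}_{A}$ are interchanged by the Hermitian compatibility condition, or simply rerun the same pointwise computation for the second identity.
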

	Since $\w$ is parallel, the operator $L^{k}:\Om^{p}(X,E)\rightarrow\Om^{p+2k}(X,E)$ defined by $L^{k}(\a)=\a\wedge\w^{k}$ for all $p$-forms commutes with $d_{A}$. But the operator $L^{k}$ does not commute with $d^{\ast}_{A}$ in general, therefore the operator $L^{k}$ does not commute with $\De_{A}$. 
	
	If $A$ and $B$ are operators on forms, define the (graded) commutator as 
	$$[A,B]=AB-(-1)^{degA\cdot degB}BA,$$
	where $degT$ is the integer $d$ for $T$: $\oplus_{p+q=r}\Om^{p,q}(X,E)\rightarrow\oplus_{p+q=r+d}\Om^{p,q}(X,E)$. If $C$ is another endomorphism of degree $c$, the following $Jacobi$ $identity$ is easy to check
	$$(-1)^{ca}\big{[}A,[B,C]\big{]}+(-1)^{ab}\big{[}B,[C,A]\big{]}+(-1)^{bc}\big{[}C,[A,B]\big{]}=0.$$
	At first, we observe that the operator $L^{k}$ commutes with $\De_{A}$ for any connection $d_{A}\in\mathcal{A}_{E}^{1,1}$.
	\begin{lemma}\label{L3}(\cite[Lemma 3.9]{Hua})
		$$[\De_{A},L^{k}]=2k\sqrt{-1}(F^{2,0}_{A}-F^{0,2}_{A})L^{k-1},\ \forall\ k\in\mathbb{N}.$$
		In particular, if the connection $d_{A}\in\mathcal{A}_{E}^{1,1}$, then $\De_{A}$ commutes with $L^{k}$ for any $k\in\mathbb{N}$.
	\end{lemma}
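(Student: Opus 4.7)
The plan is to reduce $[\De_{A}, L^{k}]$ to a computation involving only the $(2,0)$ and $(0,2)$ components of the curvature, by combining the K\"ahler identities from Proposition \ref{P3} with the fact that $L$ essentially commutes with the various pieces of $d_{A}$ on a K\"ahler manifold. First, because $\w$ is a closed real $(1,1)$-form, a direct computation gives $d_{A}(L\a) = d\w\wedge\a + \w\wedge d_{A}\a = L\,d_{A}\a$, so $[d_{A}, L] = 0$; the sharper K\"ahler condition $\pa\w = \bar{\pa}\w = 0$ shows that $L$ also commutes with $\pa_{A}$ and $\bar{\pa}_{A}$ individually. A straightforward induction then yields $[d_{A}, L^{k}] = 0$ for every $k\in\N$.

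Next I would compute $[d_{A}^{\ast}, L^{k}]$. Proposition \ref{P3} gives
\begin{equation*}
[d_{A}^{\ast}, L] = [\pa_{A}^{\ast}, L] + [\bar{\pa}_{A}^{\ast}, L] = -\sqrt{-1}\,\bar{\pa}_{A} + \sqrt{-1}\,\pa_{A} = \sqrt{-1}(\pa_{A} - \bar{\pa}_{A}),
\end{equation*}
and since $L$ commutes with $\pa_{A}-\bar{\pa}_{A}$ by the first step, an induction using the graded Leibniz rule $[d_{A}^{\ast}, L\cdot L^{k-1}] = [d_{A}^{\ast}, L]L^{k-1} + L[d_{A}^{\ast}, L^{k-1}]$ gives $[d_{A}^{\ast}, L^{k}] = k\sqrt{-1}(\pa_{A}-\bar{\pa}_{A})L^{k-1}$.

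Finally, I expand $[\De_{A}, L^{k}] = [d_{A}d_{A}^{\ast} + d_{A}^{\ast}d_{A}, L^{k}]$ via the graded Leibniz rule. Since $[d_{A}, L^{k}] = 0$ and $L^{k}$ has even degree (so all Koszul signs are trivial), the expansion collapses to
\begin{equation*}
d_{A}[d_{A}^{\ast}, L^{k}] + [d_{A}^{\ast}, L^{k}]d_{A} = k\sqrt{-1}\bigl((\pa_{A}+\bar{\pa}_{A})(\pa_{A}-\bar{\pa}_{A}) + (\pa_{A}-\bar{\pa}_{A})(\pa_{A}+\bar{\pa}_{A})\bigr)L^{k-1},
\end{equation*}
after using $[d_{A}, L^{k-1}] = 0$ to pull $L^{k-1}$ to the right. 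Expanding the two products, the cross-terms $\pm(\pa_{A}\bar{\pa}_{A}-\bar{\pa}_{A}\pa_{A})$ cancel, leaving $2(\pa_{A}^{2} - \bar{\pa}_{A}^{2})L^{k-1} = 2(F_{A}^{2,0} - F_{A}^{0,2})L^{k-1}$, which is the stated identity. The ``in particular'' clause is then immediate, since $d_{A}\in\mathcal{A}_{E}^{1,1}$ means $F_{A}^{2,0} = F_{A}^{0,2} = 0$. The main place where care is needed is the sign bookkeeping when distributing the graded commutator across products of operators of mixed parity; fortunately the evenness of $\deg L^{k} = 2k$ kills every Koszul sign that appears, so once the induction formula for $[d_{A}^{\ast}, L^{k}]$ is established the final step is essentially bookkeeping.
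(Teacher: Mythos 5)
Your argument is correct and is essentially the paper's own proof in slightly different packaging: both rest on $[d_{A},L]=0$, the K\"ahler identities giving $[d_{A}^{\ast},L]=\sqrt{-1}(\pa_{A}-\bar{\pa}_{A})$, and the identification $\pa_{A}^{2}=F^{2,0}_{A}$, $\bar{\pa}_{A}^{2}=F^{0,2}_{A}$. The only difference is organizational — the paper extracts $[L,\De_{A}]=\big[d_{A},[d_{A}^{\ast},L]\big]$ from the Jacobi identity and then inducts on $[\De_{A},L^{k}]$, whereas you induct on $[d_{A}^{\ast},L^{k}]$ first and then do a single Leibniz expansion — and your sign bookkeeping (all Koszul signs trivial since $\deg L^{k}$ is even) checks out.
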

	\begin{proof}
		The case of $k=1$: the operators $d_{A}$,\ $d^{\ast}_{A}$ and $L$ satisfy the following Jacobi identity:
		$$-\big{[}L,[d_{A},d^{\ast}_{A}]\big{]}+\big{[}d^{\ast}_{A},[L,d_{A}]\big{]}+\big{[}d_{A},[d^{\ast}_{A},L]\big{]}=0.$$
		Then we have
		\begin{equation}\nonumber
		\begin{split}
		[L,\De_{A}]&=\big{[}d_{A},[d^{\ast}_{A},L]\big{]}\\
		&=[\pa_{A}+\bar{\pa}_{A},\sqrt{-1}(\pa_{A}-\bar{\pa}_{A})]\\
		&=[\sqrt{-1}\pa_{A},\pa_{A}]-[\sqrt{-1}\bar{\pa}_{A},\bar{\pa}_{A}]\\
		&=2\sqrt{-1}(F^{2,0}_{A}-F^{0,2}_{A}).\\
		\end{split}
		\end{equation}
		We suppose that the case of $p=k-1$ is true, i.e., $$[\De_{A},L^{k-1}]=2(k-1)\sqrt{-1}(F^{2,0}_{A}-F^{0,2}_{A})L^{k-2}.$$ Thus if $p=k$, we have
		\begin{equation}\nonumber
		\begin{split}
		[\De_{A},L^{k}]&=[\De_{A},L]L^{k-1}+L[\De_{A},L^{k-1}]\\
		&=2\sqrt{-1}(F^{0,2}_{A}-F^{2,0}_{A})L^{k-1}+2(k-1)\sqrt{-1}L(F^{2,0}_{A}-F^{0,2}_{A})L^{k-2}\\
		&=2k\sqrt{-1}(F^{2,0}_{A}-F^{0,2}_{A})L^{k-1}.
		\end{split}
		\end{equation}
		If $d_{A}\in\mathcal{A}_{E}^{1,1}$, then $[\De_{A},L^{k}]=0$.
	\end{proof}
	\section{Harmonic forms on vector bundle $E$}
	As we derive estimates in this section (and also following sections), there will be many constants which appear. To simplify notation we shall write $a\lesssim b$, for $a\leq const_{n}b$, and $a\approx b$, for $b\lesssim a\lesssim b$. 
	\subsection{Uniform positive lower bounds for the least eigenvalue of $\De_{\bar{\pa}_{E}}$}
	Let $(X,g)$ be an oriented, smooth, Riemannian manifold, $\dim_{\mathbb{R}}X=n$, and $E$ be a Hermitian vector bundle over $X$. Assume now that $d_{A}$ is a Hermitian connection on $E$. The formal adjoint operator of $d_{A}$ acting on $\Om^{p}(X,E)$ is $d^{\ast}_{A}=(-1)^{np+1}\ast d_{A}\ast$, where the operator $\ast:\Om^{p}(X,E)\rightarrow\Om^{n-p}(X,E)$ induced by the Hodge-Poincar\'{e}-de Rahm operator $\ast_{g}$ \cite[Charp Vi, Section 3]{Dem}. Indeed, if $\a\in\Om^{p}(X,E)$, $\b\in\Om^{p+1}(X,E)$ have compact support, we get
	$$\int_{X}\langle d_{A}\a,\b\rangle=\int_{X}\langle \a,d^{\ast}_{A}\b\rangle.$$
	The Laplace-Beltrami operator associated to $d_{A}$ is the second order operator $\De_{A}=d_{A}d^{\ast}_{A}+d^{\ast}_{A}d_{A}$. The space of $L^{2}$-harmonic forms of degree of $k$ respect to the Laplace-Beltrami operator $\De_{A}$ is defined by
	$$\mathcal{H}^{k}_{(2)}(X,E)=\{\a\in\Om^{k}_{(2)}(X,E):\De_{A}\a=0\}.$$
	Define the $\de$-Laplacian by setting $\De_{\de}:=[\de,\de^{\ast}]$. For all $(p,q)$, we denote by 
	$$\mathcal{H}^{p,q}_{(2);\de}(X,E):=\ker(\De_{\de})\cap\Om^{p,q}_{(2)}(X,E)$$
	the space of $L^{2}$ $\de$-harmonic forms in bidegree $(p,q)$. We have an useful lemma as follows.
	\begin{lemma}(\cite[Lemma 3.2]{Hua})
		Let $X$ be a complete Riemannian manifold $X$, $E$ a Hermitian vector bundle over $X$. Then 
		$$\mathcal{H}^{k}_{(2)}(X,E)=\ker d_{A}\cap\ker d_{A}^{\ast}\cap\Om^{k}_{(2)}(X,E),$$
		$$\mathcal{H}^{p,q}_{(2);\de}(X,E)=\ker \de\cap\ker \de^{\ast}\cap\Om^{p,q}_{(2)}(X,E),$$
		where $\de=\bar{\pa}_{A}$ or $\pa_{A}$.
	\end{lemma}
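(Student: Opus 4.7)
The first containment is immediate: if $\alpha \in \ker d_A \cap \ker d_A^{\ast} \cap \Om^{k}_{(2)}(X,E)$, then $\De_A \alpha = d_A d_A^{\ast} \alpha + d_A^{\ast} d_A \alpha = 0$, so $\alpha \in \mathcal{H}^{k}_{(2)}(X,E)$. The substance of the lemma is the reverse inclusion. On a compact manifold it would follow at once from $0 = \langle \De_A \alpha, \alpha\rangle = \|d_A\alpha\|^{2} + \|d_A^{\ast}\alpha\|^{2}$, but on a complete noncompact $X$ this integration by parts is not a priori legal, since $d_A\alpha$ and $d_A^{\ast}\alpha$ need not lie in $L^{2}$ globally. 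The plan is to recover the vanishing through the Gaffney cutoff argument, which is available precisely because $X$ is complete.

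Concretely, fix a base point $x_0 \in X$ and use completeness of the Riemannian distance to build smooth cutoffs $\eta_R : X \to [0,1]$ with $\eta_R \equiv 1$ on the geodesic ball $B(x_0, R)$, $\mathrm{supp}\,\eta_R \subset B(x_0, 2R)$, and $\|d\eta_R\|_{L^{\infty}} \leq C/R$. For $\alpha \in \mathcal{H}^{k}_{(2)}(X,E)$ the form $\eta_R^{2}\alpha$ has compact support, so pairing $\De_A \alpha = 0$ against it is legal, and unraveling via the Leibniz rules $d_A(\eta_R^{2}\alpha) = \eta_R^{2} d_A\alpha + 2\eta_R\, d\eta_R \wedge \alpha$ and $d_A^{\ast}(\eta_R^{2}\alpha) = \eta_R^{2} d_A^{\ast}\alpha - 2\eta_R\, \iota_{\nabla\eta_R}\alpha$ yields
$$\|\eta_R d_A\alpha\|^{2} + \|\eta_R d_A^{\ast}\alpha\|^{2} = -2\langle d_A\alpha,\, \eta_R\, d\eta_R \wedge \alpha\rangle + 2\langle d_A^{\ast}\alpha,\, \eta_R\, \iota_{\nabla\eta_R}\alpha\rangle.$$
A Young inequality with a small parameter absorbs a half of $\|\eta_R d_A\alpha\|^{2} + \|\eta_R d_A^{\ast}\alpha\|^{2}$ from the right back into the left, leaving
$$\|\eta_R d_A\alpha\|^{2} + \|\eta_R d_A^{\ast}\alpha\|^{2} \lesssim \|d\eta_R\|_{L^{\infty}}^{2}\|\alpha\|_{L^{2}}^{2} \lesssim R^{-2}\|\alpha\|_{L^{2}}^{2}.$$

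Sending $R \to \infty$ and applying monotone convergence on the left, combined with $\alpha \in L^{2}$ on the right, forces $\|d_A\alpha\|_{L^{2}} = \|d_A^{\ast}\alpha\|_{L^{2}} = 0$, hence pointwise vanishing by smoothness of $\alpha$. The Dolbeault-type identity with $\de \in \{\bar{\pa}_A, \pa_A\}$ in place of $d_A$ follows from the same computation verbatim, since the cutoff step uses only that $\De_{\de} = \de\de^{\ast} + \de^{\ast}\de$ and that $(\de, \de^{\ast})$ is a formally adjoint pair on compactly supported smooth sections. The one mildly subtle point is the absorption step: the cross terms still contain $d_A\alpha$ and $d_A^{\ast}\alpha$, so one must split them with a weighted Young inequality so that only a fraction of the squared norms appears on the right, and that fraction can be moved back to the left. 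Everything else is routine bookkeeping.
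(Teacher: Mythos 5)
The paper does not actually prove this lemma---it is imported verbatim from \cite[Lemma 3.2]{Hua} with no argument given---so there is no in-paper proof to compare against; your Gaffney-type cutoff argument is the standard proof of exactly this statement (going back to Gaffney and Andreotti--Vesentini) and is correct and complete as written, including the key points that completeness supplies cutoffs with $\|d\eta_R\|_{L^\infty}\lesssim 1/R$ and that the cross terms must be split by a weighted Young inequality before absorption. The only cosmetic addition one might make is an explicit appeal to elliptic regularity to ensure $\a$ is smooth before integrating by parts against the compactly supported test form, but this is routine.
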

	\begin{theorem}\label{T10}
		Let $(X,\w)$ be a complete, K\"{a}hler manifold, $\dim_{\C}X=n$, with a $d$(bounded) K\"{a}hler form $\w$, i.e., there is a bounded $1$-form $\theta$ such that $\w=d\theta$, $d_{A}\in\mathcal{A}_{E}^{1,1}$ be a smooth Hermitian integrable connection on a Hermitian vector bundle $E$ over $X$. Then 
		$$\mathcal{H}^{k}_{(2)}(X,E)=\{0\},\ \forall\ k\neq n.$$	
	\end{theorem}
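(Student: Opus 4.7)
The argument follows Gromov's $L^{2}$-vanishing scheme for K\"ahler hyperbolic manifolds, adapted to the bundle-valued setting by means of the integrability hypothesis $d_{A}\in\mathcal{A}_{E}^{1,1}$. The two key algebraic inputs are at hand: Lemma \ref{L3} gives $[\De_{A},L^{j}]=0$ for every $j\geq 0$, and Lemma \ref{L1} furnishes the pointwise hard-Lefschetz injectivity of $L^{n-k}$ on $(p,q)$-forms of total degree $k<n$. Combined with $\w=d\theta$, these force every $L^{2}$-harmonic form of degree $\neq n$ to vanish. Hodge star duality (composed with the Hermitian pairing on $E$) yields an isometry $\mathcal{H}^{k}_{(2)}(X,E)\cong\mathcal{H}^{2n-k}_{(2)}(X,E^{*})$; the induced Hermitian connection on $E^{*}$ is again integrable, so it suffices to treat the range $k<n$.

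\emph{Step 1: produce a bounded primitive.} Fix $\alpha\in\mathcal{H}^{k}_{(2)}(X,E)$ with $k<n$ and set $j:=n-k\geq 1$. Since $d\w=0$ and $\w=d\theta$, one has $\w^{j}=d(\theta\wedge\w^{j-1})$. Harmonicity of $\alpha$ gives $d_{A}\alpha=0$, hence
\begin{equation*}
L^{j}\alpha=\w^{j}\wedge\alpha=d_{A}\bigl(\theta\wedge\w^{j-1}\wedge\alpha\bigr).
\end{equation*}
Set $\b:=\theta\wedge\w^{j-1}\wedge\alpha$. Because $\theta$ is bounded in $L^{\infty}$ and $\w$ is parallel on the K\"ahler manifold (hence has bounded pointwise norm), one has $\b\in L^{2}(X,E)$ with $\|\b\|_{L^{2}}\lesssim\|\theta\|_{L^{\infty}}\|\alpha\|_{L^{2}}$.

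\emph{Step 2: kill $L^{j}\alpha$ and descend.} By Lemma \ref{L3}, $L^{j}\alpha\in\mathcal{H}^{2n-k}_{(2)}(X,E)$, so in particular $d_{A}^{*}L^{j}\alpha=0$. A cutoff/density argument on the complete manifold $X$, available because both $\b$ and $L^{j}\alpha$ lie in $L^{2}$, justifies the integration by parts
\begin{equation*}
\|L^{j}\alpha\|_{L^{2}}^{2}=\langle d_{A}\b,\,L^{j}\alpha\rangle=\langle\b,\,d_{A}^{*}L^{j}\alpha\rangle=0.
\end{equation*}
Hence $L^{j}\alpha\equiv 0$. Because $\De_{A}$ preserves bidegree when $d_{A}$ is integrable (via the K\"ahler identities of Proposition \ref{P3}), one decomposes $\alpha$ into its pure-type components, each still harmonic and killed by $L^{j}$; the pointwise injectivity provided by Lemma \ref{L1} then forces $\alpha=0$.

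\emph{Main obstacle.} The only nonformal point is the integration by parts on the noncompact complete manifold $X$ in Step 2. One approximates by a sequence of compactly supported cutoffs $\chi_{R}\to 1$ with $\|d\chi_{R}\|_{L^{\infty}}\to 0$, controls the boundary term $\langle d\chi_{R}\wedge\b,\,L^{j}\alpha\rangle$ by Cauchy--Schwarz using the $L^{\infty}$ bound on $\theta$ and the $L^{2}$ bounds on $\alpha$ and $L^{j}\alpha$, and passes to the limit. This is the standard Andreotti--Vesentini density argument for complete Riemannian manifolds; everything else is an immediate consequence of Lemmas \ref{L1} and \ref{L3} together with $\w=d\theta$.
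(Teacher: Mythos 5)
Your proposal is correct and follows essentially the same route as the paper: express $L^{n-k}\alpha=d_{A}(\theta\wedge\w^{n-k-1}\wedge\alpha)$ with an $L^{2}$ primitive, use Lemma \ref{L3} to see $L^{n-k}\alpha$ is again harmonic, kill it by integration by parts on the complete manifold (the paper outsources this step to \cite[Proposition 3.7]{Hua}, which you instead carry out via the Andreotti--Vesentini cutoff argument), conclude $\alpha=0$ from the Lefschetz injectivity of Lemma \ref{L1}, and dispose of $k>n$ by duality with $E^{\ast}$. No substantive difference.
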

	\begin{proof}
		Let $k<n$. For every $d_{A}$-closed $L^{2}$ $k$-form $\a$, the form 
		$$L^{n-k}\a=\w^{n-k}\wedge\a=d_{A}(\w^{n-k-1}\wedge\theta\wedge\a)$$ 
		is $L^{2}$. We denote $\b=\w^{n-k-1}\wedge\theta\wedge\a$. One can see that $\b$ is $L^{2}$. In particular, if $\a$ is $\De_{A}$-harmonic, then $L^{k}\a=0$ is also   $\De_{A}$-harmonic. Following  Proposition \cite[Proposition 3.7]{Hua}, we get $L^{n-k}\a=0$. This implies, by Lemma \ref{L1}, that $\a=0$. The case $k>n$ follows by $E^{\ast}$ is a holomorphic verctor bundle on $X$ and $\mathcal{H}^{k}_{(2)}(X,E)\cong\mathcal{H}^{2n-k}_{(2)}(X,E^{\ast})=\{0\}$.
	\end{proof}
	We want to sharpen the Lefschetz vanishing theorem \ref{T10} by giving a lower bound on the spectrum of the Laplace operator $\De_{A}$ on $L^{2}$-forms $\Om^{k}(X,E)$ for $k\neq n$.
	\begin{theorem}\label{T3}(\cite[Theorem 1.3]{Hua})
		Let $(X,\w)$ be a complete, K\"{a}hler manifold, $\dim_{\C}X=n$, with a $d$(bounded) K\"{a}hler form $\w$, i.e., there is a bounded $1$-form $\theta$ such that $\w=d\theta$, $d_{A}\in\mathcal{A}_{E}^{1,1}$ be a smooth Hermitian integrable connection on a Hermitian vector bundle $E$ over $X$. If $\a\in\Om^{k}_{(2)}(X,E)$ such that $\De_{A}\a\in L^{2}$, ($k\neq n)$, then we have the inequality
		\begin{equation}\label{E08}
		c_{n,k}\|\theta\|^{-2}_{L^{\infty}(X)}\|\a\|^{2}_{L^{2}(X)}\leq\langle\a,\De_{A}\a\rangle_{L^{2}(X)},
		\end{equation}
		where $c_{n,k}>0$ is a constant which depends only on $n,k$. 
	\end{theorem}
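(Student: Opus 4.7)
The plan is to adapt Gromov's $L^{2}$-spectral gap argument to the bundle-valued setting, using the K\"ahler identities of Proposition \ref{P3} together with the hard Lefschetz theorem. The Hodge star identifies $\Om^{k}_{(2)}(X,E)$ with $\Om^{2n-k}_{(2)}(X,E^{\ast})$ and intertwines the Laplace-Beltrami operators, so I restrict without loss of generality to the case $k<n$. The key algebraic input is the $d$-boundedness hypothesis $\w=d\theta$: combined with $d\w=0$ it gives $\w^{n-k}=d(\theta\wedge\w^{n-k-1})$, and the Leibniz rule (\ref{E05}) for the Hermitian connection yields, for any $\a\in\Om^{k}(X,E)$,
\[
L^{n-k}\a \;=\; d_{A}\bigl(\theta\wedge\w^{n-k-1}\wedge\a\bigr)\;+\;\theta\wedge\w^{n-k-1}\wedge d_{A}\a.
\]

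I would then pair both sides with $L^{n-k}\a$ in $L^{2}$ and integrate by parts to obtain
\[
\|L^{n-k}\a\|_{L^{2}}^{2} \;=\; \langle d_{A}^{\ast}L^{n-k}\a,\;\theta\wedge\w^{n-k-1}\wedge\a\rangle \;+\; \langle L^{n-k}\a,\;\theta\wedge\w^{n-k-1}\wedge d_{A}\a\rangle.
\]
To control the first term I would compute the commutator $[d_{A}^{\ast},L^{n-k}]$ using Proposition \ref{P3}(i)--(ii). Since $\w$ is closed of pure type, $L$ commutes with $\pa_{A}$ and $\bar{\pa}_{A}$, so a short induction gives
\[
[d_{A}^{\ast},L^{n-k}] \;=\; \sqrt{-1}(n-k)\,L^{n-k-1}(\pa_{A}-\bar{\pa}_{A}),
\]
whence $\|d_{A}^{\ast}L^{n-k}\a\|_{L^{2}}\lesssim \|d_{A}^{\ast}\a\|_{L^{2}}+\|d_{A}\a\|_{L^{2}}$ after using bidegree orthogonality on the K\"ahler manifold. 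Cauchy--Schwarz combined with the pointwise bound $|\theta\wedge\w^{n-k-1}\wedge\eta|\lesssim \|\theta\|_{L^{\infty}(X)}|\eta|$ then gives
\[
\|L^{n-k}\a\|_{L^{2}}^{2} \;\lesssim\; \|\theta\|_{L^{\infty}(X)}\,\|\a\|_{L^{2}}\bigl(\|d_{A}\a\|_{L^{2}}+\|d_{A}^{\ast}\a\|_{L^{2}}\bigr).
\]
Finally I would invoke the hard Lefschetz lower bound $\|\a\|_{L^{2}}\leq c(n,k)\|L^{n-k}\a\|_{L^{2}}$ for $k<n$ (Lemma \ref{L1}, upgraded to an $L^{2}$ norm comparison via the $sl_{2}$-representation theory cited after that lemma), which together with $\|d_{A}\a\|^{2}+\|d_{A}^{\ast}\a\|^{2}=\langle\a,\De_{A}\a\rangle$ and $(a+b)^{2}\leq 2(a^{2}+b^{2})$ delivers the claimed inequality (\ref{E08}).

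The main obstacle I anticipate is justifying the integration by parts on the noncompact complete $X$ when $\a$ is merely in $L^{2}$: neither $\a$ nor $\theta\wedge\w^{n-k-1}\wedge\a$ is compactly supported. I would handle this by the standard exhaustion with Lipschitz cutoffs $\chi_{R}$, $\chi_{R}\equiv 1$ on $B(x_{0},R)$ and $|d\chi_{R}|\lesssim 1/R$, which exist on any complete Riemannian manifold. The error terms produced have the schematic form $\langle d\chi_{R}\wedge(\theta\wedge\w^{n-k-1}\wedge\a),\;L^{n-k}\a\rangle$ and tend to zero along $R\to\infty$ because $\theta\in L^{\infty}$, $\a\in L^{2}$, and the hypothesis $\De_{A}\a\in L^{2}$ guarantees $d_{A}\a,d_{A}^{\ast}\a\in L^{2}$ as well. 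A secondary but purely bookkeeping task is tracking the constants so that the final $c_{n,k}$ depends only on $n,k$; this reduces to controlling the $sl_{2}$ comparison constant in hard Lefschetz and the absorption constants in Cauchy--Schwarz.
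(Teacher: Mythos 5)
Your proposal is correct and follows essentially the same route as the paper: both decompose $L^{n-k}\a=d_{A}(\theta\wedge\w^{n-k-1}\wedge\a)+\theta\wedge\w^{n-k-1}\wedge d_{A}\a$, pair against $L^{n-k}\a$, apply Cauchy--Schwarz with the $L^{\infty}$ bound on $\theta$, and close with the Lefschetz quasi-isometry. The only (harmless) divergence is that you control $d_{A}^{\ast}L^{n-k}\a$ by computing $[d_{A}^{\ast},L^{n-k}]$ directly from Proposition \ref{P3}, while the paper instead invokes $[\De_{A},L^{k}]=0$ from Lemma \ref{L3} to get $\langle\De_{A}L^{k}\a,L^{k}\a\rangle\approx\langle\De_{A}\a,\a\rangle$ and hence $\|d_{A}^{\ast}L^{k}\a\|^{2}\lesssim\langle\De_{A}\a,\a\rangle$ --- both uses of integrability are equivalent in effect, and your cutoff discussion just makes explicit what the paper leaves implicit.
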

	\begin{proof}
		Let $\a$ be a $p$-form on vector bundle ($p<n$), we denote $\b=L^{k}\a=\w^{k}\wedge\a$. We recall the operator $L^{k}:\Om^{p}(X,E)\rightarrow\Om^{p+2k}(X,E)$ for a given $p<n$ and $p+k=n$. Since the Lefschetz theorem $L^{k}$ is a bijective quasi-isometry,  
		$$\|\a\|_{L^{2}(X)}\thickapprox\|\b\|_{L^{2}(X)}.$$ 
		If $\a$ is in $L^{2}$, $\b$ is also in $L^{2}$. Since $F_{A}^{0,2}=0$, following Lemma \ref{L3}, $[L^{k},\De_{A}]=0$. Then we have
		$$\langle\De_{A}\b,\b\rangle_{L^{2}(X)}=\langle L^{k}(\De_{A}\a)\,L^{k}\a\rangle_{L^{2}(X)}\approx\langle\De_{A}\a,\a\rangle_{L^{2}(X)}.$$
		We write $\b=d_{A}\eta-\tilde{\a}$, for $\eta=\a\wedge\w^{k-1}\wedge\theta$ and $\tilde{\a}=d_{A}\a\wedge\w^{k-1}\wedge\theta$. Observe that
		\begin{equation}\label{E01}
		\|\eta\|_{L^{2}(X)}\lesssim\|\theta\|_{L^{\infty}(X)}\|\a\|_{L^{2}(X)}\lesssim\|\theta\|_{L^{\infty}(X)}\|\b\|_{L^{2}(X)},
		\end{equation}
		and
		\begin{equation}\label{E02}
		\begin{split}
		\|\tilde{\a}\|_{L^{2}(X)}&\lesssim\|d_{A}\a\|_{L^{2}(X)}\|\theta\|_{L^{\infty}(X)}\lesssim\langle\De_{A}\a,\a\rangle_{L^{2}(X)}^{1/2}\|\theta\|_{L^{\infty}(X)}.\\
		\end{split}
		\end{equation}
		We then have
		\begin{equation}\nonumber
		\begin{split}
		\|\b\|^{2}_{L^{2}(X)}&\leq|\langle\b,d_{A}\eta\rangle_{L^{2}(X)}|+|\langle\b,\tilde{\a}\rangle_{L^{2}(X)}|\\
		&\leq|\langle d_{A}^{\ast}\b,\eta\rangle_{L^{2}(X)}|+|\langle\b,\tilde{\a}\rangle_{L^{2}(X)}|\\
		&\lesssim\langle\De_{A}\b,\b\rangle_{L^{2}(X)}^{1/2}\|\theta\|_{L^{\infty}(X)}\|\b\|_{L^{2}(X)}
		+\|\b\|_{L^{2}(X)}\|d_{A}\a\|_{L^{2}(X)}\|\theta\|_{L^{\infty}(X)}\\
		&\lesssim\langle\De_{A}\a,\a\rangle_{L^{2}(X)}^{1/2}\|\theta\|_{L^{\infty}(X)}\|\b\|_{L^{2}(X)}.\\
		\end{split}
		\end{equation}
		This yields the desired estimation
		$$
		\|\a\|^{2}_{L^{2}(X)}\lesssim\|\b\|^{2}_{L^{2}(X)}
		\lesssim\|\theta\|^{2}_{L^{\infty}(X)}\langle\De_{A}\a,\a\rangle_{L^{2}(X)}.$$
		The case $p>n$ follows by $E^{\ast}$ is a holomorphic verctor bundle on $X$, the Poincar\'{e} duality as the operator $\ast_{E}:\Om^{p}(X,E)\rightarrow\Om^{2n-p}(X,E^{\ast})$ commutes with $\De_{A}$ and is isometric for the $L^{2}$-norms.
		
	\end{proof}
	\begin{lemma}
		If $d_{A}\in\mathcal{A}^{1,1}_{E}$, then
		$$\De_{A}=\De_{\bar{\pa}_{E} }+\De_{\pa_{E}}.$$
	\end{lemma}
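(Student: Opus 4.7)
The plan is to expand $\Delta_A$ using $d_A = \partial_E + \bar\partial_E$ and $d_A^\ast = \partial_E^\ast + \bar\partial_E^\ast$, then show that the cross-type terms cancel as a consequence of integrability. Writing out the definition,
\begin{equation*}
\Delta_A = (\partial_E+\bar\partial_E)(\partial_E^\ast+\bar\partial_E^\ast) + (\partial_E^\ast+\bar\partial_E^\ast)(\partial_E+\bar\partial_E),
\end{equation*}
and collecting the eight summands by bidegree, I obtain
\begin{equation*}
\Delta_A = \Delta_{\partial_E} + \Delta_{\bar\partial_E} + \{\partial_E,\bar\partial_E^\ast\} + \{\bar\partial_E,\partial_E^\ast\},
\end{equation*}
where $\{\cdot,\cdot\}$ denotes the anticommutator. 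So the task reduces to showing that the last two mixed-type anticommutators vanish when $d_A\in\mathcal{A}^{1,1}_E$.

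For the first mixed term I would invoke the K\"{a}hler identity from Proposition \ref{P3}, namely $\bar\partial_E^\ast = -\sqrt{-1}[\partial_E,\Lambda]$, and compute
\begin{equation*}
\{\partial_E,\bar\partial_E^\ast\} = -\sqrt{-1}\bigl(\partial_E[\partial_E,\Lambda] + [\partial_E,\Lambda]\partial_E\bigr).
\end{equation*}
Expanding and cancelling the two copies of $\partial_E\Lambda\partial_E$ that appear with opposite signs, this collapses to $-\sqrt{-1}[\partial_E^2,\Lambda]$. Similarly, using $\partial_E^\ast = \sqrt{-1}[\bar\partial_E,\Lambda]$ (i.e.\ the first K\"{a}hler identity rewritten), one gets $\{\bar\partial_E,\partial_E^\ast\} = \sqrt{-1}[\bar\partial_E^2,\Lambda]$. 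Now the integrability hypothesis $d_A\in\mathcal{A}^{1,1}_E$ is exactly $F_A^{2,0}=F_A^{0,2}=0$, and since decomposing $d_A^2=F_A$ by bidegree gives $\partial_E^2 = F_A^{2,0}$ and $\bar\partial_E^2 = F_A^{0,2}$, both terms vanish identically. This yields $\Delta_A = \Delta_{\partial_E}+\Delta_{\bar\partial_E}$.

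There is essentially no hard step here; the whole argument is bookkeeping with the graded Jacobi identity (already invoked in the proof of Lemma \ref{L3}) and the K\"{a}hler identities of Proposition \ref{P3}. The only point worth being careful about is the sign/conjugation convention in $[\Lambda,\partial_E] = \sqrt{-1}\bar\partial_E^\ast$ versus $[\Lambda,\bar\partial_E]=-\sqrt{-1}\partial_E^\ast$, to make sure the two mixed anticommutators really do come out as commutators with $\partial_E^2$ and $\bar\partial_E^2$ rather than with $F_A^{1,1}$. Once this is tracked correctly, the integrability condition cleanly kills the cross terms and delivers the decomposition.
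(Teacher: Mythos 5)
Your proof is correct and follows essentially the same route as the paper: expand $\De_{A}=[\pa_{E}+\bar{\pa}_{E},\pa_{E}^{\ast}+\bar{\pa}_{E}^{\ast}]$ into $\De_{\pa_{E}}+\De_{\bar{\pa}_{E}}$ plus the two mixed anticommutators, and kill the latter using the K\"{a}hler identities of Proposition \ref{P3} together with $\pa_{E}^{2}=F_{A}^{2,0}=0$ and $\bar{\pa}_{E}^{2}=F_{A}^{0,2}=0$. The only cosmetic difference is that the paper routes the cancellation through the graded Jacobi identity while you expand the anticommutator directly to exhibit $[\pa_{E}^{2},\La]$ and $[\bar{\pa}_{E}^{2},\La]$; your signs for $\bar{\pa}_{E}^{\ast}$ and $\pa_{E}^{\ast}$ are off from Proposition \ref{P3} by a factor of $-1$, but as you note this does not affect the vanishing of the cross terms.
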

	\begin{proof}
		Following the definitions of $\De_{A}$, $\De_{\bar{\pa}_{E}}$ and $\De_{\pa_{E}}$, we have
		\begin{equation*}
		\begin{split}
		\De_{A}&=[\pa_{E}+\bar{\pa}_{E},\pa^{\ast}_{E}+\bar{\pa}^{\ast}_{E}]\\
		&=[\pa_{E},\pa^{\ast}_{E}]+[\bar{\pa}_{E},\bar{\pa}^{\ast}_{E}]+[\pa_{E},\bar{\pa}^{\ast}_{E}]+[\bar{\pa}_{E},\pa^{\ast}_{E}]\\
		&=\De_{\bar{\pa}_{E}}+\De_{\pa_{E}}+[\pa_{E},\bar{\pa}^{\ast}_{E}]+[\bar{\pa}_{E},\pa^{\ast}_{E}].\\
		\end{split}
		\end{equation*}	
		Following identities on Proposition \ref{P3}, we have
		\begin{equation*}
		\begin{split}
		[\pa_{E},\bar{\pa}^{\ast}_{E}]&=[\pa_{E},-i[\La,\pa_{E}]]\\
		&=-i[\La,[\pa_{E},\pa_{E}]]-i[\pa_{E},[\pa_{E},\La]]\\
		&=i[\pa_{E},[\La,\pa_{E}]].\\
		\end{split}
		\end{equation*}
		Therefore,
		$$[\pa_{E},\bar{\pa}^{\ast}_{E}]=0.$$
		By the similar way, we also get 
		$$[\bar{\pa}_{E},\pa^{\ast}_{E}]=0.$$
		Therefore, we have
		$$\De_{A}=\De_{\bar{\pa}_{E} }+\De_{\pa_{E}}.$$
	\end{proof}
	\begin{proposition}\label{P2}
		Let $(X,\w)$ be a complete, K\"{a}hler manifold, $\dim_{\C}X=n$, with a $d$(bounded) K\"{a}hler form $\w$, i.e., there is a bounded $1$-form $\theta$ such that $\w=d\theta$, $D_{E}$ be the Chern connection on a holomorphic Hermitian vector bundle $E$ over $X$. Then for any $\a\in\Om^{p,q}_{(2)}(X,E)$, $(k:=p+q\neq n)$, such that $\De_{\bar{\pa}_{E} }\a\in L^{2}$, which satisfies the inequality
		\begin{equation*}
		(c(n,k)\|\theta\|^{-2}_{L^{\infty}(X)}-|[ i\Theta(E),\La]|)\|\a\|_{L^{2}(X)}\leq\langle\De_{\bar{\pa}_{E}}\a,\a\rangle_{L^{2}(X)}.
		\end{equation*}
		where $c_{n,k}>0$ is a constant which depends only on $n,k$. Furthermore. if  $$|[i\Theta(E),\La]|\leq c_{n}\|\theta\|^{-2}_{L^{\infty}(X)},$$ 
		where $c_{n}$ is a uniformly positive constant only depends on $n$ which satisfies $c_{n}<\inf c_{n,k}$, we then have
		$$\mathcal{H}^{p,q}_{(2);\bar{\pa}_{E}}(X,E)=0,\ \forall\ p+q\neq n.$$
	\end{proposition}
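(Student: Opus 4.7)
The plan is to reduce the bound for $\Delta_{\bar{\pa}_E}$ to the bound for the full Laplacian $\Delta_A$ already established in Theorem \ref{T3}, using the Bochner--Kodaira--Nakano identity as the bridge. The Chern connection $D_E$ is Hermitian with $F^{0,2}_{D_E}=F^{2,0}_{D_E}=0$, so $D_E\in\mathcal{A}^{1,1}_E$ and the preceding lemma applies, giving the decomposition
\begin{equation*}
\Delta_A=\Delta_{\bar{\pa}_E}+\Delta_{\pa_E}.
\end{equation*}
The Bochner--Kodaira--Nakano identity (an immediate consequence of the K\"ahler identities of Proposition \ref{P3}, expanding $\Delta_{\bar{\pa}_E}$ via $\bar\pa_E^*=-i[\Lambda,\pa_E]$ and the Jacobi identity) reads $\Delta_{\bar{\pa}_E}=\Delta_{\pa_E}+[i\Theta(E),\Lambda]$. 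Substituting into the decomposition of $\Delta_A$ yields
\begin{equation*}
\Delta_A=2\Delta_{\bar{\pa}_E}-[i\Theta(E),\Lambda],
\end{equation*}
which is the key algebraic identity I would carry through.

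Next, for $\a\in\Om^{p,q}_{(2)}(X,E)$ with $k=p+q\neq n$ and $\Delta_{\bar{\pa}_E}\a\in L^2$, I would apply Theorem \ref{T3} to $\a$ viewed as an $L^2$ $k$-form with values in $E$ (noting $\Delta_A\a$ is automatically in $L^2$ by the identity above and the boundedness of $[i\Theta(E),\Lambda]$ on $L^2$). This gives
\begin{equation*}
c_{n,k}\|\theta\|^{-2}_{L^\infty(X)}\|\a\|^2_{L^2(X)}\le\langle\Delta_A\a,\a\rangle_{L^2(X)}=2\langle\Delta_{\bar{\pa}_E}\a,\a\rangle_{L^2(X)}-\langle[i\Theta(E),\Lambda]\a,\a\rangle_{L^2(X)}.
\end{equation*}
Moving the curvature term to the left and estimating it crudely by the operator norm,
\begin{equation*}
|\langle[i\Theta(E),\Lambda]\a,\a\rangle_{L^2(X)}|\le |[i\Theta(E),\Lambda]|\,\|\a\|^2_{L^2(X)},
\end{equation*}
I arrive at
\begin{equation*}
\bigl(c_{n,k}\|\theta\|^{-2}_{L^\infty(X)}-|[i\Theta(E),\Lambda]|\bigr)\|\a\|^2_{L^2(X)}\le 2\langle\Delta_{\bar{\pa}_E}\a,\a\rangle_{L^2(X)},
\end{equation*}
which is the claimed estimate after absorbing the factor $2$ into the constant $c(n,k)$.

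For the second assertion, the hypothesis $|[i\Theta(E),\Lambda]|\le c_n\|\theta\|^{-2}_{L^\infty(X)}$ with $c_n<\inf_{k\neq n}c_{n,k}$ makes the coefficient on the left strictly positive. If $\a\in\mathcal{H}^{p,q}_{(2);\bar{\pa}_E}(X,E)$ with $p+q\neq n$, then the right-hand side vanishes, forcing $\|\a\|_{L^2}=0$.

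The only delicate point is making sure Theorem \ref{T3} genuinely applies to a $(p,q)$-form rather than just a real $k$-form: since $D_E\in\mathcal{A}^{1,1}_E$ preserves the bidegree decomposition of the Laplacian, both sides of the inequality in Theorem \ref{T3} split by bidegree, so the estimate transfers verbatim to each $\Om^{p,q}_{(2)}(X,E)$ component. Everything else is formal manipulation; there is no analytic obstacle beyond what is already overcome in Theorem \ref{T3}.
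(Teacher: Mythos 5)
Your proposal is correct and follows essentially the same route as the paper: both derive $\Delta_A=2\Delta_{\bar{\pa}_E}-[i\Theta(E),\Lambda]$ from the Bochner--Kodaira--Nakano identity together with the decomposition $\Delta_A=\Delta_{\bar{\pa}_E}+\Delta_{\pa_E}$, then invoke the spectral lower bound of Theorem \ref{T3} and estimate the curvature term by its operator norm. Your closing remark about the factor of $2$ and the applicability of Theorem \ref{T3} to $(p,q)$-forms is slightly more careful than the paper's own write-up, but the argument is the same.
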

	\begin{proof}
		Following the Bochner-Kodaira-Nakano formula \cite[Chapter VII., Corollary 1.3]{Dem} $$\De_{\bar{\pa}_{E}}=\De_{\pa_{E}}+[i\Theta(E),\La],$$
		we have
		$$\De_{E}=\De_{\bar{\pa}_{E}}+\De_{\pa_{E}}=2\De_{\bar{\pa}_{E}}-[i\Theta(E),\La],$$
		where $\De_{E}:=D_{E}D_{E}^{\ast}+D_{E}^{\ast}D_{E}$. Then for any $\a\in\Om^{p,q}_{(2)}(X,E)$, $(p+q\neq n)$, we have
		\begin{equation*}
		\begin{split}
		\langle\De_{E}\a,\a\rangle_{L^{2}(X)}&\leq 2\langle\De_{\bar{\pa}_{E}}\a,\a\rangle_{L^{2}(X)}+|\langle[i\Theta(E),\La]\a,\a\rangle_{L^{2}(X)}|\\
		&\leq2\langle\De_{\bar{\pa}_{E}}\a,\a\rangle_{L^{2}(X)}+|[ i\Theta(E),\La]|\cdot\|\a\|_{L^{2}(X)}|.\\
		\end{split}
		\end{equation*}
		We then have
		$$\langle\De_{\bar{\pa}_{E}}\a,\a\rangle_{L^{2}(X)}\geq (c(n,k)\|\theta\|^{-2}_{L^{\infty}(X)}-|[ i\Theta(E),\La]|)\|\a\|_{L^{2}(X)}.$$
		where $c(n,k)$ is a uniformly positive constant. 
		
		For any $k\neq n$, if 
		$$|[\La,i\Theta(E)]|\leq c_{n}\|\theta\|^{-2}_{L^{\infty}(X)}<c_{n,k}\|\theta\|^{-2}_{L^{2}(X)},$$
		then every $\a\in\mathcal{H}_{(2);\bar{\pa}_{E} }^{p,q}(X,E)$, we get
		$$0\leq (c(n,k)\|\theta\|^{-2}_{L^{\infty}(X)}-|[ i\Theta(E),\La]|)\|\a\|_{L^{2}(X)}\leq 0,$$
		i.e., $\a=0$.  We complete the proof of this theorem. 
	\end{proof}
	A compact K\"{a}hler manifold $(X,J,\w)$ with sectional curvature bounded form above by a negative constant, i.e., $sec\leq -K$ for some $K>0$. We denote by $(\tilde{X},\tilde{J},\tilde{\w})$ the universal covering space of $(X,J,\w)$. Since $\pi$ is local isometry, the sectional curvature of $\tilde{X}$ also bounded form above by the negative constant $K$. By \cite[Lemma 3.2]{CY}, there exists $1$-form $\theta$ on $\tilde{X}$ such that  
	$$\tilde{\w}=d\theta$$
	and
	$$ \|\theta\|_{L^{\infty}(\tilde{X})}\leq \sqrt{n}K^{-\frac{1}{2}}.$$
	\begin{corollary}\label{C2}
		Let $(X,\w)$ be a  compact K\"{a}hler manifold with sectional curvature bounded from above by a negative constant, i.e.,
		$$sec\leq -K,$$ 
		for some $K>0$.  Let $E$ be a holomorphic vector bundle on $X$, $D_{E}$ be the  Chern connection on $E$. Let $\pi:(\tilde{X},\tilde{g}\rightarrow(X,g)$ be the universal covering with $\tilde{g}=\pi^{\ast}g$, $\tilde{E}=\pi^{\ast}E$ the pull back bundle over $\tilde{X}$. Then for any $\a\in\Om^{p,q}_{(2)}(\tilde{X},\tilde{E})$ such that $\De_{\bar{\pa}_{\tilde{E}} }\a\in L^{2}$, which satisfies the inequality
		\begin{equation*}
		(c(n,k)K/n-|[ \La, i\Theta(E)]|)\|\a\|_{L^{2}(X)}\leq\langle\De_{\bar{\pa}_{\tilde{E}}}\a,\a\rangle_{L^{2}(\tilde{X})}.
		\end{equation*}
		where $c_{n,k}>0$ is a constant which depends only on $n,k$. Furthermore. if  $$|[\La, i\Theta(E)]|\leq c_{n}K,$$
		where $c_{n}$ is a  positive constant only depends on $n$ which satisfies $c_{n}<\inf c_{n,k}/n$, we then have
		$$\mathcal{H}^{p,q}_{(2);\bar{\pa}_{\tilde{E}}}(\tilde{X},\tilde{E})=0,\ \forall\ p+q\neq n.$$
	\end{corollary}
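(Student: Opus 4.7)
The proof is essentially a direct transplantation of Proposition \ref{P2} from the setting of a complete K\"{a}hler manifold with $d$(bounded) K\"{a}hler form to the universal cover $(\tilde{X},\tilde{\w})$. First I would verify that the hypotheses of Proposition \ref{P2} are satisfied on $\tilde{X}$. Since $X$ is compact and $\pi:\tilde{X}\to X$ is a Riemannian covering with $\tilde{g}=\pi^{\ast}g$, the manifold $(\tilde{X},\tilde{g})$ is complete, and $\tilde{\w}=\pi^{\ast}\w$ makes it a K\"{a}hler manifold. The pullback $\tilde{E}=\pi^{\ast}E$ is a holomorphic Hermitian vector bundle whose Chern connection is $\pi^{\ast}D_{E}\in\mathcal{A}^{1,1}_{\tilde{E}}$, and its curvature equals $\Theta(\tilde{E})=\pi^{\ast}\Theta(E)$. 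Because $\pi$ is a local isometry, the operator norm $|[\La,i\Theta(\tilde{E})]|$ computed pointwise on $\tilde{X}$ coincides with $|[\La,i\Theta(E)]|$ computed on $X$.

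The next step is to supply a $d$(bounded) primitive of $\tilde{\w}$. The cited estimate \cite[Lemma 3.2]{CY} produces a $1$-form $\theta\in\Om^{1}(\tilde{X})$ with $\tilde{\w}=d\theta$ and $\|\theta\|_{L^{\infty}(\tilde{X})}\leq\sqrt{n}K^{-1/2}$, so that $\|\theta\|_{L^{\infty}(\tilde{X})}^{-2}\geq K/n$. Feeding this into the inequality of Proposition \ref{P2} applied to $(\tilde{X},\tilde{E})$ immediately yields
\begin{equation*}
\bigl(c(n,k)K/n-|[\La,i\Theta(E)]|\bigr)\|\a\|_{L^{2}(\tilde{X})}\leq\langle\De_{\bar{\pa}_{\tilde{E}}}\a,\a\rangle_{L^{2}(\tilde{X})}
\end{equation*}
for every $\a\in\Om^{p,q}_{(2)}(\tilde{X},\tilde{E})$ with $k=p+q\neq n$ and $\De_{\bar{\pa}_{\tilde{E}}}\a\in L^{2}$.

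For the vanishing part, I would choose $c_{n}$ strictly smaller than $\inf_{k\neq n}c(n,k)/n$; then the hypothesis $|[\La,i\Theta(E)]|\leq c_{n}K$ makes the coefficient on the left-hand side strictly positive for every $k\neq n$. Specializing to $\a\in\mathcal{H}^{p,q}_{(2);\bar{\pa}_{\tilde{E}}}(\tilde{X},\tilde{E})$ with $p+q\neq n$ forces the right-hand side to vanish, hence $\|\a\|_{L^{2}(\tilde{X})}=0$. No genuine obstacle arises in this argument; the only point demanding attention is the invariance of $|[\La,i\Theta(E)]|$ under lifting, which is immediate because $\pi$ is a local isometry and $[\La,i\Theta(\cdot)]$ is a pointwise-defined algebraic operator on bundle-valued forms, so the supremum defining its operator norm is unchanged by passing to the covering.
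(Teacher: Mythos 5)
Your proposal is correct and follows exactly the route the paper intends: the corollary is obtained by applying Proposition \ref{P2} to $(\tilde{X},\tilde{\w},\tilde{E})$, using the primitive $\theta$ with $\|\theta\|_{L^{\infty}(\tilde{X})}\leq\sqrt{n}K^{-1/2}$ from \cite[Lemma 3.2]{CY} so that $\|\theta\|_{L^{\infty}}^{-2}\geq K/n$, together with the observation that $|[\La,i\Theta(\tilde{E})]|=|[\La,i\Theta(E)]|$ since $\pi$ is a local isometry. Your explicit verification of completeness of $\tilde{X}$ and of the pointwise nature of $[\La,i\Theta(\cdot)]$ only makes the paper's implicit argument more careful.
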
	
	\subsection{Nonvanishing results}
	In \cite{Gro}, Gromov proved a nonvanishing for $p+q=dim_{\C}X$ follows from the $L^{2}$-index theorem and an upper bound for the bottom of the spectrum \cite[Main Theorem]{Gro}.  A special case of a conjecture of Hopf follows from the main theorem. Namely, the Euler characteristic $\chi(X)$ of a compact, negatively curved K\"{a}hler manifold $X$ of complex dimension $n$ satisfies $sign\chi(X)=(-1)^{n}$. Let $E$ be a holomorphic  vector bundle equipped with a Hermitian metric and Hermitian connection $d_{A}$ over a compact K\"{a}hler manifold. Suppose $X$ is a compact K\"{a}hler manifold with underlying Riemann metric $g$ . We denote by $\na^{g}$ the Hermitian connections induced by the Levi-Civita connection on $\Om^{\bullet,\bullet}TX$. Let $D_{E}$ be the Chern connection on $E$. Thus the twist bundle $\Om^{p,0}TX\otimes E$ is also a holomorphic vector bundle on $X$. We denote by $\chi^{p}(X,E)$ the index of the operator 
	$$\mathcal{D}_{p}=\bar{\pa}_{E}+\bar{\pa}_{E}^{\ast}:\Om^{p,\ast}(X,E)\rightarrow\Om^{p,\ast\pm1}(X,E).$$ 
	By definition 
	\begin{equation*}
	\begin{split}
	\chi^{p}(X,E)&=Index(\mathcal{D}_{p})\\
	&=\dim_{\C}(\ker\mathcal{D}_{p})-\dim_{\C}(\rm{coker}\mathcal{D}_{p})\\
	&=\dim_{\C}\oplus_{q\ even}\mathcal{H}^{p,q}_{\bar{\pa}_{E}}-\dim_{\C}\oplus_{q\ odd}\mathcal{H}^{p,q}_{\bar{\pa}_{E}}\\
	&=\sum_{q=0}^{n}(-1)^{q}h^{p,q}(X,E),\\
	\end{split}
	\end{equation*}
	where   
	$$\mathcal{H}^{p,q}_{\bar{\pa}_{E}}=\{\a\in\Om^{p,q}(X,E):\mathcal{D}_{p}\a=0  \}$$ are the spaces of $\bar{\pa}_{E}$-harmonic forms and $h^{p,q}(X,E):=\dim\mathcal{H}^{p,q}_{\bar{\pa}_{E}}$ the Hodge numbers of $(X,E)$. 
	In particular, $\chi^{0}(X,E)$ called the \textit{Euler-Poincar\'{e}} characteristic \cite[Section 5]{Huy}. The Hirzebruch-Riemann-Roch theorem gives 
	$$\chi^{p}(X,E)=\int_{X}td(X)ch(\Om^{p,0}TX\otimes E)=\int_{X}td(X) ch(\Om^{p,0}TX)ch(E).$$
	
	Given a compact $n$-dimensional manifold $X$, one can associate polynomial $\chi_{y}(X)$, called the Hirzebruch $\chi_{y}$-genus, in terms of their Hodge number 
	$$h^{p,q}(X):=\dim\mathcal{H}^{p,q}_{\bar{\pa}}(X)$$ as follows:
	$$\chi_{y}(X):=\sum_{p=0}^{n}\chi^{p}(X)\cdot y^{p}:=\sum_{p=0}^{n}[\sum_{q=0}^{n}(-1)^{q}h^{p,q}(X)]y^{p}.$$
	where $\chi^{p}(X):=\sum_{q=0}^{n}(-1)^{q}h^{p,q}(X)$ $(0\leq p\leq n)$. The $\chi^{p}(X)$-genus was first introduced by Hirzebruch \cite{Hir}.  On a holomorphic bundle over compact complex manifold, we also define a polynomial as follows:
	$$\chi_{y}(X,E):=\sum_{p=0}^{n}\chi^{p}(X,E)\cdot y^{p}=\sum_{p=0}^{n}[\sum_{q=0}^{n}(-1)^{q}h^{p,q}(X,E)]y^{p}.$$
	The general form of the Hirzebruch-Riemann-Roch theorem, which is a corollary of the Atiyah-Singer index theorem, allows us to compute $\chi_{y}(X,E)$ in terms of the Chern numbers of $X,E$ as follows:
	$$\chi_{y}(X,E)=\int_{X}td(X)ch(\oplus_{p=0}^{n}\Om^{p,0}(TX)y^{p})ch(E).$$
	Let $\gamma_{i}$ denote the formal Chern roots of $TX$ (see \cite[Corollary 5.14]{Huy}), i.e., $i$-th elementary symmetric polynomial of $\gamma_{1},\cdots,\gamma_{n}$ represents the $i$-th Chern class of $(X,J)$:
	$$c_{1}=\gamma_{1}+\cdots+\gamma_{n},\ \ c_{2}=\sum_{1\leq i<j\leq n}\gamma_{i}\gamma_{j},\cdots,\ c_{n}=\gamma_{1}\cdots\gamma_{n}.$$
	Then
	$$td(X)=\prod_{i=1}^{n}\frac{\gamma_{i}}{1-e^{-\gamma_{i}}}$$ and
	$$ch(\oplus_{p=0}^{n}\Om^{p,0}(TX)y^{p})=\prod_{i=1}^{n}(1+ye^{-\gamma_{i}}).$$
	\begin{proposition}\label{P1}
		$$\chi_{y}(X,E)=\int_{X}ch(E)\prod_{i=1}^{n}(1+ye^{-\gamma_{i}})\frac{\gamma_{i}}{1-e^{-\gamma_{i}}}.$$
		In particular, If $E$ is a flat bundle, then
		$$\chi_{y}(X,E)=rank(E)\int_{X}\prod_{i=1}^{n}(1+ye^{-\gamma_{i}})\frac{\gamma_{i}}{1-e^{-\gamma_{i}}}=rank(E)\chi_{y}(X).$$
	\end{proposition}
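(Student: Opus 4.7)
The plan is to deduce the identity from the Hirzebruch--Riemann--Roch formula that is already recorded in the excerpt, together with standard splitting-principle computations for the Chern character of exterior powers. Concretely, starting from
$$\chi^{p}(X,E)=\int_{X}td(X)\,ch(\Om^{p,0}TX)\,ch(E),$$
I would multiply both sides by $y^{p}$ and sum over $p=0,1,\dots,n$. Since $td(X)$ and $ch(E)$ do not depend on $p$, they can be pulled outside the sum, yielding
$$\chi_{y}(X,E)=\int_{X}td(X)\,ch(E)\sum_{p=0}^{n}y^{p}ch(\Om^{p,0}TX).$$

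The next step is purely algebraic in the cohomology ring. Using the splitting principle, if $\gamma_{1},\dots,\gamma_{n}$ are the formal Chern roots of $TX$, then the Chern roots of the holomorphic cotangent bundle $\Om^{1,0}TX$ are $-\gamma_{1},\dots,-\gamma_{n}$, so
$$ch\bigl(\Om^{p,0}TX\bigr)=ch\bigl(\Lambda^{p}\Om^{1,0}TX\bigr)=\sum_{i_{1}<\cdots<i_{p}}e^{-\gamma_{i_{1}}}\cdots e^{-\gamma_{i_{p}}},$$
i.e.\ the $p$-th elementary symmetric function in $e^{-\gamma_{1}},\dots,e^{-\gamma_{n}}$. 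The classical generating-function identity for elementary symmetric polynomials then gives
$$\sum_{p=0}^{n}y^{p}ch(\Om^{p,0}TX)=\prod_{i=1}^{n}\bigl(1+ye^{-\gamma_{i}}\bigr).$$
Plugging this and the expression $td(X)=\prod_{i=1}^{n}\gamma_{i}/(1-e^{-\gamma_{i}})$ stated in the excerpt into the previous display immediately yields the asserted formula for $\chi_{y}(X,E)$.

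For the flat bundle statement, I would invoke Chern--Weil theory: a flat Hermitian connection $\Gamma$ on $E$ has curvature $F_{\Gamma}\equiv 0$, so every Chern form $c_{i}(E,\Gamma)$ vanishes for $i\geq 1$. Hence $ch(E)=rank(E)\cdot 1\in H^{*}(X,\mathbb{C})$, the scalar can be pulled out of the integral, and the remaining integrand is exactly the Hirzebruch--Riemann--Roch integrand that represents $\chi_{y}(X)$ (apply the general formula with $E$ trivial of rank one). This gives $\chi_{y}(X,E)=rank(E)\,\chi_{y}(X)$.

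There is essentially no analytic obstacle here: the result is a direct corollary of HRR and the splitting principle. The only point deserving mild care is the sign convention for the Chern roots of $\Om^{1,0}TX$ versus $TX$, and the observation that the formal splitting into line bundles is compatible with additivity of the Chern character and multiplicativity of exterior powers. Once these bookkeeping points are handled cleanly, the two claims follow in one line each.
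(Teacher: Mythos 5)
Your argument is correct and essentially identical to the paper's: the displayed formula is obtained exactly as you describe, by summing the Hirzebruch--Riemann--Roch expressions against $y^{p}$ and substituting the splitting-principle identities $td(X)=\prod_{i}\gamma_{i}/(1-e^{-\gamma_{i}})$ and $\sum_{p}y^{p}ch(\Om^{p,0}TX)=\prod_{i}(1+ye^{-\gamma_{i}})$, which the paper records immediately before the proposition. For the flat case the paper makes the same Chern--Weil point, just phrased at the level of forms --- it writes $ch(E)=rank(E)+d\eta$ and discards the exact term by Stokes against the closed form $td(X)\,ch(\oplus_{p}\Om^{p,0}(TX)y^{p})$, which is precisely your observation that only the cohomology class $[ch(E)]=rank(E)$ enters the integral.
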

	\begin{proof}
		If $E$ is a flat bundle, there exists a flat connection $d_{\Gamma}$ on the Hermitian vector bundle $E$. We can write the connection $d_{A}=d_{\Gamma}+a$, where $a$ is a $1$-form take value in $End(E)$. Therefore, $F_{A}=d_{\Gamma}a+a\wedge a$. Then, $[ch(E)]=[Tr(\exp \frac{\textit{i}}{2\pi}F_{A})]=rank(E)$, i.e, there exists a differential form $\eta$ such $$ch(E)=rank(E)+d\eta.$$
		Noting that $d(td(X)ch(\Om^{p,0}(X))=0$. We then have, 
		\begin{equation}\nonumber
		\begin{split}
		\chi_{y}(X,E)&=\int_{X}td(X)ch(\oplus_{p=0}^{n}\Om^{p,0}(TX)y^{p})(rank(E)+d\eta)\\
		&=rank(E)\int_{X}td(X)ch(\oplus_{p=0}^{n}\Om^{p,0}(TX)y^{p})+\int_{X}d(td(X)ch(\oplus_{p=0}^{n}\Om^{p,0}(TX)y^{p})\wedge\eta)\\
		&=rank(E)\chi_{y}(X).
		\end{split}
		\end{equation}
	\end{proof}
	\begin{remark}
		The $\chi_{y}$-genus famously satisfies
		$$\chi_{y}(X)=(-y)^{n}\cdot\chi_{y^{-1}}(X),$$
		which are equivalent to the relations $\chi^{p}(X)=(-1)^{n}\chi^{n-p}(X)$ and can be derived from the Serre duality for the Hodge number:
		\begin{equation*}
		\begin{split}
		\chi^{p}(X)&=\sum_{q=0}^{n}(-1)^{q}h^{p,q}(X)=\sum_{q=0}^{n}(-1)^{q}h^{n-p,n-q}(X)\\
		&=(-1)^{n}\sum_{q=0}^{n}(-1)^{q}h^{n-p,q}(X)=(-1)^{n}\chi^{n-p}(X).\\
		\end{split}
		\end{equation*}
		But for any holomorphic vector bundle $E$ over a compact complex manifold $X$ there exists $\mathbb{C}$-linear isomorphisms (Serre duality \cite[Corollary 4.1.16]{Huy}): 
		$$\mathcal{H}^{p,q}_{\bar{\pa}_{E}}(X,E)\cong\mathcal{H}^{n-p,n-q}_{\bar{\pa}_{E}}(X,E^{\ast}),$$
		so $\chi_{y}(X,E)$ always cannot satisfies $\chi_{y}(X,E)=(-y)^{n}\cdot\chi_{y^{-1}}(X,E)$.  
		
		We also observe that 
		$$\chi_{y}(X,E)|_{y=0}=\chi^{0}(X,E)=\int_{X}td(X)ch(E)$$ 
		and
		$$\chi_{y}(X,E)|_{y=-1}=\int_{X}ch(E)\prod_{i=1}^{n}\gamma_{i}=rank(E)\chi(X).$$ 
	\end{remark}
	Let $X$ be a Riemannian manifold and $\Ga$ a discrete group of isometrics of $X$, such that the differential operator $\mathcal{D}$ commutes with the action of $\Ga$. This presupposes that the action of $\Ga$ lifts to the pertinent bundles $E$ and $E'$, and then the commutation between the actions of $\Ga$ on sections of $E$ and $E'$ and $\mathcal{D}: C^{\infty}(E)\rightarrow C^{\infty}(E')$ makes sense. A trivial example is that of Galois action for a covering map $X\rightarrow X_{0}$, where $\mathcal{D}$ is the pull back from an operator on $X_{0}$. We consider a $\Ga$-invariant Hermitian line bundle $(L,\na)$ on $X$ we assume $X/\Ga$ is compact, and we state Atiyah's $L^{2}$-index theorem for $\mathcal{D}\otimes\na$.
	\begin{theorem}\cite[Theorem 2.3.A]{Gro}\label{T7}
		Let $\mathcal{D}$ be a first-order elliptic operator. Then there exists a closed nonhomogeneous form
		$$I_{\mathcal{D}}=I^{0}+I^{1}+\cdots+I^{n}\in\Om^{\ast}(X)=\Om^{0}\oplus\Om^{1}\oplus\cdots\oplus\Om^{n}$$
		invariant under $\Ga$, such that the $L^{2}$-index of the twisted operator $\mathcal{D}\otimes\na$ satisfies
		\begin{equation}\label{E11}
		L^{2}Index_{\Ga}(\mathcal{D}\otimes\na)=\int_{X/\Ga}I_{\mathcal{D}}\wedge\exp{[\w]},
		\end{equation}
		where $[\w]$ is the Chern form of $\na$, and
		$$\exp{[\w]}=1+[\w]+\frac{[\w]\wedge[\w]}{2!}+\frac{[\w]\wedge[\w]\wedge[\w]}{3!}+\cdots.$$
	\end{theorem}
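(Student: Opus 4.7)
The plan is to combine Atiyah's $L^{2}$-index theorem for $\Gamma$-coverings with the heat-kernel proof of the local Atiyah-Singer index theorem and the multiplicativity of the Chern character. I would begin by setting up the framework: $\mathcal{D}\otimes\nabla$ is a $\Gamma$-invariant first-order elliptic operator on $X$, and $\Gamma$-cocompactness ($X/\Gamma$ compact) guarantees bounded geometry, so that the heat kernels $k_{t}^{\pm}(x,y)$ associated with $(\mathcal{D}\otimes\nabla)^{\ast}(\mathcal{D}\otimes\nabla)$ and $(\mathcal{D}\otimes\nabla)(\mathcal{D}\otimes\nabla)^{\ast}$ exist, are smoothing, and satisfy uniform Gaussian off-diagonal estimates.

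The central analytic input is the $\Gamma$-McKean-Singer formula from Atiyah's $L^{2}$-index theorem: for every $t>0$,
\[
L^{2}\mathrm{Index}_{\Gamma}(\mathcal{D}\otimes\nabla) = \int_{X/\Gamma}\mathrm{str}\,k_{t}(x,x)\,d\mathrm{vol}(x),
\]
where $k_{t}=k_{t}^{+}-k_{t}^{-}$. The integrand is $\Gamma$-invariant by the equivariance $k_{t}(\gamma x,\gamma y)=\gamma\circ k_{t}(x,y)\circ\gamma^{-1}$, so it descends to a function on $X/\Gamma$ and the integral is finite.

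I would then take $t\to 0^{+}$ and invoke the local index theorem (via Getzler rescaling or Patodi cancellation). The short-time asymptotic expansion should produce
\[
\lim_{t\to 0^{+}}\mathrm{str}\,k_{t}(x,x)\,d\mathrm{vol}(x) = I_{\mathcal{D}}(x)\wedge\exp[\omega(x)],
\]
with $I_{\mathcal{D}}$ depending only on the principal symbol of $\mathcal{D}$ and the local Riemannian data, and $\exp[\omega]$ being the Chern character of $(L,\nabla)$. The multiplicative splitting reflects the identity $\mathrm{ch}(E\otimes L)=\mathrm{ch}(E)\mathrm{ch}(L)$ at the level of heat-kernel invariants. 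Because every ingredient entering $I_{\mathcal{D}}$ is $\Gamma$-invariant, so is $I_{\mathcal{D}}$; closedness follows from the standard fact that index densities are closed. The nonhomogeneous decomposition $I_{\mathcal{D}}=I^{0}+I^{1}+\cdots+I^{n}$ is essential, since each component $I^{k}$ pairs with $[\omega]^{n-k}/(n-k)!$ inside $\exp[\omega]$ to produce the top-degree contribution on $X/\Gamma$.

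The hardest step will be the analytic justification: proving the $L^{2}$-McKean-Singer identity on the non-compact cover $X$, including the $\Gamma$-trace class property of $e^{-t\Delta}$ in the $\Gamma$-von Neumann algebra sense, $t$-independence of the right-hand side, and the interchange of $\lim_{t\to 0^{+}}$ with integration over $X/\Gamma$. All of these are made possible by the uniform heat kernel asymptotics afforded by the bounded geometry inherited from cocompactness. Once these ingredients are in place, integrating the pointwise limit over a fundamental domain yields (\ref{E11}).
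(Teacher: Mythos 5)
The paper does not actually prove this statement: it is quoted verbatim from Gromov (Theorem 2.3.A of \cite{Gro}) and used as a black box, so there is no internal argument to compare yours against. Judged on its own terms, your heat-kernel outline is the standard modern route and is essentially sound: the $\Gamma$-McKean--Singer identity, the $t$-independence of the $\Gamma$-supertrace, and the passage to the $t\to 0^{+}$ limit are exactly the ingredients of Atiyah's proof combined with the local index theorem, and the factor $\exp[\w]$ does arise as the Chern character of the twisting line bundle.

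Two points deserve flagging. First, in the version stated here the $\Ga$-action is assumed to lift to $(L,\na)$, so $L$ and $\mathcal{D}\otimes\na$ descend to $X/\Ga$; the formula then follows in two lines from Atiyah's $L^{2}$-index theorem (the $L^{2}$-index upstairs equals the ordinary index downstairs) together with the classical Atiyah--Singer theorem on the compact quotient, with no heat-kernel analysis on the noncompact cover required. Your heavier machinery is what one genuinely needs for the extension discussed in the remark following the theorem (the Vafa--Witten rescaling $\a[\w]$, where the twisting bundle need not exist on $X/\Ga$), so it is not wasted effort, but you should say which version you are proving. Second, and more substantively: the pointwise identity $\lim_{t\to0^{+}}\mathrm{str}\,k_{t}(x,x)=I_{\mathcal{D}}\wedge\exp[\w]$ via Getzler rescaling or Patodi cancellation is a theorem about Dirac-type operators, not about arbitrary first-order elliptic operators. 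For a general first-order elliptic $\mathcal{D}$ the constant term of the heat expansion is local, but it is not automatic that it is a closed characteristic form, and the factorization with an $I_{\mathcal{D}}$ independent of $\na$ is precisely the content that must be established (via Gilkey's invariance theory, or by reduction to the classical elliptic complexes). Since the only operator used in this paper is $\bar{\pa}_{E}+\bar{\pa}_{E}^{\ast}$, which is of Dirac type, the gap is harmless in context, but as written your argument claims more than the tools you invoke actually deliver.
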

	\begin{remark}
		(1)  $L^{2}Index_{\Ga}(\mathcal{D}\otimes\na)\neq 0$ implies that
		either $\mathcal{D}\otimes\na$ or its adjoint has a non-trivial $L^{2}$-kernel.\\
		(2) The operator $\mathcal{D}$ used in the present paper is the operator $\bar{\pa}_{E}+\bar{\pa}_{E}^{\ast}$. In this case the $I_{0}$-component of $I_{\mathcal{D}}$ is non-zero. Hence $\int_{X/\Ga}I_{\mathcal{D}}\wedge\exp{\a[\w]}\neq0$, for almost all $\a$, provided the curvature form $[\w]$ is ``homologically nonsingular" $\int_{X/\Ga}[\w]^{n}\neq0$.
	\end{remark}
	We may start with $\Ga$ acting on $(L,\na)$ and then pass (if the topology allows) to the $k$-th root $(L,\na)^{\frac{1}{k}}$ of $(L,\na)$ for some $k>2$. Since the bundle $(L,\na)^{\frac{1}{k}}$ is only defined up to an isomorphism, the action of $\Ga$ does not necessarily lift to $L$. Yet there is a larger group $\Ga_{k}$ acting on $(L,\Ga)$, where $0\rightarrow\mathbb{Z}/k\mathbb{Z}\rightarrow\Ga_{k}\rightarrow\Ga\rightarrow1$. In the general case where $\w(\na)$ is $\Ga$-equivariant, the action of $\Ga$ on $(L,\na)$ is defined up to the automorphism group of $(L,\na)$ which is the circle group $S^{1}=\mathbb{R}/\mathbb{Z}$ as we assume $X$ is connected. Thus we have a non-discrete group, say $\bar{\Ga}$, such that $1\rightarrow S^{1}\rightarrow\bar{\Ga}\rightarrow\Ga\rightarrow1$, and such that the action of $\Ga$ on $X$ lifts to that of $\Ga$ on $(L,\na)$. This gives us the action of $\bar{\Ga}$ on the spaces of sections of $E\otimes L$ and $E'\otimes L$, and we can speak of the $\bar{\Ga}$-dimension of $\ker(\mathcal{D}\otimes\na)$ and $\rm{coker}(\mathcal{D}\otimes\na)$. The proof by Atiyah of the $L^{2}$-index theorem does not change a bit, and the formula (\ref{E11}) remains valid with $\bar{\Ga}$ in place of $\Ga$,
	$$L^{2}Index_{\bar{\Ga}}(\mathcal{D}\otimes\na)=\int_{X/\Ga}I_{\mathcal{D}}\wedge\exp[\w].$$
	Here again, the relevant fact is the implication
	$$\int_{X/\Gamma}I_{\mathcal{D}}\exp[\w]>0\Rightarrow\ker\mathcal{D}\otimes\na\neq0.$$
	Gromov defined the lower spectral bound $\la_{0}=\la_{0}(\mathcal{D})\geq 0$ as the upper bounded of the negative numbers $\la$, such that $\|\mathcal{D}e\|_{L^{2}}\geq\la\|e\|_{L^{2}}$ for those sections $e$ of $E$ where $\mathcal{D}e$ in $L^{2}$. Let $\mathcal{D}$ be a $\Ga$-invariant elliptic operator on $X$ of the first order, and let $I_{\mathcal{D}}=I^{0}+I^{1}+\cdots+I^{n}\in\Om^{\ast}(X)$ be the corresponding index form on $X$. Let $\w$ be a closed $\Ga$-invariant $2$-form on $X$ and denote by $I_{\a}^{n}$ the top component of product $I_{\mathcal{D}}\wedge\exp{\a\w}$, for $\a\in\mathbb{R}$. Hence $I_{\a}^{n}$ is an $\Ga$-invariant $n$-form on $X$, $\dim X=n$ depending on parameter $\a$.
	\begin{theorem}(\cite[2.4.A. Theorem]{Gro})\label{T6}
		Let $H^{1}_{dR}(X)=0$ and let $X/\Ga$ be compact and $\int_{X/\Ga}I_{\a}^{n}\neq 0$, for some $\a\in\mathbb{R}$. If the form $\w$ is $d$(bounded), then either $\la_{0}(\mathcal{D})=0$ or $\la_{0}(\mathcal{D}^{\ast})=0$, where $\mathcal{D}^{\ast}$ is the adjoint operator.
	\end{theorem}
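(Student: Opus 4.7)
The plan is to apply Gromov's Vafa--Witten twisting strategy: for each real parameter $\a$, construct a Hermitian line bundle $L_\a$ carrying a connection $\na_\a$ whose curvature equals $\a\w$, to which the $\Ga$-action lifts (possibly only through a central $S^1$-extension $\bar\Ga\to\Ga$), and then compute the $\bar\Ga$-invariant $L^2$-index of the twisted operator $\mathcal{D}\otimes\na_\a$ by Theorem \ref{T7}. The conclusion will follow by playing two facts against each other: the index is a nonzero polynomial in $\a$, while a bounded perturbation estimate forces the twisted index to vanish for small $|\a|$ unless $\mathcal{D}$ or $\mathcal{D}^*$ already has spectrum reaching down to $0$.

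The first step is the construction. Since $\w=d\theta$ with $\theta$ bounded, set $\na_\a=d+i\a\theta$ on the trivial Hermitian line bundle over $X$; its curvature is $i\a\w$. The hypothesis $H^1_{dR}(X)=0$ ensures that for every $\gamma\in\Ga$ the closed $1$-form $\gamma^{\ast}\theta-\theta$ is exact, say $\gamma^{\ast}\theta-\theta=df_{\gamma}$, so that $\gamma$ acts on sections by the gauge transformation $e^{i\a f_{\gamma}}$. Composition of these lifts gives the circle extension $1\to S^1\to\bar\Ga\to\Ga\to 1$ acting on $(L_\a,\na_\a)$ that was described in the paragraph preceding the theorem. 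The $L^2$-index theorem then yields
\begin{equation*}
L^2\text{-}\mathrm{Index}_{\bar\Ga}(\mathcal{D}\otimes\na_\a)=\int_{X/\Ga}I_{\mathcal{D}}\wedge\exp(\a[\w])=\int_{X/\Ga}I_{\a}^{n}=:P(\a),
\end{equation*}
which is a polynomial in $\a$. By assumption $P(\a_0)\neq 0$ for some $\a_0$, so $P$ is a nonzero polynomial and $P(\a)\neq 0$ for all $\a$ outside a finite set; in particular, $P(\a)\neq 0$ for arbitrarily small $|\a|$.

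Next comes the perturbation step. Since $L_\a$ is topologically trivial, the bundle $E\otimes L_\a$ is isometric to $E$ and under this identification $\mathcal{D}\otimes\na_\a=\mathcal{D}+\a\,T$, where $T$ is the zeroth-order operator given by the principal-symbol action of $\theta$, bounded by $\|T\|_{\mathrm{op}}\le C\|\theta\|_{L^{\infty}(X)}$. Hence for every section $e$ with $\mathcal{D}e\in L^2$,
\begin{equation*}
\|(\mathcal{D}\otimes\na_\a)e\|_{L^2}\;\geq\;\|\mathcal{D}e\|_{L^2}-C|\a|\|\theta\|_{L^{\infty}(X)}\|e\|_{L^2},
\end{equation*}
so $\la_0(\mathcal{D}\otimes\na_\a)\geq \la_0(\mathcal{D})-C|\a|\|\theta\|_{L^{\infty}(X)}$, and similarly for the adjoint. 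Assume towards contradiction that $\la_0(\mathcal{D})>0$ and $\la_0(\mathcal{D}^{\ast})>0$. Choosing $|\a|$ small enough (and with $P(\a)\neq 0$) makes both $\la_0(\mathcal{D}\otimes\na_\a)$ and $\la_0((\mathcal{D}\otimes\na_\a)^{\ast})$ strictly positive, so the $\bar\Ga$-dimensions of their $L^2$-kernels vanish and $P(\a)=L^2\text{-}\mathrm{Index}_{\bar\Ga}(\mathcal{D}\otimes\na_\a)=0$, contradicting $P(\a)\neq 0$. Therefore $\la_0(\mathcal{D})=0$ or $\la_0(\mathcal{D}^{\ast})=0$. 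The main obstacle is not the polynomial argument but the careful bookkeeping of the $\bar\Ga$-equivariant structure on $L_\a$ for non-integer $\a$ and the verification that the $L^2$-index theorem and the definition of $\la_0$ behave functorially under the bounded perturbation $\a T$ in this non-discrete, von-Neumann-dimension setting.
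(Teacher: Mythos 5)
The paper does not actually prove this statement --- it is quoted directly from Gromov's \cite[2.4.A. Theorem]{Gro} and used as a black box --- so there is no internal proof to compare against; your reconstruction is precisely Gromov's Vafa--Witten twisting argument, and it is correct: the trivial line bundle with connection $d+i\a\theta$, the lift of the $\Ga$-action through the central $S^{1}$-extension $\bar{\Ga}$ (using $H^{1}_{dR}(X)=0$ to solve $\gamma^{\ast}\theta-\theta=df_{\gamma}$), the identification of the $\bar{\Ga}$-index with the polynomial $P(\a)=\int_{X/\Ga}I^{n}_{\a}$, and the zeroth-order perturbation bound $\|\a T\|\leq C|\a|\|\theta\|_{L^{\infty}(X)}$ combine exactly as you describe to force $P\equiv 0$ near $\a=0$ (hence everywhere) if both $\la_{0}(\mathcal{D})>0$ and $\la_{0}(\mathcal{D}^{\ast})>0$. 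The only detail worth flagging is the normalization of the Chern form of $\na_{\a}$ (a constant multiple of $\a\w$), which merely rescales the parameter and does not affect the finitely-many-roots argument.
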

	
	We then have 
	\begin{theorem}\label{T3.12}
		Let $(X,\w)$ be a  compact K\"{a}hler manifold with sectional curvature bounded from above by a negative constant, i.e.,
		$$sec\leq -K,$$ 
		for some $K>0$. Let $E$ be a holomorphic vector bundle on $X$, $D_{E}$ be the  Chern connection on $E$. If the curvature $\Theta(E)$ of $D_{E}$ such that 
		$$|[\La,i\Theta(E)]|\leq c(n)K,$$
		then the spaces of $L^{2}$ $\De_{\bar{\pa}_{\tilde{E}}}$-harmonic $(p,q)$-forms on the lifting bundle $\tilde{E}$ satisfy 
		\begin{equation*}
		\left\{
		\begin{aligned}
		\mathcal{H}^{p,q}_{(2);\bar{\pa}_{\tilde{E}}}(\tilde{X},\tilde{E})=\{0\},\  & p+q\neq n\\
		\mathcal{H}^{p,q}_{(2);\bar{\pa}_{\tilde{E}}}(\tilde{X},\tilde{E})\neq \{0\},\  & p+q=n\\
		\end{aligned}  
		\right.
		\end{equation*}
	\end{theorem}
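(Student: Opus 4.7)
My plan is to split Theorem~\ref{T3.12} into a vanishing half (for $p+q\neq n$) and a non-vanishing half (for $p+q=n$), and to attack them with the two complementary tools already assembled in Section~3: the Bochner--Kodaira--Nakano estimate (Corollary~\ref{C2}) and Gromov's Vafa--Witten twisting trick (Theorems~\ref{T7} and~\ref{T6}).

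For the vanishing half, first pull everything back to the universal cover $\pi:\tilde X\to X$. Because $\sec\leq -K$, \cite[Lemma~3.2]{CY} furnishes $\theta\in\Om^{1}(\tilde X)$ with $\tilde\w=d\theta$ and $\|\theta\|_{L^{\infty}(\tilde X)}\leq\sqrt{n}\,K^{-1/2}$; the local isometry $\pi$ preserves the Chern curvature pointwise, so $|[\La,i\Theta(\tilde E)]|=|[\La,i\Theta(E)]|\leq c(n)K$. Corollary~\ref{C2} then gives $\mathcal{H}^{p,q}_{(2);\bar{\pa}_{\tilde E}}(\tilde X,\tilde E)=\{0\}$ for every $p+q\neq n$, together with a uniform spectral gap $\De_{\bar{\pa}_{\tilde E}}\geq\delta>0$ on $L^{2}$ forms of those bidegrees.

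For the non-vanishing half I would follow Gromov's Vafa--Witten strategy. Since $\tilde\w$ is exact, build a Hermitian line bundle $(\tilde L,\na)$ on $\tilde X$ with $i\Theta(\tilde L)=\tilde\w$; the action of $\Ga=\pi_{1}(X)$ lifts to $\tilde L$ only up to $S^{1}$, so Theorem~\ref{T7} applies in its extended form with the group $\bar{\Ga}$. Let $\mathcal{D}_{p}=\bar{\pa}_{\tilde E}+\bar{\pa}_{\tilde E}^{*}$ act between the even- and odd-$q$ parts of $\Om^{p,\bullet}(\tilde X,\tilde E)$, and twist it by $\na^{\otimes\alpha}$ for $\alpha\in\R$. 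Theorem~\ref{T7} identifies
\begin{equation*}
P(\alpha):=L^{2}\text{-}\mathrm{Index}_{\bar{\Ga}}(\mathcal{D}_{p}\otimes\na^{\otimes\alpha})=\int_{X}I_{\mathcal{D}_{p}}\wedge\exp(\alpha[\tilde\w])
\end{equation*}
with a polynomial in $\alpha$ of degree $n$ whose leading coefficient $\tfrac{1}{n!}\,I^{0}_{\mathcal{D}_{p}}\int_{X}\w^{n}$ is strictly positive (here $I^{0}_{\mathcal{D}_{p}}=\binom{n}{p}\mathrm{rank}(E)>0$ and $\int_{X}\w^{n}>0$), so $P\not\equiv 0$ and there exists $\alpha_{0}\in\R$ with $P(\alpha_{0})\neq 0$.

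Finally I would invoke Theorem~\ref{T6}: $\tilde X$ is simply connected so $H^{1}_{dR}(\tilde X)=0$, the form $\tilde\w$ is $d$-bounded, and $P(\alpha_{0})\neq 0$ supplies the integral hypothesis. This yields $\la_{0}(\mathcal{D}_{p})=0$ for the untwisted operator. Combined with the spectral gap on bidegrees $p+q\neq n$ from the vanishing step, any spectral mass of $\mathcal{D}_{p}$ at $0$ must live entirely in bidegree $(p,n-p)$, and the $\Ga$-equivariant (von Neumann) spectral machinery behind Atiyah's $L^{2}$-index theorem then upgrades this mass to an honest $L^{2}$-harmonic form, giving $\mathcal{H}^{p,n-p}_{(2);\bar{\pa}_{\tilde E}}(\tilde X,\tilde E)\neq\{0\}$. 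The hardest step will be this last upgrade: on a single Hilbert space, ``$0\in\operatorname{spec}(\mathcal{D}_{p})$'' could fail to produce an $L^{2}$-kernel element through continuous spectrum at $0$, but in our $\Ga$-equivariant setting the non-vanishing of the twisted index together with the bidegree spectral gap forces a spectral atom at $0$ located in the surviving bidegree, which is the closing step of Gromov's original argument.
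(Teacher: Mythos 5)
Your proposal follows essentially the same route as the paper: vanishing for $p+q\neq n$ by pulling back to the universal cover and applying Corollary \ref{C2}, and non-vanishing for $p+q=n$ via Gromov's twisted $L^{2}$-index theorem (Theorem \ref{T7}) combined with the Vafa--Witten spectral bound (Theorem \ref{T6}). You spell out the twisting construction and the passage from $\la_{0}(\mathcal{D}_{p})=0$ to a nontrivial $L^{2}$-kernel in more detail than the paper, which at that point simply cites Theorem \ref{T6} and the spectral gap on bidegrees $p+q\neq n$, but the underlying argument is the same.
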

	\begin{proof}
		Since $\pi$ is a local isometry, the Chern curvature $\Theta(\tilde{E})$ also satisfies
		$$|[\La,i\Theta(\tilde{E})]|\leq c(n)K.$$
		We denote by $\tilde{\mathcal{D}}_{p}$ the lifted of  $\mathcal{D}_{p}$ for $p\geq0$. Following Proposition \ref{P2}, we get $\mathcal{H}^{p,q}_{(2);\bar{\pa}_{\tilde{E}}}(\tilde{X},\tilde{E})=\{0\}$ for any $p+q\neq n$. Since $\int_{X}[\w]^{n}\neq 0$, by Theorem \ref{T6}, we obtain that either 
		$$\ker \tilde{\mathcal{D}}_{p}\cap\oplus\Om^{p,+}_{(2)}(X,E)=\oplus_{q\ even}\mathcal{H}^{p,q}_{(2);\bar{\pa}_{\tilde{E}}}(\tilde{X},\tilde{E})\neq{0}$$ or 
		$$\ker \tilde{\mathcal{D}}_{p}\cap\oplus\Om^{p,-}_{(2)}(X,E)=\oplus_{q\ odd}\mathcal{H}^{p,q}_{(2);\bar{\pa}_{\tilde{E}}}(\tilde{X},\tilde{E})\neq{0}.$$
		Therefore, for any $p+q=n$, we have $$\mathcal{H}^{p,q}_{(2);\bar{\pa}_{\tilde{E}}}(\tilde{X},\tilde{E})\neq{0}.$$
	\end{proof}
	\subsection{$L^{2}$-Hodge numbers on vector bundle $E$}
	We assume throughout this subsection that $(X,g,J)$ is a compact complex $n$-dimensional manifold with a Hermitian metric $g$, and $\pi: (\tilde{X},\tilde{g},\tilde{J})\rightarrow(X, g,J)$ its universal covering with $\Gamma=\pi_{1}(X)$ as an isometric group of deck transformations. Let $E\rightarrow X$ be a holomorphic bundle on $X$. We denote by $\tilde{g}:=\pi^{\ast}g$ the pull-back metric on $\tilde{X}$ and $\tilde{E}:=\pi^{\ast}E$ the pull-back bundle on $\tilde{X}$. We call an open set $U\subset\tilde{X}$ a fundamental domain of the action of $\tilde{\Gamma}$ on $\tilde{X}$ if the following conditions are satisfied:\\
	(1) $\tilde{X}=\cup_{\gamma\in\Gamma}\gamma(\bar{U})$,\\
	(2) $\gamma_{1}(U)\cap \gamma_{2}(U)=\empty$ for $\gamma_{1},\gamma_{2}\in\Gamma$, $\gamma_{1}\neq\gamma_{2}$ and\\
	(3) $\bar{U}\backslash U$ has zero measure.\\
	We then have (see \cite{Bei} or \cite[Section 3.6.1]{MM})
	$$\Om^{p,q}_{(2)}(\tilde{X},\tilde{E})\cong L^{2}(\Gamma)\otimes\Om^{p,q}_{(2)}(U,\tilde{E}|_{U})\cong L^{2}(\Gamma)\otimes\Om^{p,q}(X,E),$$
	where a basis for $L^{2}(\Gamma)$ is given by the function $\de_{\gamma}$ with $\gamma\in\Gamma$ defined by $\de_{\gamma}(\gamma')=1$ if $\gamma=\gamma'$ and $\de_{\gamma}(\gamma')=0$ if $\gamma\neq\gamma'$. Consider now a $\Gamma$-module $V\subset\Om^{p,q}_{(2)}(\tilde{X},\tilde{E})$, that is a closed subspace of $\Om^{p,q}_{(2)}(\tilde{X},\tilde{E})$ which is invariant under the action of $\Gamma$. If $\{\eta_{i}\}_{i\in\N}$ is an orthonormal basis for $V$ then the following quantity is finite:
	$$\sum_{i\in\N}\int_{U}|\eta_{i}|^{2}dvol_{\tilde{g}|_{U}}$$
	and does not depend either on the choose of the orthonormal basis of $V$ or on the choice of the fundamental domain of the action of $\Gamma$ on $\tilde{X}$. The von Neumann dimension of a $\Gamma$-module $V$ is therefore defined as
	$$\dim_{\Gamma}(V)=\sum_{i\in\N}\int_{U}|\eta_{i}|^{2}dvol_{\tilde{g}|_{U}},$$
	where $\{\eta_{i} \}_{i\in\N}$ is any orthonormal basis for $V$ and $U$ is any fundamental domain of the action of $\Gamma$ on $\tilde{X}$. Since the Laplacian $\De_{\bar{\pa}_{\tilde{E}}}$
	commutes with the action of $\Gamma$, a natural and important example of $\Gamma$-module is provided by the space of $L^{2}$ $\bar{\pa}_{\tilde{E}}$-harmonic forms of bidegree $(i,j)$, $\mathcal{H}^{i,j}_{(2);\bar{\pa}_{\tilde{E}}}(\tilde{X},\tilde{E})$ for each $i,j=0,\cdots,n$ (see \cite[Section 3.6.2]{MM}). We  denote by $\dim_{\Gamma}\mathcal{H}^{p,q}_{(2);\bar{\pa}_{\tilde{E}}}(\tilde{X},\tilde{E})$ the Von Neumann dimension of $\mathcal{H}^{p,q}_{(2);\bar{\pa}_{\tilde{E}}}(\tilde{X},\tilde{E})$ with respect to $\Gamma$, which is a nonnegative real number. We have the following two basic facts. 
	\begin{lemma}
		$$\dim_{\Ga}\mathcal{H}_{(2)}^{k}(M)=0 \Leftrightarrow \mathcal{H}_{(2)}^{k}(M)=\{0\},$$\\
		and $\dim_{\Ga}\mathcal{H}$ is additive: Given $$0\rightarrow\mathcal{H}_{1}\rightarrow\mathcal{H}_{2}\rightarrow \mathcal{H}_{3}\rightarrow 0,$$
		one have $$\dim_{\Ga}\mathcal{H}_{2}=\dim_{\Ga}\mathcal{H}_{1}+\dim_{\Ga}\mathcal{H}_{3}.$$
	\end{lemma}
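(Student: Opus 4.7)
The plan is to use the isomorphism from the text, $\Omega^{p,q}_{(2)}(\tilde X,\tilde E)\cong L^{2}(\Gamma)\otimes \Omega^{p,q}(X,E)$, to identify any closed $\Gamma$-submodule $V$ with the range of a $\Gamma$-equivariant orthogonal projection $P_V$ on $L^{2}(\Gamma)\otimes H$, where $H:=\Omega^{p,q}(X,E)$ is finite-dimensional since $X$ is compact. The key observation, to be established first, is that the functional $\dim_{\Gamma}$ defined in the text coincides with the von Neumann trace
\[
\dim_{\Gamma}(V)=\sum_{j}\langle P_V(\delta_{e}\otimes v_{j}),\delta_{e}\otimes v_{j}\rangle,
\]
where $\{v_{j}\}$ is any orthonormal basis of $H$. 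This follows by unfolding an orthonormal basis $\{\eta_{i}\}$ of $V$ in the tensor product picture and using $\Gamma$-invariance of the fundamental domain decomposition to rewrite $\sum_{i}\int_{U}|\eta_{i}|^{2}$ as the above trace; the independence of the fundamental domain $U$ and of the basis $\{\eta_{i}\}$ (asserted in the text) is exactly what makes this identification intrinsic.

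For part (1), the nontrivial direction is $V\neq\{0\}\Rightarrow\dim_{\Gamma}V>0$. I would argue as follows. Pick a nonzero $\eta\in V$, and expand $\eta=\sum_{\gamma\in\Gamma}\delta_{\gamma}\otimes w_{\gamma}$ with $\sum\|w_{\gamma}\|^{2}<\infty$; some $w_{\gamma_{0}}\neq 0$. Because $V$ is $\Gamma$-invariant, the translate $\gamma_{0}^{-1}\cdot\eta$ also lies in $V$ and has a nonzero $\delta_{e}$-component, namely $\delta_{e}\otimes w_{\gamma_{0}}$. Hence $P_V(\delta_{e}\otimes w_{\gamma_{0}})\neq 0$, and therefore $\langle P_V(\delta_{e}\otimes w_{\gamma_{0}}),\delta_{e}\otimes w_{\gamma_{0}}\rangle=\|P_V(\delta_{e}\otimes w_{\gamma_{0}})\|^{2}>0$. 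Expanding $w_{\gamma_{0}}$ in the basis $\{v_{j}\}$ and using positivity of all the diagonal entries of the projection $P_V$ in that basis, one of the summands in the trace formula must be strictly positive, giving $\dim_{\Gamma}V>0$. The converse direction is immediate since every summand in the defining sum is nonnegative.

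For part (2) (additivity), the embedding $\mathcal{H}_{1}\hookrightarrow\mathcal{H}_{2}$ produces a $\Gamma$-equivariant orthogonal decomposition $\mathcal{H}_{2}=\mathcal{H}_{1}\oplus\mathcal{H}_{1}^{\perp}$, and the quotient map $\mathcal{H}_{2}\to\mathcal{H}_{3}$ restricts to a $\Gamma$-equivariant unitary isomorphism $\mathcal{H}_{1}^{\perp}\cong\mathcal{H}_{3}$. I would then concatenate an orthonormal basis of $\mathcal{H}_{1}$ with one of $\mathcal{H}_{1}^{\perp}$ to obtain an orthonormal basis of $\mathcal{H}_{2}$. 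Integrating the squared pointwise norms of the concatenated basis over $U$ splits the sum into two pieces, which by basis-independence of $\dim_{\Gamma}$ give $\dim_{\Gamma}\mathcal{H}_{1}$ and $\dim_{\Gamma}\mathcal{H}_{1}^{\perp}=\dim_{\Gamma}\mathcal{H}_{3}$ respectively.

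The main obstacle is the nontrivial direction of (1): the naive approach of taking a nonzero $\eta\in V$ and bounding $\dim_{\Gamma}V$ below by $\int_{U}|\eta|^{2}/\|\eta\|^{2}$ fails because $\eta$ might vanish identically on $U$ while being nonzero on some $\gamma U$. The $\Gamma$-translation trick above (equivalently, the faithfulness of the canonical trace on the group von Neumann algebra of $\Gamma$) is what allows us to relocate mass into the fundamental domain and conclude strict positivity; the compactness of $X$, which guarantees that $H$ is finite-dimensional so that the trace is a finite sum over $\{v_{j}\}$, is essential in making this argument elementary.
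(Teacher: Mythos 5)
The paper does not actually prove this lemma: it is stated as a pair of ``basic facts'' about von Neumann dimension, with the underlying theory delegated to the cited references (Bei, Ma--Marinescu). So your proposal is not competing with an argument in the text. Judged on its own, it follows the standard route — identify $\dim_{\Gamma}(V)$ with the canonical trace $\sum_{j}\langle P_{V}(\delta_{e}\otimes v_{j}),\delta_{e}\otimes v_{j}\rangle$ of the $\Gamma$-equivariant projection onto $V$, then exploit faithfulness of that trace — and the translation trick you use for the direction $V\neq\{0\}\Rightarrow\dim_{\Gamma}V>0$ (move a nonzero Fourier coefficient of $\eta$ into the $\delta_{e}$-slot by acting with $\gamma_{0}^{-1}$, then use positivity of $P_{V}$ to force a strictly positive diagonal entry) is exactly the right idea and is correctly executed.

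Two points need repair. First, $H=\Omega^{p,q}(X,E)$ is \emph{not} finite-dimensional for compact $X$ (only the harmonic space is); what saves you is that the trace is a sum of nonnegative terms, hence well defined (possibly infinite) for any orthonormal basis $\{v_{j}\}$ of $H$, and the interchange of summations identifying it with $\sum_{i}\int_{U}|\eta_{i}|^{2}$ is justified by Tonelli. Your positivity argument then goes through verbatim, so you should simply drop the finiteness claim rather than lean on it as ``essential.'' Second, in the additivity step the quotient map $\mathcal{H}_{2}\to\mathcal{H}_{3}$ restricted to $\mathcal{H}_{1}^{\perp}$ is in general only a bounded $\Gamma$-equivariant bijection, not a unitary, unless the exact sequence is interpreted as an orthogonal decomposition from the outset. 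To conclude $\dim_{\Gamma}\mathcal{H}_{1}^{\perp}=\dim_{\Gamma}\mathcal{H}_{3}$ you need the standard fact that $\dim_{\Gamma}$ is invariant under bounded $\Gamma$-equivariant isomorphisms; this follows from polar decomposition, since the partial isometry part of such a map is a $\Gamma$-equivariant unitary. With these two corrections the proof is complete.
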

	Then the $L^{2}$-Hodge numbers of $(X,E)$, denote by $h^{p,q}_{(2)}(X,E)$, are defined to be
	$$h^{p,q}_{(2)}(X,E)=\dim_{\Gamma}\mathcal{H}^{p,q}_{(2);\bar{\pa}_{\tilde{E}}}(\tilde{X},\tilde{E})\in\mathbb{R}^{\geq0},\ (0\leq p,q\leq n).$$
	The Dolbeault-type operators $\mathcal{D}_{p}$ can be lifted to $(\tilde{X},\tilde{E})$:
	$$\tilde{\mathcal{D}}_{p}:\Om^{p,\ast}_{(2)}(\tilde{X},\tilde{E})\rightarrow\Om^{p,\ast\pm1}_{(2)}(\tilde{X},\tilde{E}),$$
	and one can define the $L^{2}$-index of the lifted operators $\tilde{\mathcal{D}}_{p}$ by
	\begin{equation*}
	\begin{split}
	L^{2}Index_{\Gamma}(\tilde{\mathcal{D}}_{p})&=\dim_{\Gamma}(\ker\tilde{\mathcal{D}}_{p})-\dim_{\Gamma}(\rm{coker}\tilde{\mathcal{D}}_{p})\\
	&=\dim_{\Gamma}(\oplus_{q\ even}\mathcal{H}^{p,q}_{(2);\bar{\pa}_{\tilde{E}}}(\tilde{X},\tilde{E}))-\dim_{\Gamma}(\oplus_{q\ odd}\mathcal{H}^{p,q}_{(2);\bar{\pa}_{\tilde{E}}}(\tilde{X},\tilde{E}))\\
	&=\sum_{q\ even}\dim_{\Gamma}\mathcal{H}^{p,q}_{(2);\bar{\pa}_{\tilde{E}}}(\tilde{X},\tilde{E})-\sum_{q\ odd}\dim_{\Gamma}\mathcal{H}^{p,q}_{(2);\bar{\pa}_{\tilde{E}}}(\tilde{X},\tilde{E})\\
	&=\sum_{q=0}^{n}(-1)^{q}h_{(2)}^{p,q}(X,E).\\
	\end{split}
	\end{equation*}
	We recall the Atiyah's $L^{2}$-index theorem \cite{Ati,Pan}.
	\begin{theorem}\cite[Theorem 6.1]{Pan}
		Let $X$ be closed Riemannian manifold, $P$ a determined elliptic operator on sections of certain bundles over $X$. Denote  by $\tilde{P}$ its lift of $P$ to the universal convering space $\tilde{X}$. Let $\Ga=\pi_{1}(M)$. Then   the $L^{2}$ kernel of $\tilde{P}$ has a finite $\Ga$-dimension and 
		$$L^{2}Index_{\Ga}(\tilde{P})=Index(P).$$
	\end{theorem}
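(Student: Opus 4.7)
The plan is to follow Atiyah's original heat-kernel strategy for the $L^{2}$-index theorem, adapting the ordinary McKean--Singer formula to the $\Ga$-equivariant setting on the universal cover.

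First I would construct the two heat semigroups $e^{-t\tilde{P}^{\ast}\tilde{P}}$ and $e^{-t\tilde{P}\tilde{P}^{\ast}}$ on $\tilde{X}$. Since $P$ is elliptic on the closed manifold $X$, its lift $\tilde{P}$ is uniformly elliptic on the complete manifold of bounded geometry $\tilde{X}$, so $\tilde{P}^{\ast}\tilde{P}$ and $\tilde{P}\tilde{P}^{\ast}$ are essentially self-adjoint and admit $C^{\infty}$ Schwartz kernels $\tilde{k}_{t}^{\pm}(\tilde{x},\tilde{y})$ with Gaussian off-diagonal decay. These kernels inherit the $\Ga$-equivariance of $\tilde{P}$: $\tilde{k}_{t}^{\pm}(\gamma\tilde{x},\gamma\tilde{y})=\gamma\cdot\tilde{k}_{t}^{\pm}(\tilde{x},\tilde{y})\cdot\gamma^{-1}$ for all $\gamma\in\Ga$.

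Next, fixing a fundamental domain $\mathcal{F}\subset\tilde{X}$ for the action of $\Ga$, I would define the $\Ga$-trace by
$$\mathrm{Tr}_{\Ga}(e^{-t\tilde{P}^{\ast}\tilde{P}}):=\int_{\mathcal{F}}\mathrm{tr}(\tilde{k}_{t}^{+}(\tilde{x},\tilde{x}))\,dv_{\tilde{g}},$$
and analogously for $\tilde{P}\tilde{P}^{\ast}$. The pointwise Gaussian bound together with the compactness of $\overline{\mathcal{F}}$ guarantees finiteness for every $t>0$. Via the functional calculus, $\chi_{\{0\}}(\tilde{P}^{\ast}\tilde{P})\leq e^{-t\tilde{P}^{\ast}\tilde{P}}$, so additivity and monotonicity of $\dim_{\Ga}$ give the finite-dimensionality of the $L^{2}$-kernel of $\tilde{P}$. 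To establish the $\Ga$-equivariant McKean--Singer identity
$$L^{2}\mathrm{Index}_{\Ga}(\tilde{P})=\mathrm{Tr}_{\Ga}(e^{-t\tilde{P}^{\ast}\tilde{P}})-\mathrm{Tr}_{\Ga}(e^{-t\tilde{P}\tilde{P}^{\ast}})\qquad(t>0),$$
I would use the partial isometry $\tilde{P}|\tilde{P}|^{-1}$ on the strictly positive spectrum to intertwine the spectral projections of $\tilde{P}^{\ast}\tilde{P}$ and $\tilde{P}\tilde{P}^{\ast}$; $\Ga$-equivariance ensures this isomorphism preserves von Neumann dimension, so the contributions from positive spectrum cancel in the difference of traces.

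The final step is to pass to $t\to 0^{+}$. The short-time asymptotic expansion of $\mathrm{tr}(\tilde{k}_{t}^{\pm}(\tilde{x},\tilde{x}))$ is local, depending only on the jet of $\tilde{P}$ at $\tilde{x}$; since $\pi$ is a local diffeomorphism and $\tilde{P}=\pi^{\ast}P$, these coefficients coincide with those of $\mathrm{tr}(k_{t}^{\pm}(x,x))$ at $x=\pi(\tilde{x})$. Integrating the supertrace difference over $\mathcal{F}$ therefore equals the integral of the supertrace of the heat kernel of $P$ over $X$, and the classical McKean--Singer formula on the closed manifold $X$ identifies the latter with $\mathrm{Index}(P)$. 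The main technical obstacle I anticipate is justifying that the $t\to 0^{+}$ limit may be taken inside the integral over $\mathcal{F}$ uniformly; this is addressed by deriving the Gaussian off-diagonal estimates with constants that are uniform on $\tilde{X}$, which is possible because the cocompactness of the $\Ga$-action transfers uniform geometry bounds from $X$ to $\tilde{X}$.
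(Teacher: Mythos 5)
This statement is quoted in the paper as a known result (Atiyah's $L^{2}$-index theorem, via Pansu's exposition) and the paper supplies no proof of its own, so there is nothing internal to compare against; your sketch should be measured against the cited sources. It is a faithful outline of Atiyah's original heat-kernel argument: $\Ga$-equivariant heat kernels with Gaussian decay, the $\Ga$-trace over a fundamental domain, domination of the kernel projection by $e^{-t\tilde{P}^{\ast}\tilde{P}}$ to get finiteness of $\dim_{\Ga}\ker\tilde{P}$, the equivariant McKean--Singer identity, and the $t\to 0^{+}$ localization. Two points deserve more care than your sketch gives them. First, in the cancellation over the strictly positive spectrum you cannot argue eigenvalue-by-eigenvalue, since $0$ need not be isolated in the spectrum of $\tilde{P}^{\ast}\tilde{P}$ on the non-compact cover; the correct statement is that the polar isometry identifies the spectral projections $E_{(0,\la]}$ of the two Laplacians as $\Ga$-modules, so the spectral density functions agree on $(0,\infty)$ and the Stieltjes integrals $\int_{0^{+}}^{\infty}e^{-t\la}\,d(\mathrm{Tr}_{\Ga}E_{\la})$ coincide. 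Second, for the $t\to0^{+}$ step it is cleaner (and is what Atiyah does) to use the exact relation $k_{t}(x,y)=\sum_{\gamma\in\Ga}\tilde{k}_{t}(\tilde{x},\gamma\tilde{y})$ between the heat kernels downstairs and upstairs: since the deck action is free and cocompact, $d(\tilde{x},\gamma\tilde{x})$ is bounded below by a positive constant for $\gamma\neq e$, so the off-diagonal terms vanish as $t\to0^{+}$ and the $\Ga$-supertrace converges to $\int_{X}(\mathrm{tr}\,k^{+}_{t}-\mathrm{tr}\,k^{-}_{t})=\mathrm{Index}(P)$; this avoids having to justify term-by-term integration of the local asymptotic expansion. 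With these two repairs your argument is the standard and correct proof.
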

	We  define $L^{2}$- Euler characteristics 
	$$\chi^{p}_{(2)}(X,E)=\sum_{q=0}^{n}(-1)^{q}h^{p,q}_{(2)}(X,E)$$
	on the holomorphic bundle $\Om^{p,0}(X)\otimes E$ over a compact K\"{a}hler manifold. The celebrated $L^{2}$-index theorem of Atiyah \cite{Ati} asserts that
	$$Index(\mathcal{D}_{p})=L^{2}Index_{\Gamma}(\tilde{\mathcal{D}}_{p})$$
	so we have the following crucial identities between $\chi^{p}(X,E)$ and the $L^{2}$-Hodge numbers $h^{p,q}_{(2)}(X,E)$:
	$$\chi^{p}(X,E)=\chi^{p}_{(2)}(X,E)=\sum_{q=0}^{n}(-1)^{q}h^{p,q}_{(2)}(X,E).$$
	\begin{theorem}\label{T9}
		Let $(X,\w)$ be a compact K\"{a}hler manifold, $E$ a holomorphic bundle on $X$. Then  for any $p=0,\cdots,n$,
		$$\chi^{p}(X,E)=\chi^{p}_{(2)}(X,E)=\sum_{q=0}^{n}(-1)^{q}h^{p,q}_{(2)}(X,E).$$	
	\end{theorem}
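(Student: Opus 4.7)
The plan is to prove Theorem \ref{T9} as a direct application of Atiyah's $L^{2}$-index theorem, which the author has just quoted. The content is essentially bookkeeping: translate the index of the twisted Dolbeault operator $\mathcal{D}_{p}=\bar{\pa}_{E}+\bar{\pa}_{E}^{\ast}$ on the compact base into an alternating sum of Hodge numbers on one side, and into an alternating sum of $L^{2}$-Hodge numbers on the universal cover on the other side.

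First I would record that, since $X$ is compact and $\mathcal{D}_{p}$ is a first-order elliptic operator acting on $\Omega^{p,\bullet}(X,E)$ with $\bar{\pa}_{E}^{2}=0$ (here using that $E$ is holomorphic), standard elliptic Hodge theory on $X$ gives $\ker\mathcal{D}_{p}\cap\Omega^{p,q}(X,E)=\mathcal{H}^{p,q}_{\bar{\pa}_{E}}$. Splitting into even/odd $q$, this yields
\begin{equation*}
\mathrm{Index}(\mathcal{D}_{p})=\sum_{q=0}^{n}(-1)^{q}h^{p,q}(X,E)=\chi^{p}(X,E),
\end{equation*}
which is the left-hand identity, and in fact just unpacks the definition of $\chi^{p}(X,E)$ recalled earlier in the paper.

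Next I would apply Atiyah's $L^{2}$-index theorem (the version cited from Pansu, \cite[Theorem 6.1]{Pan}) to the lifted operator $\tilde{\mathcal{D}}_{p}$ on the universal cover $(\tilde X,\tilde E)$. The hypothesis of the theorem is satisfied: $\tilde{\mathcal{D}}_{p}$ is the pullback of a determined elliptic operator on the closed Riemannian manifold $X$ under the covering $\pi:\tilde X\to X$, and $\Gamma=\pi_{1}(X)$ is the deck transformation group. The conclusion is
\begin{equation*}
\mathrm{Index}(\mathcal{D}_{p})=L^{2}\mathrm{Index}_{\Gamma}(\tilde{\mathcal{D}}_{p}).
\end{equation*}
By the definition of $L^{2}\mathrm{Index}_{\Gamma}$ spelled out immediately before the statement of the theorem, the right-hand side equals
\begin{equation*}
\sum_{q=0}^{n}(-1)^{q}\dim_{\Gamma}\mathcal{H}^{p,q}_{(2);\bar{\pa}_{\tilde E}}(\tilde X,\tilde E)=\sum_{q=0}^{n}(-1)^{q}h^{p,q}_{(2)}(X,E)=\chi^{p}_{(2)}(X,E).
\end{equation*}
Chaining the two identities gives $\chi^{p}(X,E)=\chi^{p}_{(2)}(X,E)$.

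There is no serious obstacle here: all of the analytic input (finiteness of the $\Gamma$-dimension of $\ker\tilde{\mathcal{D}}_{p}$, additivity of $\dim_{\Gamma}$, and equality of the two indices) is imported wholesale from Atiyah's theorem as cited. The only care needed is to verify that $\tilde{\mathcal{D}}_{p}$ really is the lift of a determined elliptic operator on $X$ — this is immediate because $\mathcal{D}_{p}$ itself is determined elliptic on the compact manifold $X$, and $\pi$ is a local isometry so all relevant structures on $\tilde E$ are just pullbacks. Thus the proof is short and consists almost entirely of assembling the definitions and invoking the quoted index theorem.
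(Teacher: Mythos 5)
Your proposal is correct and follows exactly the route the paper takes: the paper identifies $\chi^{p}(X,E)$ with $\mathrm{Index}(\mathcal{D}_{p})$ via Hodge theory on the compact base, invokes Atiyah's $L^{2}$-index theorem for the lifted operator $\tilde{\mathcal{D}}_{p}$, and unpacks $L^{2}Index_{\Gamma}(\tilde{\mathcal{D}}_{p})$ as the alternating sum of the von Neumann dimensions $h^{p,q}_{(2)}(X,E)$, just as you do. No gaps.
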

	In \cite{Gro}, Gromov proved that if $X$ is K\"{a}hler hyperbolic, $\dim_{\C}X=n$, then for every $p=0,1,\ldots,n$, the Euler characteristic $$\chi^{p}(X)=\int_{X}td(X)ch(\Om^{p,0}(TX))$$
	does not vanish and $sign\chi^{p}=(-1)^{n-p}$. We will extend the result to holomorphic vector bundle case.
	\begin{proposition}\label{P4}
		Let $(X,\w)$ be a compact K\"{a}hler manifold with sectional curvature bounded from above by a negative constant, i.e.,
		$$sec\leq -K,$$ 
		for some $K>0$. Let $E$ be a holomorphic vector bundle on $X$, $D_{E}$ be the  Chern connection on $E$. If the curvature $\Theta(E)$ of $D_{E}$ such that 
		$$|[\La,i\Theta(E)]|\leq c(n)K,$$
		then  
		\begin{equation*}
		\left\{
		\begin{aligned}
		h^{p,q}_{(2)}(X,E)=0,\  & p+q\neq n\\
		h^{p,q}_{(2)}(X,E)\geq 1,\  & p+q=n\\
		\end{aligned}  
		\right.
		\end{equation*}
		In particular, for every $p=0,1,\cdots,n$, the Euler characteristic 
		$$(-1)^{n-p}\chi^{p}(X,E)\geq 1.$$
	\end{proposition}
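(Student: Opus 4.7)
The plan is to combine the bidegree vanishing/nonvanishing dichotomy of Theorem~\ref{T3.12} with the $L^{2}$-Atiyah identity of Theorem~\ref{T9}. Both halves of the conclusion follow formally once we establish that the $L^{2}$ $\bar{\pa}_{\tilde E}$-harmonic space on the universal cover is concentrated in bidegrees $(p,n-p)$.

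First, since $\pi:\tilde X\to X$ is a local isometry, the pullback Chern curvature satisfies $|[\Lambda,i\Theta(\tilde E)]|\leq c(n)K$ pointwise, so Theorem~\ref{T3.12} (equivalently, Corollary~\ref{C2} applied to $\tilde E$) gives
\begin{equation*}
\mathcal{H}^{p,q}_{(2);\bar{\pa}_{\tilde E}}(\tilde X,\tilde E)=\{0\}\qquad\text{for all } p+q\neq n.
\end{equation*}
The lemma asserting that $\dim_{\Gamma}\mathcal{H}=0$ is equivalent to $\mathcal{H}=\{0\}$ then forces $h^{p,q}_{(2)}(X,E)=0$ whenever $p+q\neq n$, which is the first line of the conclusion.

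Next, I would invoke Theorem~\ref{T9}, which asserts $\chi^{p}(X,E)=\sum_{q=0}^{n}(-1)^{q}h^{p,q}_{(2)}(X,E)$. By the vanishings just obtained, all terms with $q\neq n-p$ drop out, leaving
\begin{equation*}
\chi^{p}(X,E)=(-1)^{n-p}h^{p,n-p}_{(2)}(X,E).
\end{equation*}
Since $\chi^{p}(X,E)$ is a Fredholm index on a compact manifold, it is an integer, and hence so is $h^{p,n-p}_{(2)}(X,E)$.

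The only remaining point is to promote this integer from $\geq 0$ to $\geq 1$. Here Theorem~\ref{T3.12} enters a second time: for each fixed $p$ it guarantees that at least one of $\oplus_{q\text{ even}}\mathcal{H}^{p,q}_{(2);\bar{\pa}_{\tilde E}}$ or $\oplus_{q\text{ odd}}\mathcal{H}^{p,q}_{(2);\bar{\pa}_{\tilde E}}$ is non-zero. By the bidegree vanishing already established, the only summand that can possibly be non-trivial is the one with $q=n-p$, so $\mathcal{H}^{p,n-p}_{(2);\bar{\pa}_{\tilde E}}(\tilde X,\tilde E)\neq\{0\}$, and hence $h^{p,n-p}_{(2)}(X,E)$ is a positive integer, in particular $\geq 1$. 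This simultaneously delivers $h^{p,n-p}_{(2)}(X,E)\geq 1$ and $(-1)^{n-p}\chi^{p}(X,E)\geq 1$. The conceptually delicate step is exactly this last one: the Atiyah $L^{2}$-index theorem packaged in Theorem~\ref{T3.12} only produces non-vanishing of a parity sum of harmonic spaces, so it is essential to kill every off-diagonal bidegree first in order to extract non-triviality of the specific space $\mathcal{H}^{p,n-p}_{(2);\bar{\pa}_{\tilde E}}$; once this is done, the proposition follows with no further computation.
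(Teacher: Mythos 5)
Your proposal is correct and follows essentially the same route as the paper: the paper's proof is a one-line appeal to Theorem~\ref{T3.12} (bidegree vanishing off the antidiagonal plus nonvanishing on it) combined with the Atiyah $L^{2}$-index identity of Theorem~\ref{T9}, giving $(-1)^{n-p}\chi^{p}(X,E)=h^{p,n-p}_{(2)}(X,E)\geq 1$. Your write-up merely fills in the details the paper leaves implicit, including the (genuinely needed) observation that integrality of the index promotes the strictly positive von Neumann dimension $h^{p,n-p}_{(2)}(X,E)$ to a value $\geq 1$.
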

	\begin{proof}
		Following Theorems \ref{T3.12} and \ref{T9}, we have
		$$(-1)^{n-p}\chi^{p}(X,E)=h^{p,n-p}_{(2)}(X,E)\geq1.$$
	\end{proof}
	If we denote by $K_{j}(M,E)$ $(0\leq j\leq n)$ the coefficients in the Taylor expansion of $\chi_{y}(X,E)$ at $y=-1$, i.e., $$\chi_{y}(X,E):=\sum_{j=0}^{n}K_{j}(X,E)\cdot(y+1)^{j}.$$
	Following the idea of Li in \cite{Li}, we then have
	\begin{theorem}\label{T8}
		Let $(X,\w)$ be a  compact K\"{a}hler manifold with sectional curvature bounded from above by a negative constant, i.e.,
		$$sec\leq -K,$$ 
		for some $K>0$. Let $E$ be a holomorphic vector bundle on $X$, $D_{E}$ be the  Chern connection on $E$. If the curvature $\Theta(E)$ of  $D_{E}$ such that 
		$$C:=|[\La,i\Theta(E)]|\leq c(n)K,$$
		then for all $0\leq j\leq n$, $(X,E)$ satisfy Chern number inequalities
		\begin{equation*}
		(-1)^{n+j}K_{j}(X,E)\geq\sum_{p=j}^{n}\tbinom{n}{p}.
		\end{equation*}
	\end{theorem}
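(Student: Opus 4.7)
The plan is to reduce everything to the nonvanishing result already established in Proposition \ref{P4} by means of a purely algebraic manipulation of the generating polynomial $\chi_y(X,E)$. Since Proposition \ref{P4} gives $(-1)^{n-p}\chi^p(X,E)=h^{p,n-p}_{(2)}(X,E)\ge 1$ for every $p$, the only remaining task is to express the Taylor coefficients $K_j(X,E)$ at $y=-1$ in terms of the individual $\chi^p(X,E)$ and check that the signs combine correctly.

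First I would change variables. Starting from
\begin{equation*}
\chi_y(X,E)=\sum_{p=0}^{n}\chi^p(X,E)\,y^p,
\end{equation*}
I substitute $y=(y+1)-1$ and apply the binomial theorem to each $y^p=((y+1)-1)^p$. Collecting the coefficient of $(y+1)^j$ gives
\begin{equation*}
K_j(X,E)=\sum_{p=j}^{n}(-1)^{p-j}\binom{p}{j}\chi^p(X,E).
\end{equation*}
Multiplying both sides by $(-1)^{n+j}$ and using $(-1)^{n+j}(-1)^{p-j}=(-1)^{n+p}$ yields
\begin{equation*}
(-1)^{n+j}K_j(X,E)=\sum_{p=j}^{n}(-1)^{n+p}\binom{p}{j}\chi^p(X,E).
\end{equation*}

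Next I invoke Proposition \ref{P4}, which under the curvature assumption $C\le c(n)K$ tells us that $\chi^p(X,E)=(-1)^{n-p}h^{p,n-p}_{(2)}(X,E)$ with $h^{p,n-p}_{(2)}(X,E)\ge 1$. Substituting this in makes every sign $(-1)^{n+p}(-1)^{n-p}=1$, giving the clean identity
\begin{equation*}
(-1)^{n+j}K_j(X,E)=\sum_{p=j}^{n}\binom{p}{j}\,h^{p,n-p}_{(2)}(X,E),
\end{equation*}
and the desired inequality follows from the termwise bound $h^{p,n-p}_{(2)}(X,E)\ge 1$.

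There is no real analytic obstacle here; the genuine content is contained in the vanishing/nonvanishing package of Theorem \ref{T3.12} together with the $L^2$-index identification of Theorem \ref{T9} that produces Proposition \ref{P4}. The only points to be careful about in writing this up are (i) the sign bookkeeping in the substitution $y\mapsto(y+1)-1$, so that the factor $(-1)^{n+j}$ is indeed what kills the alternating signs, and (ii) making the binomial coefficient on the right of the statement match the one arising from the expansion, since the stated bound must be read off the identity above rather than guessed.
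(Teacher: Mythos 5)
Your proposal is correct and follows essentially the same route as the paper: both reduce the statement to Proposition \ref{P4} and then read off the coefficient of $(y+1)^{j}$ in $\chi_{y}(X,E)=\sum_{p}(-1)^{n-p}h^{p,n-p}_{(2)}(X,E)\,y^{p}$, your binomial substitution $y=(y+1)-1$ being the same computation as the paper's Taylor expansion at $y=-1$. One small remark: your derivation (like the paper's own proof and Theorem \ref{T1} in the introduction) yields the bound $\sum_{p=j}^{n}\tbinom{p}{j}$, so the $\tbinom{n}{p}$ appearing in the statement of Theorem \ref{T8} is evidently a typo.
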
 
	\begin{proof}
		Following Proposition \ref{P4}, we get
		$$\chi^{p}(X,E)=\sum_{q=0}^{n}(-1)^{q}h^{p,q}(X,E)=(-1)^{n-p}h_{(2)}^{p,n-p}(X,E).$$
		We have
		\begin{equation}\label{E1}
		\begin{split}
		(-1)^{n}\sum_{j=0}^{n}K_{j}(X,E)\cdot(y+1)^{n}&=(-1)^{n}\chi_{y}(X,E)\\
		&=(-1)^{n}\sum_{p=0}^{n}\chi^{p}(X,E)\cdot y^{p}\\
		&=\sum_{p=0}^{n}h^{p,n-p}(X,E)\cdot(-y)^{p}.\\
		\end{split}
		\end{equation}
		Now comparing the coefficients of the Taylor expansion at $y=-1$ on both sides of (\ref{E1}) yields
		\begin{equation*}
		\begin{split}
		(-1)^{n}K_{j}(X,E)&=\frac{1}{j!}[\sum_{p=0}^{n}h_{(2)}^{p,n-p}(X,E)\cdot(-y)^{p}]^{(j)}|_{y=-1}\\
		&=(-1)^{j}\sum_{p=j}^{n}\tbinom{p}{j}h^{p,n-p}_{(2)}(X,E).
		\end{split}
		\end{equation*}
		This implies that
		$$(-1)^{n+j}K_{j}(X,E)\geq\sum_{p=j}^{n}\tbinom{p}{j}.$$
	\end{proof}
	\begin{remark}
		If $E$ is flat bundle, then
		$$\chi_{y}(X,E)=rank(E)\chi_{y}(X)=\sum_{j=0}^{n}K_{j}(X)\cdot(y+1)^{j}.$$
		Therefore,
		$$K_{j}(X,E)=rank(E)K_{j}(X).$$
		The first few terms are given by
		$$K_{0}(X)=c_{n}[X],\ K_{1}(X)=-\frac{1}{2}nc_{n}[X],\cdots.$$
		A recursive algorithm for calculating $K_{j}$ was described in \cite[p. 144]{LW}. The formulas $K_{j}$ for $j\leq6$ are presented, respectively, in \cite[pp. 141--143]{LW}, \cite[p. 145]{Sa}.  \\
		If $E$ is a holomorphic bundle, then the first few terms of $K_{j}(X,E)$ are given by
		$$K_{0}(X,E)=\chi_{y}(X,E)|_{y=-1}=rank(E)c_{n}[X],$$ 
		\begin{equation*}
		\begin{split}
		K_{1}(X,E)&=\frac{d}{dy}\chi_{y}(X,E)|_{y=-1}\\
		&=\frac{d}{dy}\int_{X}ch(E)\prod_{i=1}^{n}(1+ye^{-\gamma_{i}})\frac{\gamma_{i}}{1-e^{-\gamma_{i}}}|_{y=-1}\\
		&=\int_{X}ch(E)td(X)\sum_{i=1}^{n}(\frac{e^{-\gamma_{i}}}{1-e^{-\gamma_{i}}}\prod_{i=1}^{n}(1-e^{-\gamma_{i}}))\\
		&=\int_{X}ch(E)\prod_{i=1}^{n}\gamma_{i}\sum_{i=1}^{n}(\frac{e^{-\gamma_{i}}}{1-e^{-\gamma_{i}}})\\ 
		&=\int_{X}ch(E)\prod_{i=1}^{n}\gamma_{i}\sum_{i=1}^{n}(-1+\frac{1}{1-e^{-\gamma_{i}}})\\ 
		&=\int_{X}ch(E)\sum_{i=1}^{n}\big{(}(\prod_{j\neq i}\gamma_{j})(\frac{\gamma_{i}}{1-e^{-\gamma_{i}}}-\gamma_{i})\big{)}\\ 
		&=\int_{X}ch(E)\sum_{i=1}^{n}\big{(}(\prod_{j\neq i}\gamma_{j})(1-\frac{\gamma_{i}}{2})\big{)}\\
		&=\int_{X}(rank(E)+c_{1}(E))(c_{n-1}(X)-\frac{n}{2}c_{n}(X))\\
		&=-\frac{rank(E)}{2}nc_{n}[X]+\langle c_{n-1}(X)c_{1}(E),X\rangle.\\
		\end{split}
		\end{equation*}
		Here we use the fact
		$$c_{n-1}(X)=\sum_{i=1}^{n}(\prod_{j\neq i}\gamma_{j}),\ c_{n}(X)=\prod_{i=1 }^{n}\gamma_{i}.$$
	\end{remark}
	\begin{corollary}\label{C3}
		Let $(X,\w)$ be a  compact K\"{a}hler surface with sectional curvature bounded from above by a negative constant, i.e.,
		$$sec\leq -K,$$ 
		for some $K>0$. Let $E$ be a holomorphic vector bundle on $X$, $D_{E}$ be the  Chern connection on $E$. If the curvature $\Theta(E)$ of  $D_{E}$ such that 
		$$C:=|[\La,i\Theta(E)]|\leq c(n)K,$$
		we then have
		\begin{equation*}
		\int_{X}c_{1}(X)c_{1}(E)=0.
		\end{equation*}
		Furthermore, if $ch_{2}(E)=0$, then
		$$\chi_{y}(X,E)=rank(E)\chi_{y}(X).$$
	\end{corollary}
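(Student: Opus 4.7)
The plan is to combine Theorem~\ref{T8} applied with $n=2$ with the Chern-class formulas for $K_0(X,E)$ and $K_1(X,E)$ derived in the preceding remark. First, from the proof of Theorem~\ref{T8} (taking $n=2$ in the identity $(-1)^{n+j}K_j(X,E)=\sum_{p=j}^{n}\binom{p}{j}h^{p,n-p}_{(2)}(X,E)$) I extract
\[
K_0(X,E)=h^{0,2}_{(2)}+h^{1,1}_{(2)}+h^{2,0}_{(2)},\qquad -K_1(X,E)=h^{1,1}_{(2)}+2h^{2,0}_{(2)},\qquad K_2(X,E)=h^{2,0}_{(2)},
\]
where $h^{p,q}_{(2)}:=h^{p,q}_{(2)}(X,E)$, each of which is $\geq 1$ by Proposition~\ref{P4}. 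On the other hand the remark supplies
\[
K_0(X,E)=\mathrm{rank}(E)\,c_2[X],\qquad K_1(X,E)=-\mathrm{rank}(E)\,c_2[X]+\int_X c_1(X)c_1(E).
\]

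Adding the two expressions for $K_0+K_1$ produces the key identity
\[
\int_X c_1(X)c_1(E)=K_0(X,E)+K_1(X,E)=h^{0,2}_{(2)}(X,E)-h^{2,0}_{(2)}(X,E),
\]
so the claim reduces to the Hodge-type symmetry $h^{0,2}_{(2)}(X,E)=h^{2,0}_{(2)}(X,E)$. For this I use the surface-specific observation that both $\La\a$ and $\Theta(E)\wedge\a$ vanish on $\a\in\Om^{2,0}(X,E)$ and on $\a\in\Om^{0,2}(X,E)$ for purely dimensional reasons; hence $[i\Theta(E),\La]$ acts trivially on the extremal $(2,0)$- and $(0,2)$-components, and the Bochner--Kodaira--Nakano identity $\De_{\bar{\pa}_E}=\De_{\pa_E}+[i\Theta(E),\La]$ collapses to $\De_{\bar{\pa}_E}=\De_{\pa_E}$ there. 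Combining this identification with complex conjugation on the $L^2$-harmonic forms on $\tilde X$ (which pairs $\pa_{\tilde E}$-harmonic $(0,2)$-forms with $\bar{\pa}_{\tilde E^{\ast}}$-harmonic $(2,0)$-forms through the Hermitian identification $\bar{\tilde E}\cong\tilde E^{\ast}$) together with $L^2$-Serre duality on $\tilde X$ then yields $h^{0,2}_{(2)}(X,E)=h^{2,0}_{(2)}(X,E)$, and so $\int_X c_1(X)c_1(E)=0$.

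For the second assertion, assume further $ch_2(E)=0$. Hirzebruch--Riemann--Roch on the surface gives
\[
\chi^0(X,E)=\tfrac12\!\int_X c_1(X)c_1(E)+\int_X ch_2(E)+\mathrm{rank}(E)\,\chi^0(X),
\]
with analogous formulas for $\chi^1(X,E)$ and $\chi^2(X,E)$: a quick computation of $td(X)\,ch(T^{\ast}X)=2+\frac{c_1^2(X)-5c_2(X)}{6}$ shows that its degree-$1$ component vanishes, so the coefficient of $c_1(X)c_1(E)$ in $\chi^1(X,E)$ is zero, while the coefficient of $c_1(X)c_1(E)$ flips sign between $\chi^0(X,E)$ and $\chi^2(X,E)$ (passing through the twist by $K_X$). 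Substituting $\int c_1(X)c_1(E)=0$ and $ch_2(E)=0$ into each formula gives $\chi^p(X,E)=\mathrm{rank}(E)\,\chi^p(X)$ for $p=0,1,2$, and hence $\chi_y(X,E)=\mathrm{rank}(E)\,\chi_y(X)$. The main obstacle is the Hodge-symmetry step above, which is not immediate from Serre duality alone and really depends on the surface-specific degeneration of $[i\Theta(E),\La]$ on the extremal bidegrees.
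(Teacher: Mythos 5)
Your overall route is the same as the paper's: for the first claim you reduce, via the Hirzebruch--Riemann--Roch expressions for $K_{0}(X,E)$ and $K_{1}(X,E)$ together with the identity $(-1)^{n+j}K_{j}(X,E)=(-1)^{j}\sum_{p\geq j}\binom{p}{j}h^{p,n-p}_{(2)}(X,E)$, to the symmetry $h^{2,0}_{(2)}(X,E)=h^{0,2}_{(2)}(X,E)$, which is exactly the paper's computation (the paper writes it as $-K_{1}=K_{0}$, you write it as $K_{0}+K_{1}=h^{0,2}_{(2)}-h^{2,0}_{(2)}$; these are the same line). Your second part is also correct and equivalent to the paper's: you expand $\chi^{p}(X,E)$ for $p=0,1,2$ directly and your coefficients check out ($td(X)ch(\Om^{1,0})$ has vanishing degree-one part, and the $c_{1}(X)c_{1}(E)$-coefficient is $+\tfrac12,0,-\tfrac12$ for $p=0,1,2$), whereas the paper does the equivalent computation for $K_{2}(X,E)$; both give $\chi_{y}(X,E)=rank(E)\chi_{y}(X)$ once $\int_{X}c_{1}(X)c_{1}(E)=0$ and $ch_{2}(E)=0$.

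The one point where you diverge is the justification of $h^{2,0}_{(2)}=h^{0,2}_{(2)}$, and your proposed argument there is circular. Conjugation combined with the Hermitian identification $\overline{\tilde{E}}\cong\tilde{E}^{\ast}$ carries $\mathcal{H}^{0,2}_{(2);\pa_{\tilde{E}}}(\tilde{X},\tilde{E})$ to $\mathcal{H}^{2,0}_{(2);\bar{\pa}_{\tilde{E}^{\ast}}}(\tilde{X},\tilde{E}^{\ast})$, and $L^{2}$-Serre duality on the surface then sends $\mathcal{H}^{2,0}_{(2);\bar{\pa}_{\tilde{E}^{\ast}}}(\tilde{X},\tilde{E}^{\ast})$ back to $\mathcal{H}^{0,2}_{(2);\bar{\pa}_{\tilde{E}}}(\tilde{X},\tilde{E})$; since $\De_{\pa_{\tilde{E}}}=\De_{\bar{\pa}_{\tilde{E}}}$ on the extremal bidegrees (which you correctly established), the composite only returns the tautology $h^{0,2}_{(2)}(X,E)=h^{0,2}_{(2)}(X,E)$ and never compares the $(2,0)$- and $(0,2)$-spaces for the \emph{same} bundle $E$. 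Note that such a symmetry cannot follow from duality formalities alone: in the compact untwisted setting it would assert $\dim H^{0}(X,K_{X}\otimes E)=\dim H^{0}(X,K_{X}\otimes E^{\ast})$, which is false for a generic line bundle. For what it is worth, the paper's own proof is no more solid at this exact spot: it simply asserts $\Om^{0,2}(X,E)\cong\Om^{2,0}(X,E)$ and that this isomorphism intertwines the two harmonic spaces, without constructing the map. So you have correctly isolated the crux, reproduced everything else faithfully, but the specific mechanism you propose for the crux does not close the gap.
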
 
	\begin{proof}
		For any $\a\in\Om^{2,0}(X,E)$ or $\a\in\Om^{0,2}(X,E)$, we observe that
		$$[\La,i\Theta(E)]\a=0.$$
		Therefore, 
		$$\De_{E}\a=2\De_{\bar{\pa}_{E}}\a=2\De_{\pa_{E}}\a.$$
		Hence
		$$\ker\De_{\bar{\pa}_{E}}\cap\Om^{0,2}_{(2)}(X,E)=\ker\De_{E}\cap\Om^{0,2}_{(2)}(X,E)$$
		and
		$$\ker\De_{\bar{\pa}_{E}}\cap\Om^{2,0}_{(2)}(X,E)=\ker\De_{E}\cap\Om^{2,0}_{(2)}(X,E).$$
		Noticing that $\Om^{0,2}(X,E)\cong \Om^{2,0}(X,E)$. We then have
		$$\ker\De_{\bar{\pa}_{E}}\cap\Om^{2,0}_{(2)}(X,E)\cong\ker\De_{\bar{\pa}_{E}}\cap\Om^{0,2}_{(2)}(X,E).$$
		One can see that $$h^{2,0}_{(2)}(X,E)=h^{0,2}_{(2)}(X,E).$$
		Noticing that $$\chi_{y}(X,E)|_{y=-1}=h^{0,2}_{(2)}(X,E)+h_{(2)}^{1,1}(X,E)+h^{2,0}_{(2)}(X,E).$$
		We then have
		\begin{equation*}
		\begin{split}
		-K_{1}(X,E)&=rank(E)\int_{X}c_{2}(X)-\int_{X}c_{1}(X)c_{1}(E)\\
		&=h_{(2)}^{1,1}(X,E)+2h^{2,0}_{(2)}(X,E)\\
		&=h^{0,2}_{(2)}(X,E)+h_{(2)}^{1,1}(X,E)+h^{2,0}_{(2)}(X,E)\\
		&=\chi_{y}(X,E)|_{y=-1}\\
		&=rank(E)\chi(X).\\
		\end{split}
		\end{equation*}
		Therefore, we get
		\begin{equation*}
		\int_{X}c_{1}(X)c_{1}(E)=0.
		\end{equation*}
		Noticing that
		$$td(X)=1+\frac{c_{1}(X)}{2}+\frac{c^{2}_{1}(X)+c_{2}(X)}{12},\  ch(E)=rank(E)+c_{1}(E)+\frac{c^{2}_{1}(E)-2c_{2}(E)}{2}.$$
		By the definition of $K_{2}(X,E)$, we then have
		\begin{equation*}
		\begin{split}
		K_{2}(X,E)&=\frac{1}{2}\frac{d^{2}}{dy^{2}}\chi_{y}(X,E)|_{y=-1}\\
		&=\frac{1}{2}\frac{d^{2}}{dy^{2}}\int_{X}td(X)ch(E)\prod_{i=1}^{2}(1+ye^{-\gamma_{i}})|_{y=-1}\\
		&=\frac{1}{2}\frac{d^{2}}{dy^{2}}\int_{X}td(X)ch(E)e^{-\gamma_{1}-\gamma_{2}}y^{2}|_{y=-1}\\
		&=\int_{X}td(X)ch(E)e^{-c_{1}(X)})\\
		&=\int_{X}(1+\frac{c_{1}(X)}{2}+\frac{c^{2}_{1}(X)+c_{2}(X)}{12})(1-c_{1}(X)+\frac{c^{2}_{1}(X) }{2})(rank(E)+c_{1}(E)+\frac{c^{2}_{1}(E)-2c_{2}(E)}{2})\\
		&=\int_{X}\big{(}\frac{c^{2}_{1}(E)-2c_{2}(E)}{2}-c_{1}(X)c_{1}(E)+rank(E)\frac{c^{2}_{1}(X)}{2}-rank(E)\frac{c^{2}_{1}(X)}{2}+\frac{c_{1}(X)}{2}c_{1}(E)\\
		&+rank(E)\frac{c^{2}_{1}(X)+c_{2}(X)}{12}\big{)}\\
		&=rank(E)K_{2}(X)-\langle\frac{c_{1}(X)c_{1}(E)}{2},X\rangle+\langle\frac{c^{2}_{1}(E)-2c_{2}(E)}{2},X\rangle.\\
		\end{split}
		\end{equation*}
		We also have
		\begin{equation*}
		K_{2}(X,E)=h^{2,0}_{(2)}(X,E),\ K_{2}(X)=h^{2,0}_{(2)}(X).
		\end{equation*}
		Therefore, we get
		\begin{equation*}
		\int_{X}\frac{c^{2}_{1}(E)-2c_{2}(E)}{2}=h^{2,0}_{(2)}(X,E)-rank(E)h^{2,0}_{(2)}(X).
		\end{equation*}
		If $ch_{2}(E)=0$, then 
		$$K_{2}(X,E)=rank(E)K_{2}(X).$$
		Notice that $K_{0}(X,E)=rank(E)K_{0}(X)$ and $K_{1}(X,E)=rank(E)K_{1}(X)$. Therefore, 
		$$\chi_{y}(X,E)=rank(E)\chi_{y}(X).$$
	\end{proof}
	\begin{proof}[\textbf{Proof of Theorem \ref{T2}}]
		If $C:=|[\La,i\Theta(L)]|>c(n)K$, i.e., $\lfloor\frac{c_{n}K}{C}\rfloor=0$, then 	$\chi(X)\geq 3$.\\
		If $C\leq c_{n}K$, i.e., there is a positive integer $N$ such that
		$$C(N+1)>c_{n}K\geq CN=|[\La,i\Theta(L^{\otimes N})]|,$$
		then for any $|m|\leq N$, following Corollary \ref{C3}, we get 
		$$\langle c_{1}(L^{\otimes m})c_{1}(X),X\rangle=0.$$
		Therefore,
		$$K_{1}(X,L^{\otimes m})=K_{1}(X)=-c_{2}[X].$$
		By the definition of $\chi_{y}(X,E)$,  we get
		\begin{equation*}
		\begin{split}
		\chi_{y}(X,L^{\otimes m})&=\chi^{0}(X,L^{\otimes m})+\chi^{1}(X,L^{\otimes m})y+\chi^{2}(X,L^{\otimes m})y^{2}\\
		&=K_{0}(X,L^{\otimes m})+K_{1}(X,L^{\otimes m})\cdot(y+1)+K_{2}(X,L^{\otimes m})\cdot(y+1)^{2}.\\
		\end{split}
		\end{equation*}
		Therefore, for any $|m|\leq N$, we have
		\begin{equation*}
		\begin{split}
		\chi^{0}(X,L^{\otimes m})&=K_{0}(X,L^{\otimes m})+K_{1}(X,L^{\otimes m})+K_{2}(X,L^{\otimes m})\\
		&=K_{0}(X)+K_{1}(X)+K_{2}(X)+\frac{m^{2}}{2}\int_{X}c^{2}_{1}(L)\\
		&=\chi^{0}(X)+\frac{m^{2}}{2}\int_{X}c^{2}_{1}(L)\geq 1,\\
		\end{split}
		\end{equation*}
		\begin{equation*}
		\begin{split}
		\chi^{1}(X,L^{\otimes m})&=K_{1}(X,L^{\otimes m})+2K_{2}(X,L^{\otimes m})\\
		&=K_{1}(X)+2K_{2}(X)+m^{2}\int_{X}c^{2}_{1}(L)\\
		&=\chi^{1}(X)+m^{2}\int_{X}c^{2}_{1}(L)\leq-1,\\
		\end{split}
		\end{equation*}
		\begin{equation*}
		\begin{split}
		\chi^{2}(X,L^{\otimes m})&=K_{2}(X,L^{\otimes m})=K_{2}(X)+\frac{m^{2}}{2}\int_{X}c^{2}_{1}(L)\\
		&=\chi^{2}(X)+\frac{m^{2}}{2}\int_{X}c^{2}_{1}(L)\geq 1.\\
		\end{split}
		\end{equation*}
		Following \cite[0.4.A. Theorem]{Gro}, we also have $(-1)^{p}\chi^{p}(X)\geq1$, $p=0,1,2$.\\
		When $\int_{X}c^{2}_{1}(L)>0$, for any $|m|\leq N$, we obtain that
		\begin{equation*}
		\begin{split}
		&\chi^{0}(X,L^{\otimes m})\geq 1+\frac{m^{2}}{2}\int_{X}c^{2}_{1}(L),\\
		&-\chi^{1}(X,L^{\otimes m})\geq 1,\\
		&\chi^{2}(X,L^{\otimes m})\geq 1+\frac{m^{2}}{2}\int_{X}c^{2}_{1}(L).\\
		\end{split}
		\end{equation*}
		Therefore,
		\begin{equation*}
		\begin{split}
		\chi(X)&=\chi_{y}(X,L^{\otimes m})|_{y=-1}\\
		&=\sum_{i=0}^{2}(-1)^{i}\chi^{i}(X,L^{\otimes m})\\
		&\geq 3+m^{2}\int_{X}c^{2}_{1}(L).\\
		\end{split}
		\end{equation*}
		When $\int_{X}c^{2}_{1}(L)<0$, for any $|m|\leq N$, we obtain that
		\begin{equation*}
		\begin{split}
		&\chi^{0}(X,L^{\otimes m})\geq 1,\\
		&-\chi^{1}(X,L^{\otimes m})\geq 1-m^{2}\int_{X}c^{2}_{1}(L),\\
		&\chi^{2}(X,L^{\otimes m})\geq 1.\\
		\end{split}
		\end{equation*}
		Therefore,
		\begin{equation*}
		\begin{split}
		\chi(X)&=\chi_{y}(X,L^{\otimes m})|_{y=-1}\\
		&=\sum_{i=0}^{2}(-1)^{i}\chi^{i}(X,L^{\otimes m})\\
		&\geq 3-m^{2}\int_{X}c^{2}_{1}(L).\\
		\end{split}
		\end{equation*}
		Therefore, for all cases, we get 
		\begin{equation*}
		\chi(X)\geq 3+|\int_{X}c^{2}_{1}(L)|\cdot(\lfloor\frac{c_{n}K}{C}\rfloor)^{2}.
		\end{equation*}
	\end{proof}
	
	\section{Eigenvalue and Euler characteristic}
	\subsection{Hilbert polynomial of line bundle }
	The Hilbert polynomial of polarized manifold $(X,L)$, i.e., $L$ is an ample line bundle on  a compact K\"{a}hler manifold $X$, is defined as the functional $P_{(X,L)}(m):=\chi(X,L^{\otimes L})$. Indeed, $P_{(X,L)}(m)$ is a polynomial in $n$. Following Kodaira vanishing theorem (see \cite[Proposition 5.27]{Huy}), we also have $P_{(X,L)}(m)=h^{0}(X,L^{\otimes m})$ for $m\gg 1$. Notice that
	\begin{equation*}
	\begin{split}
	[td(X)ch(\Om^{p,0}(TX))ch(L^{\otimes m})]_{2n}
	&=\sum_{i=0}^{n}[td(X)ch(\Om^{p,0}(TX))]_{2n-2i}[ch^{m}(L)]_{2i},\\
	&=\sum_{i=0}^{n}[td(X)ch(\Om^{p,0}(TX))]_{2n-2i}\frac{(mc_{1}(L))^{i}}{i!},\\
	\end{split}
	\end{equation*}
	where $[td(X)ch(\Om^{p,0}(TX))]_{2n-2i}$ is the part of $2n-2i$-form of $td(X)ch(\Om^{p,0}(TX)).$
	We denote 	$$a_{i}:=\int_{X}[td(X)ch(\Om^{p,0}(TX))]_{2n-2i}\wedge\frac{c^{i}_{1}(L)}{i!}.$$
	Therefore, 
	\begin{equation*}
	P^{(p)}_{n}(m,L):=\sum_{i=0}^{n}a_{i}m^{i}=\int_{X}td(X)ch(\Om^{p,0}(TX))ch(L^{\otimes m}).
	\end{equation*}
	$P^{(p)}_{n}(m,L)$ is a polynomial of degree $n$ if only if $\int_{X}c^{n}_{1}(L)\neq 0$. We can introduce the following definition.
	\begin{definition}
		Let $L$ be a holomorphic line bundle on a compact K\"{a}hler manifold $X$. We call $$P^{(p)}_{n}(m,L):=\chi^{p}(X,L^{\otimes m})$$ 
		the $p$-Hilbert polynomial of line bundle $L$.
	\end{definition}
	\begin{lemma}\label{L4.2}
		Let $P_{n}(m)$ be a numerical polynomial of degree $n\geq1$. Suppose that $P_{n}(m)$ is not constant. If $N=\{i_{1},\cdots,i_{2nL+1}\}$, where the integers $\{i_{j}\}_{j=1,\cdots,2nL+1}$ that are not equal to each other, then there exists integer $\tilde{i}$ such that 
		$$|P_{n}(\tilde{i})|\geq L.$$ 
	\end{lemma}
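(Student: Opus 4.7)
The plan is to argue by contradiction via a pigeonhole count. Suppose that $|P_n(i_j)| \leq L-1$ for every $j = 1,\ldots,2nL+1$. Since $P_n$ is a numerical polynomial, each value $P_n(i_j)$ is an integer, and under the standing assumption these values all lie in the set $S := \{-(L-1),-(L-1)+1,\ldots,L-1\}$, which has cardinality $2L-1$.

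Next I would apply the pigeonhole principle. The $2nL+1$ distinct integers $i_1,\ldots,i_{2nL+1}$ are distributed into at most $2L-1$ fibers according to which element of $S$ their $P_n$-image equals. Since
\begin{equation*}
\frac{2nL+1}{2L-1} = n + \frac{n+1}{2L-1} > n,
\end{equation*}
some value $c \in S$ must be attained at least $n+1$ times, i.e., there exist $n+1$ distinct integers among the $i_j$'s at which $P_n$ equals $c$.

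Finally I would use the fundamental theorem of algebra: the polynomial $P_n(m) - c$ has degree $n$, so it can have at most $n$ distinct roots unless it is identically zero. Since it vanishes at $n+1$ distinct points, it must be the zero polynomial, so $P_n \equiv c$ is constant. This contradicts the hypothesis that $P_n$ is not constant, and hence there must exist some $\tilde{i} \in \{i_1,\ldots,i_{2nL+1}\}$ with $|P_n(\tilde{i})| \geq L$.

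There is no real obstacle here; the proof is a one-line counting argument. The only thing to be careful about is the precise threshold $2nL+1$: one must verify that this count is exactly enough to force $n+1$ collisions in a set of $2L-1$ possible values, which is the computation displayed above. The integer-valuedness of $P_n$ at integer points (the defining feature of a numerical polynomial) is what makes $S$ finite and hence makes pigeonhole applicable in the first place.
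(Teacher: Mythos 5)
Your proof is correct and uses essentially the same counting argument as the paper: both rest on the fact that a polynomial of degree $n\geq 1$ attains each of the $2L-1$ integer values in $\{-(L-1),\ldots,L-1\}$ at most $n$ times, and $n(2L-1)<2nL+1$. The paper states this directly as a bound on the number of integer solutions of $|P_{n}(x)|\leq L-1$, while you run it as a pigeonhole contradiction, but the content is identical.
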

	\begin{proof}
		Since $P_{n}(m)$ is not constant, there is an coefficient $a_{i}\neq 0$. Notice that for any integer $i$, the equation $P_{n}(x)=i$ has at most $n$ solutions. Therefore, $0\leq |P_{n}(\tilde{i})|\leq L-1$ has at most $2nL$ solutions. Hence there exists a integer $\tilde{i}\in N$ such that $|P_{n}(\tilde{i})|\geq L$.
	\end{proof}
	Assume that $X$ has a K\"{a}hler metric $\w$. Let
	$$\gamma_{1}(x)\leq\cdots\gamma_{n}(x)$$
	be the eigenvalues of $i\Theta(L)_{x}$ with respect to $\w_{x}$ at each point $x\in X$, and let 
	$$i\Theta(L)=i\sum_{1\leq j\leq n}\gamma_{i}\xi_{j}\wedge\bar{\xi}_{j},\ \xi_{j}\in T^{\ast}_{x}X$$
	be a diagonalization of $i\Theta(L)_{x}$. We then have
	\begin{equation*}
	\begin{split}
	\langle [i\Theta(L),\La]u,u\rangle=\sum_{J,K}(\sum_{j\in J}\gamma_{j}+\sum_{j\in K}\gamma_{j}-\sum_{1\leq j\leq n}\gamma_{j})|u_{J,K}|^{2}.
	\end{split}
	\end{equation*}
	\begin{lemma}
		Let $L$ be a line on a compact K\"{a}hler manifold $(X,\w)$. Then $C=0$ if only if $\Theta(L)=0$. Furthermore, if $L$ is not flat, then there is an uniform positive constant $\varepsilon\in(0,1)$ such that 
		$$C>\varepsilon.$$
	\end{lemma}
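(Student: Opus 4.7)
The plan is to exploit the pointwise diagonalization of $[i\Theta(L),\Lambda]$ displayed immediately before the lemma statement. Since $L$ is a line bundle, $i\Theta(L)$ is a real $(1,1)$-form and $[i\Theta(L),\Lambda]$ is a zeroth-order endomorphism of $\Lambda^{p,q}T^{\ast}X$ which is diagonal, with real eigenvalues, in the local orthonormal basis $\{\xi_{J}\wedge\bar{\xi}_{K}\}$; the eigenvalue on $\xi_{J}\wedge\bar{\xi}_{K}$ is
$$\mu_{J,K}(x)=\sum_{j\in J}\gamma_{j}(x)+\sum_{j\in K}\gamma_{j}(x)-\sum_{j=1}^{n}\gamma_{j}(x),$$
where $\gamma_{1}(x),\dots,\gamma_{n}(x)$ are the eigenvalues of $i\Theta(L)_{x}$ with respect to $\omega_{x}$. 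Thus the quantity $C_{p,q}$ is nothing but the essential supremum over $x\in X$ of the pointwise operator norm of this fibrewise Hermitian multiplier restricted to $\Lambda^{p,q}T^{\ast}X$.

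One direction of the equivalence is immediate: if $\Theta(L)=0$ then the bundle endomorphism $[i\Theta(L),\Lambda]$ is identically zero, hence $C=0$. For the converse, assume $C=0$, so that $C_{p,q}=0$ for every bidegree $(p,q)$. Since the $L^{2}$-pairing $\int_{X}\sum_{J,K}\mu_{J,K}(x)|\alpha_{J,K}(x)|^{2}\,dV$ vanishes for all $\alpha\in\Omega^{p,q}(X)$, localizing with bump-supported test forms concentrated near an arbitrary point $x\in X$ forces the pointwise identity $\mu_{J,K}(x)=0$ for every $x$ and every pair $J,K$ of sizes $p,q$. Specializing to $(p,q)=(0,0)$ gives $\sum_{j=1}^{n}\gamma_{j}(x)=0$; feeding this back into the case $(p,q)=(1,0)$ with $J=\{j_{0}\}$ then yields $\gamma_{j_{0}}(x)=0$ for each $j_{0}=1,\dots,n$. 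Hence $i\Theta(L)_{x}=0$ everywhere, i.e.\ $\Theta(L)=0$, completing the ``only if'' direction.

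For the second assertion, if $L$ is not flat then $\Theta(L)\not\equiv 0$, and by the equivalence just proved $C>0$. Setting $\varepsilon:=\tfrac{1}{2}\min\{C,1\}\in(0,1)$ yields the required strict bound $C>\varepsilon$.

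The only delicate step is the passage from ``$\langle[i\Theta(L),\Lambda]\alpha,\alpha\rangle_{L^{2}}=0$ for all $\alpha$'' to the pointwise vanishing of the Hermitian multiplier on each fibre, but this is standard for zeroth-order self-adjoint operators: one uses a partition of unity together with the diagonal form of $[i\Theta(L),\Lambda]$ in the $\xi_{J}\wedge\bar{\xi}_{K}$ basis to isolate each coefficient $\mu_{J,K}$. I do not anticipate any substantive obstacle beyond this routine bookkeeping.
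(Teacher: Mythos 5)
Your proof of the equivalence $C=0\Leftrightarrow\Theta(L)=0$ is essentially the paper's argument: both reduce to the pointwise eigenvalue formula $\mu_{J,K}=\sum_{j\in J}\gamma_{j}+\sum_{j\in K}\gamma_{j}-\sum_{j}\gamma_{j}$ and choose index sets that isolate each $\gamma_{j}$ (the paper takes $J=\{1,\dots,n\}$, $K=\{j\}$; you use bidegrees $(0,0)$ and $(1,0)$ — same idea). Your extra care in passing from the $L^{2}$ Rayleigh quotient to pointwise vanishing via localization is fine and if anything more explicit than the paper.

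The second assertion is where your argument has a genuine gap. Setting $\varepsilon:=\tfrac{1}{2}\min\{C,1\}$ produces a constant that depends on the particular line bundle $L$, so you have only shown the tautology ``$C>0$ implies $C>\varepsilon$ for some $\varepsilon\in(0,C)$.'' The word \emph{uniform} in the statement means a single $\varepsilon$ valid for \emph{every} non-flat line bundle on $X$, and that is the entire content of the claim; it is also how the bound is meant to be used (a quantitative gap below which curvature forces flatness). The paper proves uniformity by contradiction: if no such $\varepsilon$ existed, there would be a sequence of non-flat line bundles $L_{i}$ with $C_{i}\to 0$; the pointwise bound $|\Theta(L_{i})|\leq\sum_{j}|\gamma_{j}|\leq nC_{i}$ then puts the curvature below the threshold of Uhlenbeck's theorem \cite{Uhl}, which produces a flat connection on $L_{i}$ for $i$ large — contradicting non-flatness. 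The essential ingredient your proposal is missing is precisely this small-curvature-implies-flat input (or some substitute, e.g.\ integrality of $c_{1}(L)$ forcing $c_{1}(L)=0$ when the curvature is uniformly small, plus an argument that a topologically trivial bundle with small curvature is flat); without it the uniformity cannot be obtained.
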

	\begin{proof}
		If $C=0$, then for any $J,K$, we have
		$$\sum_{J,K}(\sum_{j\in J}\gamma_{j}+\sum_{j\in K}\gamma_{j}-\sum_{1\leq j\leq n}\gamma_{j})=0.$$
		We take $J=\{1,\cdots,n\}$ and $K=j$, then we get
		$$\gamma_{j}=0, \forall\ 0\leq j\leq n,$$
		i.e., $\Theta(L)=0$.\\
		We suppose that the constant $\varepsilon$ does not exist. We may then choose a sequence non-flat bundles $\{L_{i}\}_{i\in\N}$ such that $C_{i}\rightarrow0$ as $i\rightarrow\infty$. We have
		$$|\Theta(L_{i})|\leq\sum_{j=1}^{n}|\gamma_{j}|\leq nC_{i}.$$
		If $i$ large enough, then there exists a flat connection on $L_{i}$ (see \cite{Uhl}), contradicting our initial assumption regarding the sequence $\{L_{i}\}_{i\in N}$.
	\end{proof}
	
	\begin{proof}[\textbf{Proof of Theorem \ref{T4}}]
		Since $\chi^{p}(X,L^{\otimes m})$ is not constant for some $p\in[0,n]$, we obtain that $L$ is not flat, i.e.,  $$C:=|[i\Theta(L),\La]|>0.$$
		For any $K>0$,  there is an integer $N$ such that
		$$C(nN+1)>c_{n}K\geq C(nN)=|[i\Theta(L^{\otimes (nN)}),\La]|.$$ 
		Notice that $P^{(p)}_{n}(m,L)$ is integer for any $|m|\leq nN$. Following Lemma \ref{L4.2}, then there is a integer $\tilde{m}=\tilde{m}(p)$ such that
		$$|P^{(p)}_{n}(\tilde{m},L)-\chi^{p}(X)|\geq N.$$
		We then have either
		$$(-1)^{n-p+1}(P^{(p)}_{n}(\tilde{m},L)-\chi^{p}(X))\geq N $$
		or
		$$(-1)^{n-p+1}(P^{(p)}_{n}(\tilde{m},L)-\chi^{p}(X))\leq -N.$$
		If $(-1)^{n-p+1}(P^{(p)}_{n}(\tilde{m},L)-\chi^{p}(X))\leq -N$, then
		$$(-1)^{n-p}\chi^{p}(X,L^{\otimes \tilde{m}})=(-1)^{n-p}\chi^{p}(X)+(-1)^{n-p}(P^{(p)}_{n}(\tilde{m},L)-\chi^{p}(X))\geq1+N.$$
		Following Theorem \ref{T1}, for any $|m|\leq nN$, we get
		$$(-1)^{n-p}\chi^{p}(X,L^{\otimes m})=(-1)^{n-p}\chi^{p}(X)+(-1)^{n-p}(P^{(p)}_{n}(\tilde{m},L)-\chi^{p}(X))\geq1.$$
		If $(-1)^{n-p+1}(P^{(p)}_{n}(\tilde{m},L)-\chi^{p}(X))\geq N$, then there is a integer $\tilde{m}$ such that
		$$(-1)^{n-p}\chi^{p}(X)=(-1)^{n-p}\chi^{p}(X,L^{\otimes m})+(-1)^{n-p+1}(P^{(p)}_{n}(\tilde{m},L)-\chi^{p}(X))\geq1+N.$$
		Following the vanishing theorem and the Atiyah's $L^{2}$-index theorem, we obtain that  either  
		$$(-1)^{n-p}\chi^{p}(X)=(-1)^{n-p}\chi^{p}_{(2)}(X)=h^{n-p,p}_{(2)}(X)\geq 1+N$$
		or
		$$(-1)^{n-p}\chi^{p}(X,L^{\otimes\tilde{m}})=(-1)^{n-p}\chi^{p}_{(2)}(X,L^{\otimes\tilde{m}})=h^{n-p,p}_{(2)}(X,L^{\otimes\tilde{m}})\geq 1+N.$$
		Therefore, we get
		$$(-1)^{n}\chi(X)=(-1)^{n}\chi_{(2)}(X)=\sum_{p=0}^{n}h^{n-p,p}_{(2)}(X)\geq (n+1)+N$$
		or
		$$(-1)^{n}\chi(X,L^{\otimes\tilde{m}})=(-1)^{n}\chi^{p}_{(2)}(X,L^{\otimes\tilde{m}})=\sum_{p=0}^{n}h^{n-p,p}_{(2)}(X,L^{\otimes\tilde{m}})\geq (n+1)+N.$$
		We observe that for any $m\in\mathbb{Z}$, 
		$$\chi_{y}(X,L^{\otimes m})|_{y=-1}=\chi(X).$$
		Hence, the Euler number of $X$ must satisfy
		$$(-1)^{n}\chi(X)=(-1)^{n}\chi(X,L^{\otimes\tilde{m}})\geq (n+1)+N.$$
	\end{proof}
	
	\subsection{The roots of Hilbert polynomial}
	We denote by $$Z^{\pm}=\{m\in\mathbb{R^{\pm}}: P^{(p)}_{n}(m,L)=\chi^{p}(X)\}$$
	the set of positive (resp. negative) roots of $(P^{(p)}_{n}(m,L)-\chi^{p}(X))$.
	We denote  
	$$C^{\pm}:=\max_{m\in Z^{\pm}}|m|,$$
	(when $Z^{\pm}=\emptyset$, we denote $C^{\pm}=0$). It's easy to see that $C^{\pm}$ depends on $K$ and $c_{1}(L)$.
	\begin{lemma}(\cite[Lemma 16.3]{Dem96})\label{L4.4}
		Let $P_{n}(m)$ be a numerical polynomial of degree $n\geq1$ and with leading coefficient $\frac{1}{n!}a_{n}\in\mathbb{Z}$, $a_{n}>0$. We assume that $P_{n}(m)\geq 0$ for all $m\geq m_{0}$. Then for any $k\in\mathbb{N}$, there exists $m\in[m_{0},m_{0}+kn]$ such that 
		$$P_{n}(m)\geq \frac{a_{n}k^{n}}{2^{n-1}}.$$	
	\end{lemma}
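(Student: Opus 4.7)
The plan is to invoke the calculus of iterated finite differences. First, I would introduce the $n$-th forward difference of $P_n$ with step $k$, namely
\[
\Delta_k^n P_n(m_0) := \sum_{j=0}^{n} (-1)^{n-j} \binom{n}{j}\, P_n(m_0 + jk),
\]
and recall the classical identity $\Delta_k^n P_n(m_0) = a_n k^n$, valid for any polynomial of degree exactly $n$ whose leading coefficient is $a_n/n!$. This follows by linearity, since $\Delta_k^n$ annihilates monomials of degree less than $n$, together with a direct binomial-theorem computation on the top monomial $x^n$ (which gives $n!\,k^n$).

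Second, I would exploit the positivity hypothesis $P_n(m_0+jk) \geq 0$, valid for every $j\geq 0$ because $m_0+jk \geq m_0$. Splitting the alternating sum according to the parity of $n-j$ and discarding the (nonnegative) odd-parity block yields
\[
a_n k^n \;\leq\; \sum_{\substack{0\leq j\leq n\\ n-j\text{ even}}} \binom{n}{j}\, P_n(m_0+jk) \;\leq\; 2^{n-1}\cdot\max_{0\leq j\leq n} P_n(m_0+jk),
\]
using the standard identity $\sum_{n-j\text{ even}} \binom{n}{j} = 2^{n-1}$ for $n\geq 1$. Choosing $m = m_0 + jk$ to be the value of the index realizing the maximum exhibits a point in $[m_0, m_0+nk]$ at which $P_n(m) \geq a_n k^n / 2^{n-1}$, which is the claimed bound.

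I do not anticipate any serious obstacle, since the conclusion is essentially a purely combinatorial consequence of the constancy of the $n$-th finite difference of a degree-$n$ polynomial. The only step requiring a brief calculation is the identity $\Delta_k^n P_n(m_0) = a_n k^n$, which is a standard exercise (one may either induct on $n$ or invoke the fact that $\Delta_k$ sends a degree-$d$ polynomial with leading coefficient $c$ to a degree-$(d-1)$ polynomial with leading coefficient $dck$). Notice that the positivity hypothesis enters only where we drop the odd-parity terms, and the integrality built into the notion of a numerical polynomial plays no role in the stated estimate.
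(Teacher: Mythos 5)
Your proof is correct and follows essentially the same route as the paper's: both rest on the constancy of the $n$-th finite difference of a degree-$n$ polynomial ($=a_nk^n$) together with the parity splitting of the alternating sum and the identity $\sum_{n-j\ \mathrm{even}}\binom{n}{j}=2^{n-1}$. The only cosmetic difference is that you take the step-$k$ difference directly, whereas the paper first treats $k=1$ and then substitutes $Q_n(m)=P_n(km-(k-1)m_0)$, which amounts to the same computation.
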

	\begin{proof}
		By virtue of Newton's formula for the iterated differentials $\De P_{n}(m)=P_{n}(m+1)-P_{n}(m)$, we obtain 
		$$\De^{n}P_{n}(m)=\sum_{1\leq j\leq n}(-1)^{j}\tbinom{n}{j}P_{n}(m+N-j)=a_{n}.$$
		Consequently, if $j\in\{0,2,4,2\lfloor n/2\rfloor\}\subset[0,n]$ is the even integer realizing the maximum of $P(m_{0}+n-j)$ on this set, we obtain
		$$2^{n-1}P(m_{0}+n-j)\geq(\tbinom{n}{0}+\tbinom{n}{2}+\cdots)P(m_{0}+n-j)\geq a_{n},$$
		whereby we obtain the existence of an integer $m\in[m_{0},m_{0}+n]$ with $P_{n}(m)\geq \frac{a_{n}}{2^{n-1}}$. The result is therefore prove for $k=1$. In general case, we apply this particular result to the polynomial $Q_{n}(m)=P_{n}(km-(k-1)m_{0})$, for which the leading coefficient is $\frac{1}{n!}a_{n}k^{n}$. 
	\end{proof}
	
	\begin{proposition}\label{C1}
		Let $(X,\w)$ be a  compact K\"{a}hler manifold with sectional curvature bounded from above by a negative constant, i.e.,
		$$sec\leq -K,$$ 
		for some $K>0$. Suppose that there is a holomorphic line bundle $L$ on $X$ such that $a_{n}:=\int_{X}c^{n}_{1}(L)\neq 0$. 
		If $c_{n}K\geq CC^{\pm}$, then there exists a integer $\tilde{m}\in[-\lfloor\frac{c_{n}K-CC^{\pm}}{nC}\rfloor, \lfloor\frac{c_{n}K-CC^{\pm}}{nC}\rfloor]$ such that either
		\begin{equation*}
		(-1)^{n-p}\chi^{p}(X,L^{\otimes{\tilde{m}}})\geq 2|a_{n}|(|\lfloor\frac{c_{n}K-CC^{\pm}}{2Cn}\rfloor|)^{n}+1
		\end{equation*}
		or
		\begin{equation*}
		(-1)^{n-p}\chi^{p}(X)\geq 2|a_{n}|(|\lfloor\frac{c_{n}K-CC^{\pm}}{2Cn}\rfloor|)^{n}+1.
		\end{equation*}
	\end{proposition}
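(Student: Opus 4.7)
The plan is to combine Demailly's growth Lemma \ref{L4.4} with the sign-switching argument used in the proof of Theorem \ref{T4}, applied to the polynomial
\begin{equation*}
\Phi(m) := P^{(p)}_n(m,L) - \chi^p(X).
\end{equation*}
The hypothesis $a_n = \int_X c_1^n(L) \neq 0$ forces $\Phi$ to have degree exactly $n$, with leading coefficient equal to $\binom{n}{p}a_n/n!$, hence a positive rational multiple of $|a_n|$. By the very definition of $C^{\pm}$, $\Phi$ has no real roots outside $[-C^-,C^+]$; therefore on $(C^+,\infty)$ it has constant sign $\sigma^+\in\{\pm 1\}$, and on $(-\infty,-C^-)$ constant sign $\sigma^-$, each determined by the leading coefficient.

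First I would take $m_0 := \lceil C^+\rceil$ so that $\sigma^+\Phi(m)\geq 0$ for every integer $m\geq m_0$, and apply Lemma \ref{L4.4} to the polynomial $\sigma^+\Phi$. The key choice of parameter is the even integer $k := 2\bigl\lfloor\tfrac{c_nK - CC^+}{2nC}\bigr\rfloor$, which is the largest such $k$ for which the interval $[m_0,\,m_0+kn]$ still lies within $[-c_nK/C,\,c_nK/C]$; the latter range is exactly where Theorem \ref{T1} can be invoked on the twist $L^{\otimes m}$. Lemma \ref{L4.4} then supplies an integer $\tilde m\in[m_0,m_0+kn]$ satisfying
\begin{equation*}
\sigma^+\Phi(\tilde m)\;\geq\;\frac{|a_n|\,k^n}{2^{n-1}}\;=\;2|a_n|\Bigl\lfloor\tfrac{c_nK - CC^+}{2nC}\Bigr\rfloor^{n}.
\end{equation*}
The mirror construction with $m_0 = -\lceil C^-\rceil$ on the negative side yields the analogue with $C^-$ in place of $C^+$, which is what justifies the unified notation $C^\pm$ in the statement.

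The second step is a sign-case split parallel to that in the proof of Theorem \ref{T4}. Setting $\varepsilon := (-1)^{n-p}$ and noting that $|\tilde m|C\leq c_nK$ by construction, Theorem \ref{T1} gives both $\varepsilon\chi^p(X)\geq 1$ and $\varepsilon\chi^p(X,L^{\otimes\tilde m})\geq 1$. Combining these with the identity
\begin{equation*}
\varepsilon\chi^p(X,L^{\otimes\tilde m}) \;=\; \varepsilon\chi^p(X) + \varepsilon\Phi(\tilde m)
\end{equation*}
and the large lower bound on $|\Phi(\tilde m)|$: if $\sigma^+ = \varepsilon$ the excess is absorbed into the twisted Euler characteristic and yields the first alternative; if $\sigma^+ = -\varepsilon$ the same identity rearranges to the second alternative.

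The main obstacle I anticipate is managing the non-integrality of $C^\pm$: one must choose $m_0 = \lceil C^+\rceil$ rather than $C^+$ itself and pick $k$ even, so that the integer $\tilde m$ actually lands inside the window $\bigl[-\lfloor\tfrac{c_nK - CC^\pm}{nC}\rfloor,\lfloor\tfrac{c_nK - CC^\pm}{nC}\rfloor\bigr]$ required by the statement while the Demailly output takes the clean shape $2|a_n|\lfloor\cdot\rfloor^n$. A secondary bookkeeping point is verifying that the extra factor $\binom{n}{p}/n!$ in the true leading coefficient of $\Phi$ only strengthens the bound, so that the stated constant $|a_n|$ is permissible.
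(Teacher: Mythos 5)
Your proposal follows essentially the same route as the paper: both apply Demailly's Lemma \ref{L4.4} to $\Phi(m)=P^{(p)}_{n}(m,L)-\chi^{p}(X)$ on the region beyond its largest real root (where its sign is constant and determined by $a_{n}$), with the growth parameter tied to $\lfloor\frac{c_{n}K-CC^{\pm}}{nC}\rfloor$, and then combine the resulting large value of $|\Phi(\tilde m)|$ with the baseline bounds $(-1)^{n-p}\chi^{p}(X)\geq1$ and $(-1)^{n-p}\chi^{p}(X,L^{\otimes\tilde m})\geq1$ from Theorem \ref{T1} via the identity $\chi^{p}(X,L^{\otimes\tilde m})=\chi^{p}(X)+\Phi(\tilde m)$ and a sign case split. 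Your choices $m_{0}=\lceil C^{+}\rceil$ and $k=2\lfloor\cdot\rfloor$ versus the paper's $N^{\pm}=\lfloor C^{\pm}\rfloor$ and the estimate $N^{n}/2^{n-1}\geq 2\lfloor N/2\rfloor^{n}$ are only bookkeeping variants of the same argument.
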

	\begin{proof}
		For any $K>0$,  there are integers $N^{\pm}$, $N$  such that
		$$Cn(N+1)> c_{n}K-CC^{\pm}\geq C(nN).$$
		and
		$$ N^{\pm}+1> C^{\pm}\geq N^{\pm}.$$
		Since  $c_{n}K\geq CC^{\pm}$, $N\geq0$. Therefore, we have
		$$c_{n}K\geq C(nN+N^{\pm})\geq|[i\Theta(L^{\otimes (nN+N^{\pm})}),\La]|.$$
		Notice that $sign(a_{n})(P^{(p)}_{n}(m,L)-\chi^{p}(X))$ is positve integer for any $m>N^{+}$ and $sign(a_{n})(P^{(p)}_{n}(m,L)-\chi^{p}(X))$ is negative (resp. positive) integer for any $m<-N^{-}$ when $n$ is odd (resp. even). Following the way in Lemma \ref{L4.4}, then there is a integer $N^{+}\leq \tilde{m}\leq nN+N^{+}$ (resp. $-nN-N^{-}\leq\tilde{m}\leq -N^{-}$) such that
		$$sign(a_{n})(P^{(p)}_{n}(\tilde{m},L)-\chi^{p}(X))\geq |a_{n}|N^{n}/2^{n-1}\ (resp.\ (-1)^{n}sign(a_{n})P_{n}(\tilde{m})\geq|a_{n}|N^{n}/2^{n-1}).$$
		Following Theorem \ref{T1}, for any $|m|\leq (nL+N^{\pm})$, we get
		$$(-1)^{n-p}\chi^{p}(X,L^{\otimes m})=(-1)^{n-p}\chi^{p}(X)+(-1)^{n-p}(P^{(p)}_{n}(m,L)-\chi^{p}(X))\geq1$$
		and
		$$(-1)^{n-p}\chi^{p}(X)=(-1)^{n-p}\chi^{p }(X,L^{\otimes m})+(-1)^{n-p+1}(P^{(p)}_{n}(m,L)-\chi^{p}(X))\geq1.$$	
		We then have 
		\begin{equation*}
		\begin{split}
		\max\{(-1)^{n-p}\chi^{p}(X),(-1)^{n-p}\chi^{p }(X,L^{\otimes\tilde{m}}) \}
		&\geq 1+|P^{(p)}_{n}(\tilde{m},L)-\chi^{p}(X)|\\
		&\geq1+|a_{n}|N^{n}/2^{n-1}\\
		&\geq 2|a_{n}|(|\lfloor\frac{c_{n}K-CC^{\pm}}{2Cn}\rfloor|)^{n}+1.\\
		\end{split}
		\end{equation*}
	\end{proof}
	We denote by $$Z_{p}:=\{m\in\mathbb{R}: P^{(p)}_{n}(m,L)=\chi^{p}(X)\}$$
	the set of real roots of polynomial $P^{(p)}_{n}(m,L)-\chi^{p}(X)$. We denote  
	$$m_{p}(L)=\max_{m\in Z_{p}}|m|,$$
	(when $Z=\emptyset$, we denote $m_{p}(L)=0$). It's also easy to see $m_{p}(L)$ depends on $K$ and $c_{1}(L)$.
	\begin{proof}[\textbf{Proof of Theorem \ref{T5}}]
		There exists a integer $N$ such that
		$$Cn(N+1)>c_{n}K-Cm_{p}(L)\geq CnN.$$
		If $N\leq 0$, then $(-1)^{n-p}\chi^{p}(X)\geq1$.\\
		If $N>0$, we then have
		$$c_{n}K\geq C(nN+m_{p}(L))\geq|[i\Theta(L^{\otimes (nN+\lfloor m_{p}(L)\rfloor)}),\La]|.$$
		Following Theorem \ref{T1}, for any $|m|\leq (nN+\lfloor m_{p}(L)\rfloor)$, we get
		$$(-1)^{n-p}\chi^{p}(X,L^{\otimes m})\geq1,$$
		i.e.,
		$$(-1)^{n-p}\chi^{p}(X)\geq(-1)^{n-p+1}(P^{(p)}_{n}(m,L)-\chi^{p}(X))+1.$$
		When $n$ is odd, we get $sign(a_{n})(P^{(p)}_{n}(m,L)-\chi^{p}(X))$ is positve integer for any $m>m_{p}(L)$ and $sign(a_{n})(P^{(p)}_{n}(m,L)-\chi^{p}(X))$ is negative integer for any $m<-m_{p}(L)$. Therefore, following Lemma \ref{L4.4}, we get
		\begin{equation*}
		\begin{split}
		(-1)^{n-p}\chi^{p}(X)&\geq\max_{nN+\lfloor m_{p}(L)\rfloor)\geq|m|>m_{p}(L)}(-1)^{n-p+1}(P^{(p)}_{n}(m,L)-\chi^{p}(X))+1\\
		&\geq 1+|a_{n}|N^{n}/2^{n-1}.
		\end{split}
		\end{equation*}
		When $n$ is even, we get $sign(a_{n})(P^{(p)}_{n}(m,L)-\chi^{p}(X))$ is positve integer for any $|m|>m_{p}(L)$. Following Proposition \ref{C1}, there exists a integer $\tilde{m}\in[-nN-\lfloor m_{p}(L)\rfloor,nN+\lfloor m_{p}(L)\rfloor]$ such that either
		\begin{equation*}
		(-1)^{n-p}\chi^{p}(X,L^{\otimes{\tilde{m}}})\geq1+|a_{n}|N^{n}/2^{n-1}
		\end{equation*}
		or
		\begin{equation*}
		(-1)^{n-p}\chi^{p}(X)\geq 1+|a_{n}|N^{n}/2^{n-1}.
		\end{equation*}
		Therefore, for all cases, we get  
		$$(-1)^{n}\chi(X)\geq\max\{n+1,n+1+2|a_{n}|sign(\lfloor{c_{n}K-Cm_{p}(L)}\rfloor)(|\lfloor\frac{c_{n}K-Cm_{p}(L)}{2Cn}\rfloor|)^{n} \}.$$ 	
	\end{proof}
	
	\subsection*{Acknowledgements}
	The author thanks Zhongjie Lu, Jijian Song, Ruiran Sun for their useful comments and suggestions. This work was supported in part by NSF of China (11801539) and the Fundamental Research Funds of the Central Universities (WK3470000019), the USTC Research Funds of the Double First-Class Initiative (YD3470002002).  
	
	\bigskip
	\footnotesize

\end{document}